\definecolor{deepred}{rgb}{0.5,0,0}
\definecolor{deepblue}{rgb}{0,0,0.5}
\definecolor{deepgreen}{rgb}{0,0.5,0}
\newcommand{\Int}{\mathbb{Z}}
\newcommand{\Real}{\mathbb{R}}
\newcommand{\Cin}[1]{\text{C}^\infty(#1)}
\newcommand{\Tan}{\text{T}}
\newcommand{\Cot }{\text{T}^*}
\newcommand{\Sec}[1]{\Gamma(#1)}
\newcommand{\Der}{\text{D}}
\newcommand{\Jet}{\text{J}}
\newcommand{\Proj}{\text{pr}}
\newcommand{\Id}{\text{id}}
\newcommand{\Dim}[1]{\text{Dim}(#1)}
\newcommand{\Dr}[1]{\text{Der}(#1)}
\newcommand{\Ker}[1]{\text{ker}(#1)}
\newcommand{\LGrph}[1]{\text{Lgrph}(#1)}
\newcommand{\Man}{\textsf{Man}}
\newcommand{\Ring}{\textsf{Ring}}
\newcommand{\Vect}{\textsf{Vect}}
\newcommand{\LVect}{\textsf{LVect}}
\newcommand{\Line}{\textsf{Line}}
\newcommand{\Jac}{\textsf{Jac}}
\newcommand{\Acts}{\mathbin{\rotatebox[origin=c]{-90}{$\circlearrowright$}}}
\newcommand{\dtimes}{\mathbin{\rotatebox[origin=c]{90}{$\ltimes$}}}
\newcommand{\utimes}{\mathbin{\rotatebox[origin=c]{-90}{$\ltimes$}}}
\newtheorem{prop}{Proposition}
\numberwithin{prop}{subsection}
\newtheorem{thm}{Theorem}
\numberwithin{thm}{subsection}
\begin{document}

\title{\textbf{Poly-Jacobi Manifolds}:\\ the Dimensioned Approach to Jacobi Geometry}

\author{Carlos Zapata-Carratala}
\date{}

\maketitle

\sloppy

\begin{abstract}
    The standard formulation of Jacobi manifolds in terms of differential operators on line bundles, although effective at capturing most of the relevant geometric features, lacks a clear algebraic interpretation similar to how Poisson algebras are understood to be the algebraic counterpart of Poisson manifolds. We propose a formalism, based on the dimensioned algebra technology recently developed by the author, to capture the algebraic counterparts of Jacobi manifolds as dimensioned Poisson algebras. Particularly, we give a generalisation of the functor $\text{C}^\infty:\textsf{Smooth}\to \textsf{Ring}$ for line bundles and Jacobi manifolds, we show that coisotropic reduction of Jacobi manifolds provides an example of algebraic reduction of dimensioned Poisson algebras and we discuss the relation between products of Jacobi manifolds and the tensor products of their associated dimensioned Poisson algebras. These results motivate the definition of the category of poly-line bundles, defined as collections of line bundles over the same base manifold, and the corresponding generalisation of Jacobi structures into this context, the so-called poly-Jacobi manifolds. We present some preliminary results about these generalisations and discuss potential future lines of research. An interesting outcome of this research is a somewhat surprising connection between geometric mechanics and dimensional analysis. This is suggested by the fact that if one assumes a phase space to be a poly-Jacobi manifold, the observables -- which are the dimensioned analogue of real-valued functions -- carry a natural partial addition and total multiplication structure identical to the usual algebra of physical quantities in dimensional analysis.
\end{abstract}

\newpage

\tableofcontents

\newpage

\section{Introduction} \label{intro}

The field of contact geometry has enjoyed a resurgence in recent years after a long history of being shadowed by its even-dimensional symplectic sibling. With origins that can be traced to the work of Lie on differential equations in the 1890s, contact and Jacobi geometry developed from the seminal works of Kirillov \cite{kirillov1976local} and Lichnerowicz \cite{flato1976deformations}. See \cite{geiges2001brief} for a history of contact geometry. An important contribution to the recent rekindled interest in Jacobi geometry has been the line bundle approach to Jacobi manifolds, first developed by Marle \cite{marle1991jacobi} and championed by Vitagliano \cite{vitagliano2015dirac,vitagliano2019holo} and his students \cite{schnitzer2019normal} \cite{tortorella2017deformations}. To this day, the line bundle approach has proven the most effective at capturing the seemingly diverse zoology of Poisson-like structures that were identified in the literature in the last few decades. By considering a generic line bundle over a smooth manifold with its module of sections -- generalising the usual setting of a smooth manifold with its ring of functions -- it was possible to account for many previously known geometries: locally conformal Poisson (including ordinary Poisson and symplectic) \cite{lichnerowicz1986representation}, Jacobi \cite{ibanez1997coisotropic} and (pre)contact \cite{mrugala1991contact}.\newline

The aforementioned line bundle approach to Jacobi geometry has been encapsulated by the author as the Unit-Free formalism in the prequel paper \cite{zapata2020unitfree}. There, Poisson-like structures on line bundles were found to be closely related to a preliminary approach towards implementing physical dimension and units of measurement into geometric mechanics. In the unit-free approach to geometric mechanics one considers the observables of a physical system, represented by a smooth manifold $P$, to be given by the sections of some generic line bundle $L$ over $P$. This redefinition of observables, replacing the ring of functions $\Cin{P}$ by the module of sections $\Sec{L_P}$, proves very useful in translating usual Hamiltonian mechanics, which relies on a Poisson structure on functions, into the more general Jacobi setting. Despite the conceptual clarity and unification it provides, the unit-free formulation has one fundamental flaw: it removes -- or, at best, conceals -- the explicit algebraic structure of observables.\newline

This flaw of the unit-free formulation amounts to the sections of vector bundles being modules over rings of functions while lacking an internal multiplicative operation. This fact prompts the problem that we set out to solve in the present work. On the one hand, from a purely mathematical point of view, the lack of a ring-like structure limits the interpretation of the algebraic counterpart of Jacobi manifolds. It is well known that much in the same way that one considers rings of functions to be the algebraic counterparts of smooth manifolds, Poisson algebras are the counterparts of Poisson manifolds with natural correspondences of morphisms, tensor products and reduction \cite{fernandes2014lectures}; this is indeed just the restriction of the functor $\text{C}^\infty : \Man\to\Ring$ to the category of Poisson manifolds. Such a correspondence is not known for Jacobi manifolds, at least not in a directly analogous way: morphisms, products and reductions are well-understood geometrically but there is no known functor that will realise these constructions in a category of algebras.\newline

\textbf{Problem 1.} \emph{Can we find a category of algebraic structures and a functorial correspondence with the category of Jacobi manifolds that realises morphisms, reductions and products?}\newline

On the other hand, from a physical point of view, sections of a line bundle, without the presence of a multiplicative structure, are a dubious choice to represent observables. Physical quantities carry additive and multiplicative structures \cite{barenblatt1996scaling}; it is indeed commonplace to model observables in mathematical physics as algebras \cite{landsman2012mathematical}. Incidentally, an ordinary algebra structure seems insufficient for a realistic account of physical observables since it ignores all aspects of dimensional analysis. Physical quantities used in practice carry a partition or labelling that makes addition partially-defined and makes multiplication compatible with the labelling. Any formulation of geometric mechanics that aims to faithfully capture physical dimension and units of measurement would have to incorporate algebraic structures that share the aforementioned labelling, partial addition and multiplication.\newline

\textbf{Problem 2.} \emph{Is it possible to formulate a theory of Hamiltonian mechanics where observables carry the algebraic structure of the standard dimensional analysis of physical quantities?}\newline

Problems 1 and 2 are not only seemingly unrelated, they belong to entirely different branches of mathematical science. \textbf{However, in a serendipitous turn of events, the solution to 2 happens to lead to the solution to 1!} This is accomplished via the theory of Dimensioned Algebra introduced by the author in the prequel paper \cite{zapata2021dimensioned}. Dimensioned algebraic structures are sets with labels and binary operations that are either partially-defined on labelled subsets or are totally-defined but satisfy some natural compatibility with the labelling. If one attempts to develop the dimensioned analogue of commutative algebra, the notion of dimensioned ring, whose addition operation is partially defined but is otherwise defined via the same axioms, appears naturally, and it turns out that it exactly matches the structure of the dimensional analysis of physical quantities \cite{zapata2021dimensioned}. The key observation is that one can construct a dimensioned ring from the sections of tensor powers of a line bundle and its dual. Proposition \ref{PowerFunctorLineBundles} indeed shows that there is a functor $\odot: \Line_\Man \to \textsf{DimRing}$ mapping the category of line bundles to the category of dimensioned rings. This construction is the direct analogue\footnote{It is in fact a strict generalisation since one can regard manifolds as trivialised line bundles.} of the ordinary $\text{C}^\infty$ functor for the category of smooth manifolds.\newline

Interestingly, the dimensioned point of view suggests the definition of a natural class of structures generalising Jacobi manifolds that have not been previously identified in the literature. It turns out that the power functor $\odot$ can be defined not only for a single line bundle but for any ordered (finite) collection of line bundles over the same base manifold. Assuming the existence of a dimensioned Poisson algebra structure on the associated dimensioned ring gives a direct generalisation of a Jacobi manifold in this context. Furthermore, this is directly analogous to the notion that a Poisson manifold is a smooth manifold whose ring of functions carries a Poisson algebra structure. We call these structures \textbf{Poly-Jacobi Manifolds}.\newline

The contents are organized as follows: In Section \ref{Preliminaries} we cover the preliminaries summarizing both the unit-free and dimensioned formalisms and showing the dimensioned version of linear algebra in some detail to then proceed more swiftly when dealing with the differential geometry. Section \ref{LinesToDim} is dedicated to the power functor construction for general line bundles and associated notions such as derivations, jets and tensor fields. These constructions are then applied to the case of line bundles carrying a Jacobi bracket in Section \ref{JacToDim}: the dimensioned Poisson algebra of a Jacobi manifold is identified in \ref{dimPoiss}, the algebraic counter part of coisotropic reduction of Jacobi manifolds is found in \ref{Quotients} and the tensor product of dimensioned Poisson algebras is shown to capture the product of Jacobi manifolds in \ref{Products}. The generalised notion of a line bundle, called a poly-line bundle, is developed in \ref{polyLine}, we discuss the multiple generalisations of Jacobi manifods in \ref{GeneralisingJacobi} and Poly-Jacobi manifolds are defined and \ref{polyJacobiMan}. We conclude with some remarks and further commentary in Section \ref{commentary}.

\section{The Unit-Free and Dimensioned Formalisms} \label{Preliminaries}

\subsection{Lines and Line-Vector Spaces} \label{Lines}

In this section we begin by introducing the categories that ought to replace conventional fields of numbers and vector spaces. We are motivated by the \emph{unit-free intuition} by which we think of a $1$-dimensional vector space as a field of numbers where the unit has not been specified. A choice of unit will correspond to the choice of basis and thus it will give an isomorphism to a copy of the base field. The unit-free approach is developed by taking this simple idea into the context of algebra and geometry and replacing any instance the base field by a category of lines. It should be noted that most of the definitions and constructions in this section work for a generic field $\mathbb{F}$ but we will focus on the reals for concreteness.\newline

We identify \textbf{the category of lines}, $\Line$, as a subcategory of vector spaces $\Vect$. Objects are vector spaces over the field of real numbers $\mathbb{R}$ of dimension $1$. An object $L\in\Line$ will be called a \textbf{line}. A morphism in this category $B:L\to L'$, is an invertible linear map. Composition in the category $\Line$ is simply the composition of maps. If we think of $L$ and $L'$ as numbers without a choice of a unit, a morphism $B$ between them can be thought of as a unit-free conversion factor, for this reason we will often refer to a morphism of lines as a \textbf{factor}. We consider the field of real numbers, trivially a line when regarded as real a vector space, as a singled out object in the category of lines $\Real\in \Line$.\newline 

Note that all the morphisms in this category are, by definition, isomorphisms, thus making $\Line$ into a groupoid; however, one should not think of all the objects in the category as being equivalent. As we shall see below, there are times when one finds factors between lines (invertible morphisms by definition) in a canonical way, that is, without making any further choices beyond the information that specifies the lines, these will be called \textbf{canonical factors}. \newline

Note that the direct sum $\oplus$ is no longer defined in $\Line$, however, it is straightforward to check that $(\Line, \otimes, \mathbb{R})$ forms a symmetric monoidal category and that $*:\Line \to \Line$ is a duality contravariant autofunctor. The usual isomorphisms associated with the monoidal structure and the duality functor induce canonical factors between combinations of tensor products and duals of lines. In particular, $\Vect(L,L)\cong L^*\otimes L\in\Line$ has a distinguished non-zero element, the identity id$_L$, thus we find that it is canonically isomorphic to $\Real$ as lines. Therefore, for any line $L\in\Line$ we find a canonical factor
\begin{equation}
    p_L:L^*\otimes L\to \Real.
\end{equation}
This last result, under the intuition of lines as numbers without a choice of unit, allows us to reinterpret the singled out line $\Real$ informally as the set of procedures common to all lines by which a number gives any other number in a linear way (preserving ratios). This interpretation somewhat justifies the following adjustment in terminology: we will refer to the tensor unit $\Real\in\Line$ as \textbf{the patron line}. The term \textbf{unit} is reserved for non-vanishing elements of a line $u\in L^\bullet$, where we have denoted $L^\bullet:= L\backslash \{0\}$.\newline

In the same manner to how we have generalised the field of real numbers $\Real$ to the category of lines, we now generalise vector space into a larger category. We do so by identifying the category of \textbf{line-vector spaces} or \textbf{lvector spaces} defined as the product category
\begin{equation}
    \LVect:= \Vect \times \Line.
\end{equation}
Our notation for objects in this category will be $V^L:=(V,L)$ with $V\in\Vect$, $L\in\Line$, and similarly for morphisms $\psi^B:V^L\to W^{L'}$. Objects $V^L$ will be called \textbf{lvector spaces} and morphisms $\psi^B$ will be called \textbf{linear factors}. The \textbf{direct sum} of lvector spaces is defined when the line component is shared:
\begin{equation}
    V^L\oplus_L W^L:=(V\oplus W)^L.
\end{equation}
The notion of \textbf{subspace} and \textbf{quotient} are also similarly defined:
\begin{equation}
    U^L\subset V^L \text{ when } U \subset V \text{ is subspace, } \qquad V^L/U^L:= (V/U)^L.
\end{equation}
Since the line component of a lvector space $V^L$ plays the role of the unit-free field of scalars, the natural notion of dual space should be given by the space of linear maps from the vector space to the line, $\Vect(V,L)\cong V^*\otimes L$, we thus define \textbf{lduality} as follows: the ldual of a lvector space $V^L$ is
\begin{equation}
    V^{*L}:=(V^*\otimes L)^L
\end{equation}
and the ldual of a linear factor $\psi^B:V^L\to W^{L'}$ is
\begin{equation}
    \psi^{*B}:W^{*L'}\to V^{*L}
\end{equation}
with $\psi^{*B}(\beta^{l'})=\alpha^l$ such that
\begin{equation}
    \alpha=B^{-1}\circ \beta \circ \psi \qquad l=B^{-1}(l').
\end{equation}
There is also a natural notion of \textbf{lannihilator} of a subspace in a lvector space:
\begin{equation}
    U^{0L}:=\{\alpha\in V^{*L} | \quad \alpha(u)=0\in L, \forall u\in U\}\cong U^0\otimes L.
\end{equation}

\begin{prop}[$L$-Rooted Subcategories of $\LVect$]\label{LSumsDual}
By fixing a line $\normalfont L\in\Line$, the subcategory of lvector spaces sharing $L$ as line component form an abelian category with duality $\normalfont (\Vect^L,\oplus_L, ^{*L})$ that directly generalises the analogous categorical structures in ordinary vector spaces $\normalfont \Vect$. In particular, we have canonical isomorphic linear factors:
\begin{equation}
    (V^{*L})^{*L}\cong V^L, \qquad (V \oplus W)^{*L}\cong V^{*L}\oplus_L W^{*L}, \qquad V^{*L}/U^{0L}\cong U^{*L}.
\end{equation}
\end{prop}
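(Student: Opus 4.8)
The plan is to reduce every assertion to the corresponding statement in ordinary vector spaces by exploiting the fact that, once the line $L$ is fixed and all linear factors are restricted to those with factor component $\text{id}_L$, the assignment $V^L \mapsto V$ on objects and $\psi^{\text{id}_L} \mapsto \psi$ on morphisms is an isomorphism of categories onto $\Vect$. Under this identification the operations $\oplus_L$, the notion of subspace $U^L \subset V^L$, and the quotient $V^L/U^L$ are literally the corresponding operations on the $V$-component, with $L$ carried along as inert decoration. Consequently the zero object $0^L$, biproducts, kernels and cokernels all exist and satisfy the abelian axioms precisely because they do in $\Vect$; in particular the zero morphism is $0^{\text{id}_L}$, which is exactly why one must take $B=\text{id}_L$ for the hom-sets to be abelian groups. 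This establishes that $(\Vect^L, \oplus_L)$ is an abelian category reducing to $\Vect$ when $L=\Real$.

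Next I would check that lduality is a contravariant autofunctor of $\Vect^L$. On objects it is $V^L \mapsto (V^* \otimes L)^L$, and specialising the general formula $\alpha = B^{-1}\circ\beta\circ\psi$, $l = B^{-1}(l')$ to $B=\text{id}_L$ shows that on a morphism $\psi^{\text{id}_L}$ the ldual is the transpose twisted by the line, $\psi^*\otimes\text{id}_L : W^*\otimes L \to V^*\otimes L$. Contravariant functoriality (preserving identities, reversing composition) is then inherited from the ordinary transpose together with functoriality of $-\otimes L$. With the functor in hand, the three displayed isomorphisms follow by transporting the classical identities along $-\otimes L$, using three ingredients: the canonical factor $p_L : L^*\otimes L \to \Real$ from the preceding discussion, the exactness of $-\otimes L$ (automatic for vector spaces), and the distributivity of $\otimes$ over $\oplus$.

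Concretely, for the direct sum I would expand the $V$-component as $(V\oplus W)^*\otimes L \cong (V^*\oplus W^*)\otimes L \cong (V^*\otimes L)\oplus(W^*\otimes L)$, which is exactly the $V$-component of $V^{*L}\oplus_L W^{*L}$. For the annihilator quotient I would use the identification $U^{0L}\cong U^0\otimes L$ recorded in the excerpt and apply the exact functor $-\otimes L$ to the classical isomorphism $V^*/U^0 \cong U^*$, obtaining $(V^*\otimes L)/(U^0\otimes L) \cong U^*\otimes L$, that is $V^{*L}/U^{0L} \cong U^{*L}$. For the double ldual I would compute the $V$-component $(V^*\otimes L)^*\otimes L \cong V^{**}\otimes(L^*\otimes L)$ and then apply $p_L$ together with the canonical evaluation $V\cong V^{**}$.

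The main obstacle is not the existence but the \emph{canonicity} of these isomorphisms, in particular that the double-ldual isomorphism $(V^{*L})^{*L}\cong V^L$ is natural in $V^L$, since this is precisely what upgrades ${}^{*L}$ from a mere contravariant functor to a duality. I would verify that the composite built from the evaluation $V\to V^{**}$ and the canonical factor $p_L$ is natural by a diagram chase, checking compatibility with an arbitrary $\psi^{\text{id}_L}$ and its double transpose. This is where finite-dimensionality of the underlying vector spaces must be invoked, as $V\to V^{**}$ is an isomorphism only in that case; for infinite-dimensional $V$ one obtains merely a natural monomorphism and the statement should be read accordingly. The residual coherence—that lduality interchanges finite $\oplus_L$ with the $\oplus_L$ of lduals compatibly with these natural isomorphisms—is routine bookkeeping inherited from $\Vect$.
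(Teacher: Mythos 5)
Your proposal is correct and takes essentially the same route as the paper's (much terser) proof: both reduce every claim to ordinary linear algebra by transporting classical identities along $-\otimes L$ and invoking the canonical isomorphism $L^*\otimes L\cong\Real$ (the factor $p_L$). Your added precisions — restricting hom-sets to morphisms covering $\mathrm{id}_L$ so they form abelian groups, and flagging that finite-dimensionality is needed for the double-ldual isomorphism — are details the paper leaves implicit, not a deviation in method.
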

\begin{proof}
This follows from simple linear algebra arguments exploiting the peculiarities of the category of lines $\Line$: linear maps between lines are also 1-dimensional vector spaces (hence also lines) and the endomorphisms of a line are canonically isomorphic to the patron line $L^*\otimes L\cong \Real$.
\end{proof}

We can continue to generalise linear algebra following the L-rooted approach to define the line version of tensors. The \textbf{covariant ltensors} of a lvector space $V^L$ are defined as the $L$-valued multilinear forms:
\begin{equation}
    \mathcal{T}_k(V^L) := (\Vect(V, \stackrel{k}{\dots} ,V,L))^L=(V^*\otimes \stackrel{k}{\dots} \otimes V^*\otimes L)^L.
\end{equation}
By taking the convention $\mathcal{T}_0(V^L)=L$ we can define the graded lvector spaces of covariant ltensors in the obvious way:
\begin{equation}
    \mathcal{T}_\bullet (V^L) := \bigoplus_{k=0}^\infty \mathcal{T}_k(V^L).
\end{equation}
The \textbf{pull-back} of a linear factor $\psi^B:V^L\to W^{L'}$ is obtained by extending the definition of ldual map above for an arbitrary number of vector arguments
\begin{equation}
    \psi^{*B}: \mathcal{T}_k(W^{L'}) \to \mathcal{T}_k(V^L).
\end{equation}
At degree $0$ the pull-back $\psi^{*B}: L' \to L$ is simply the inverse of the factor $B^{-1}$. This definition of ltensor is due to the peculiarity of the L-rooted approach which is further exacerbated when we attempt to identify a generalisation of the tensor product of lvector spaces. One could simply use the Cartesian product of the monoidal structures in $\Vect$ and $\Line$ to endow $\LVect$ with a natural monoidal structure, however, this will produce tensor products of lines, which we are trying to avoid. This will imply that the tensor algebra of an ordinary vector space doesn't generalise to an object of the same algebraic nature in the context of lvector spaces. In fact, the price we pay for enforcing the L-rooted approach is that we give up algebra structures for modules over algebras.\newline

We can proceed explicitly by using the vector space structure of lines: let $\alpha \in \mathcal{T}_p(V)$ and $\beta\in \mathcal{T}_q(V^L)$, the module product
\begin{equation}
    \alpha \cdot \beta \in \mathcal{T}_{p+q}(V^L)
\end{equation}
is defined by its action on vector arguments
\begin{equation}
    \alpha \cdot \beta (v_1,\dots v_p,w_1,\dots w_q):= \alpha(v_1,\dots v_p) \cdot \beta(w_1,\dots w_q)\in L.
\end{equation}
This product indeed allows us to define the \textbf{module of covariant ltensors}.

\begin{prop}[Modules of Covariant LTensors]\label{CoLTensor}
The space of covariant ltensors of a given lvector space $V^L$ with the product defined above $(\mathcal{T}_\bullet (V^L),+,\cdot)$ is a module over the associative algebra of covariant tensors of the vector component $(\mathcal{T}_\bullet (V),+,\otimes)$. A linear factor $\psi^B:V^L\to W^{L'}$ induces a morphism of modules covering a morphism of associative algebras via its pull-back $\psi^{*B}$, i.e. for all $\alpha\in \mathcal{T}_p(V)$ and $\beta,\gamma\in \mathcal{T}_q(V^L)$
\begin{equation}
    \psi^{*B}(\beta+\gamma)=\psi^{*B}\beta+\psi^{*B}\gamma, \qquad \psi^{B*}(\alpha\cdot \beta)= \psi^*\alpha \cdot \psi^{*B}\beta.
\end{equation}
\end{prop}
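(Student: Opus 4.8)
The plan is to treat this as a direct verification of the module axioms and of the morphism compatibility, reducing everything to the $\Real$-bilinearity and associativity of the scalar-multiplication action of the field on the line $L$. Since the only structure available on $L$ is that of a $1$-dimensional real vector space, I will track carefully which quantities are honest scalars in $\Real$ (values of ordinary tensors $\alpha\in\mathcal{T}_\bullet(V)$) and which live in $L$ (values of ltensors $\beta\in\mathcal{T}_\bullet(V^L)$). Concretely the product is the $L$-valued form $(\alpha\cdot\beta)(v_1,\dots,v_p,w_1,\dots,w_q)=\alpha(v_1,\dots,v_p)\cdot\beta(w_1,\dots,w_q)$, where the right-hand dot is scalar multiplication in $L$.

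First I would check that $\cdot$ is well defined, i.e. that $\alpha\cdot\beta$ is genuinely multilinear and $L$-valued: multilinearity in the $v_i$ slots comes from $\alpha$, multilinearity in the $w_j$ slots comes from $\beta$ together with the linearity of scalar multiplication, and the output lies in $L$ by construction. The module axioms then follow by evaluating both sides on arbitrary vector arguments: the distributivities $\alpha\cdot(\beta+\gamma)=\alpha\cdot\beta+\alpha\cdot\gamma$ and $(\alpha+\alpha')\cdot\beta=\alpha\cdot\beta+\alpha'\cdot\beta$ reduce to distributivity of scalar multiplication over addition in $L$; the action identity $(\alpha\otimes\alpha')\cdot\beta=\alpha\cdot(\alpha'\cdot\beta)$ reduces, after splitting the vector arguments according to the degrees, to the mixed associativity $(st)\ell=s(t\ell)$ for $s,t\in\Real$ and $\ell\in L$; and unitality $1\cdot\beta=\beta$ is immediate from $1\in\mathcal{T}_0(V)=\Real$. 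This establishes that $(\mathcal{T}_\bullet(V^L),+,\cdot)$ is a module over $(\mathcal{T}_\bullet(V),+,\otimes)$.

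For the second statement I would first record the explicit degree-$k$ pull-back, extending the degree-$1$ formula from the ldual: for $\beta\in\mathcal{T}_k(W^{L'})$, $(\psi^{*B}\beta)(v_1,\dots,v_k)=B^{-1}\big(\beta(\psi(v_1),\dots,\psi(v_k))\big)$, which at degree $0$ is simply $B^{-1}:L'\to L$. Additivity $\psi^{*B}(\beta+\gamma)=\psi^{*B}\beta+\psi^{*B}\gamma$ then follows from the linearity of $B^{-1}$ and of precomposition with $\psi$. For the covering property I would evaluate $\psi^{*B}(\alpha\cdot\beta)$ on $(v_1,\dots,v_{p+q})$, use the definition of $\cdot$ to isolate $\alpha(\psi v_1,\dots,\psi v_p)\in\Real$ as a scalar factor, and then invoke the linearity of $B^{-1}$ --- i.e. that a factor commutes with scalar multiplication --- to move $B^{-1}$ past that scalar. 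Comparing with the expansion of $\psi^*\alpha\cdot\psi^{*B}\beta$ via the standard pull-back $(\psi^*\alpha)(v_1,\dots,v_p)=\alpha(\psi v_1,\dots,\psi v_p)$ yields equality slot-by-slot. One should also note in passing that $\psi^*$ is an algebra morphism for $\otimes$, so that $\psi^{*B}$ does cover a genuine morphism of associative algebras.

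I do not anticipate a serious obstacle: the content is entirely linear and every identity holds argument-by-argument. The only point requiring real care is the bookkeeping of the split arguments and, above all, the persistent distinction between $\Real$-valued and $L$-valued factors, since the single load-bearing fact throughout is that scalar multiplication $\Real\times L\to L$ is bilinear, associative with the product of $\Real$, and commuted past by the linear factor $B^{-1}$. It is precisely the absence of any internal product on $L$ that forces the module-over-algebra picture of the statement rather than an algebra morphism, exactly as anticipated in the surrounding discussion.
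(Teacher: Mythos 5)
Your proposal is correct and follows essentially the same route as the paper's proof, which simply records the explicit pull-back formula $\psi^{*B}\beta(v_1,\dots,v_q)=B^{-1}(\beta(\psi(v_1),\dots,\psi(v_q)))$ and appeals to the fact that the module product is built from the vector space structure of the line; your write-up just fills in the slot-by-slot verifications that the paper leaves implicit. The key points you isolate --- bilinearity and mixed associativity of scalar multiplication $\Real\times L\to L$, and that the linear map $B^{-1}$ commutes with scalars --- are exactly the ``basic linear algebra arguments exploiting the peculiarities of lines'' the paper invokes.
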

\begin{proof}
This follows trivially from basic linear algebra arguments noting that the module product is simply constructed from the vector space structure of lines and the fact that pull-backs are explicitly given by:
\begin{equation}
    \psi^{*B}\beta (v_1,\dots v_q):=B^{-1}(\beta(\psi(v_1),\dots \psi(v_q)), \qquad \psi^{*B}(l):=B^{-1}(l)
\end{equation}
for all $\beta\in \mathcal{T}_q(V^L)$ and $l\in L'$.
\end{proof}

Mirroring ordinary multilinear algebra, \textbf{contravariant ltensors} are defined as the covariant tensors of the ldual:
\begin{equation}
    \mathcal{T}^\bullet (V^L):=\mathcal{T}_\bullet (V^{*L}),
\end{equation}
and the \textbf{push-forward} of a linear factor $\psi^B:V^L\to W^{L'}$ is simply defined as de ldual of the ldual:
\begin{equation}
    \psi^B_*:=(\psi^{*B})^{*B^{-1}}.
\end{equation}

\begin{prop}[Modules of Contravariant LTensors]\label{ContraLTensor}
The space of contravariant ltensors of a given lvector space $V^L$ with the product defined above $(\mathcal{T}^\bullet (V^L),+,\cdot)$ is a module over the associative algebra of contravariant tensors of the vector component $(\mathcal{T}^\bullet (V),+,\otimes)$. A linear factor $\psi^B:V^L\to W^{L'}$ induces a morphism of modules covering a morphism of associative algebras via its push-forward $\psi^B_*$, i.e. for all $a\in \mathcal{T}^p(V)$ and $b,c\in \mathcal{T}^q(V^L)$
\begin{equation}
    \psi^B_*(b+c)=\psi^B_*b+\psi^B_*c, \qquad \psi^B_*(a\cdot b)= \psi_*a \cdot \psi^B_*b.
\end{equation}
\end{prop}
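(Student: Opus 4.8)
The plan is to obtain this statement as the formal ldual of Proposition \ref{CoLTensor}, exploiting the two defining identities $\mathcal{T}^\bullet(V^L) := \mathcal{T}_\bullet(V^{*L})$ and $\psi^B_* := (\psi^{*B})^{*B^{-1}}$. First I would apply Proposition \ref{CoLTensor} verbatim to the ldual lvector space $V^{*L} = (V^*\otimes L)^L$, treating it as the lvector space to which the covariant theory applies. This immediately furnishes $(\mathcal{T}^\bullet(V^L),+,\cdot) = (\mathcal{T}_\bullet(V^{*L}),+,\cdot)$ as a module, equipped with the module product $\cdot$ and with the additivity and product-compatibility of pull-backs already recorded there. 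Since the push-forward is by definition the pull-back of the ldual factor, composing the explicit pull-back formula of Proposition \ref{CoLTensor} with itself yields the closed form $\psi^B_* b(\gamma_1,\dots,\gamma_q) = B\big(b(\psi^{*B}\gamma_1,\dots,\psi^{*B}\gamma_q)\big)$ for $b\in\mathcal{T}^q(V^L)$ and arguments $\gamma_i$ in the vector component of $W^{*L'}$, the outer factor being $C^{-1}=(B^{-1})^{-1}=B$; at degree $0$ the push-forward is just the factor $B:L\to L'$ itself.

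With this formula in hand the two assertions reduce to direct substitution. Additivity $\psi^B_*(b+c)=\psi^B_* b+\psi^B_* c$ is immediate from the linearity of the double ldual, and the multiplicativity $\psi^B_*(a\cdot b)=\psi_* a\cdot\psi^B_* b$ dualizes the identity $\psi^{*B}(\alpha\cdot\beta)=\psi^*\alpha\cdot\psi^{*B}\beta$ of Proposition \ref{CoLTensor} term by term, the covering algebra morphism being the ordinary tensor push-forward $\psi_*=\psi^{\otimes\bullet}$ on the base. Once the base algebra and its action are fixed, the associativity and unitality of the module structure are inherited verbatim from the covariant case, since both are built solely from the vector-space structure of the lines and the evaluation pairing.

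The main obstacle I anticipate is not conceptual but a careful tracking of the duality twists introduced by the line $L$. The subtlety is that the vector component of the ldual is $V^*\otimes L$ rather than $V^*$, so that Proposition \ref{CoLTensor} applied to $V^{*L}$ literally produces a module over the covariant tensors $\mathcal{T}_\bullet(V^*\otimes L)$, whose degree-$p$ piece carries an extra factor $(L^*)^{\otimes p}$ relative to $\mathcal{T}^\bullet(V)$. Reconciling these with the patron-line factor $p_L:L^*\otimes L\cong\Real$ of Proposition \ref{LSumsDual} is precisely what makes the module product land in the correct twisted spaces $\mathcal{T}^{p+q}(V^L)$ and what allows the covering morphism to be written simply as $\psi_*$. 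The substantive content of the proof is therefore the verification that, after the double ldual $(\cdot)^{*B^{-1}}\!\circ(\cdot)^{*B}$, these compensating twists collapse consistently; the module axioms then follow at once from Proposition \ref{CoLTensor}.
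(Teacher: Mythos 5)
Your proposal is correct and takes essentially the same route as the paper, whose entire proof reads that the result ``is a direct consequence of Prop.~\ref{CoLTensor} and the duality property $(V^{*L})^{*L}\cong V^L$'' -- your application of Proposition \ref{CoLTensor} to $V^{*L}$, the explicit formula $\psi^B_* b(\gamma_1,\dots,\gamma_q)=B\bigl(b(\psi^{*B}\gamma_1,\dots,\psi^{*B}\gamma_q)\bigr)$, and the term-by-term dualization of the module identities are exactly the details left implicit in that one-line argument. If anything, your closing remark on the $L$-twists (that Proposition \ref{CoLTensor} applied to $V^{*L}$ literally yields a module over $\mathcal{T}_\bullet(V^*\otimes L)$ rather than over $\mathcal{T}^\bullet(V)$, and that an identification must be made before the covering morphism can be written as $\psi_*$) is more careful than the paper itself, which passes over this point in silence.
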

\begin{proof}
This is a direct consequence of Prop. \ref{CoLTensor} and the duality property $(V^{*L})^{*L}\cong V^L$.
\end{proof}

The module structures defined above admit natural symmetrisation and antisymmetrisation. In particular, we can identify the \textbf{exterior modules} of \textbf{lmultivectors} and \textbf{lforms} in the obvious way:
\begin{align}
    \wedge_\bullet (V^L) & := (\wedge^\bullet (V^*)\otimes L)^L,\\
    \wedge^\bullet (V^L) & := (\wedge^\bullet (V\otimes L^*)\otimes L)^L.
\end{align}
It is then a simple check to see that these are finite-dimensional lvector spaces with module structures over the ordinary exterior algebras of multivectors. Furthermore, push-forwards and pull-backs restrict to module morphisms of lmultivectors and lforms respectively.

\subsection{Line Bundles as Unit-Free Manifolds} \label{LineBundles}

As it is customary in differential geometry, now that we have identified some interesting structures at the linear level it is time to smoothly ``smear'' them on manifolds and develop the corresponding bundle generalisation. This will result in the identification of the category of line bundles $\Line_\Man$ and the category of lvector bundles $\LVect_\Man$ whose objects will be fibrations with bases in the category of smooth manifold $\Man$ and fibres in linear categories $\Line$ and $\LVect$, respectively. In this section we shall give precise definitions for these categories and show some elementary constructions in them.\newline

We define the \textbf{category of line bundles} $\Line_\Man$ as the subcategory of $\Vect_\Man$ whose objects are rank $1$ vector bundles $\lambda: L \to M$ and whose morphisms are regular, i.e. fibre-wise invertible, bundle morphisms covering general smooth maps
\begin{equation}
\begin{tikzcd}
L_1 \arrow[r, "B"] \arrow[d, "\lambda_1"'] & L_2 \arrow[d, "\lambda_2"] \\
M_1 \arrow[r, "\varphi"'] & M_2
\end{tikzcd}
\end{equation}
In the interest of brevity, we may sometimes refer to line bundles $L\in \Line_\Man$ as \textbf{lines} and regular line bundle morphisms $B\in\Line_\Man(L_1,L_2)$ as \textbf{factors}. A factor covering a diffeomorphism, i.e. a line bundle isomorphism, is called a \textbf{diffeomorphic factor}. Similarly, a factor covering an embedding or submersion is called an \textbf{embedding factor} or \textbf{submersion factor}, respectively.\newline

In the propositions that follow below we shall see that the usual structural constructions on smooth manifolds, such as submanifolds and products, find a natural generalisation within the category of line bundles. This then justifies the term \textbf{unit-free manifold} for a line bundle since we can regard individual fibres as fields of numbers without a choice of unit, as discussed at the start of Section \ref{Lines}. Unit-free manifolds generalise ordinary smooth manifolds in the sense that what are identical copies of $\Real$ at every point of a manifold in the ordinary case, i.e. any smooth manifold can be regarded as a globally-trivialised line bundle, become distinct generic 1-dimensional vector spaces.

\begin{prop}[Submanifolds of Line Bundles]\label{SubManLineBundle}
Let an embedded manifold $i:S\hookrightarrow M$ and a line bundle $\lambda: L \to M$, then restriction to the submanifold gives a canonical embedding factor
\begin{equation}
\begin{tikzcd}
L_S \arrow[r, "\iota"] \arrow[d] & L \arrow[d] \\
S \arrow[r,hook, "i"'] & M
\end{tikzcd}
\end{equation}
where $L_S:=i^*L$.
\end{prop}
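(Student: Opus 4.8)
The plan is to realise $L_S=i^*L$ as the pullback (fibre product) bundle and to exhibit $\iota$ as the canonical projection onto the total space $L$, and then to verify the three defining features of an embedding factor: that $\iota$ is a smooth bundle morphism, that it is fibre-wise invertible, and that it covers the embedding $i$. First I would write the total space explicitly as the fibre product
\begin{equation}
    L_S = i^*L := \{(s,e)\in S\times L \mid i(s)=\lambda(e)\},
\end{equation}
equipped with the projection $\pi_S:(s,e)\mapsto s$ onto $S$. Because $\lambda$ is a submersion it is transverse to the embedding $i$, so this fibre product is an embedded submanifold of $S\times L$ of dimension $\dimm S+1$. Local triviality of $\lambda$ over an open $U\subseteq M$ gives $L|_U\cong U\times\Real$, and restricting the transition data along $i$ yields a trivialisation $L_S|_{i^{-1}(U)}\cong i^{-1}(U)\times\Real$ whose cocycle is the pullback $g_{\alpha\beta}\circ i$ of that of $L$; as these are smooth and $\Real^\bullet$-valued, $\pi_S:L_S\to S$ is a rank $1$ vector bundle, i.e.\ a line in $\Line_\Man$.

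Next I would define $\iota:L_S\to L$ by $(s,e)\mapsto e$ and confirm it is a factor. By construction the defining square commutes, $\lambda\circ\iota=i\circ\pi_S$, so $\iota$ covers $i$. Over a point $s\in S$ the fibre $(L_S)_s=\{s\}\times L_{i(s)}$ is sent linearly and bijectively onto $L_{i(s)}\subset L$, so $\iota$ is fibre-wise invertible, hence a regular line bundle morphism. In the trivialisations above $\iota$ reads $(x,t)\mapsto(i(x),t)$, which is manifestly smooth, so indeed $\iota\in\Line_\Man(L_S,L)$.

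Finally, since $i$ is an embedding and $\iota$ is a fibre-wise isomorphism, the same local form $(x,t)\mapsto(i(x),t)$ shows that $\iota$ is itself a smooth embedding, in fact a diffeomorphism onto the restricted bundle $\lambda^{-1}(i(S))\subset L$; thus $\iota$ is an embedding factor in the sense defined above. The construction is canonical because it invokes only the given data $i$ and $\lambda$, with no auxiliary choice of trivialisation, splitting, or metric entering the definition of $\iota$ (the local trivialisations serve only to check smoothness). The one step I would be most careful about, and the only mild obstacle, is precisely the verification that the fibre product is a smooth embedded submanifold on which $\pi_S$ has constant rank $1$; this is exactly what transversality of the submersion $\lambda$ to $i$ guarantees, so the remainder is routine.
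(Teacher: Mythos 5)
Your proposal is correct and follows exactly the route the paper takes: the paper's proof is simply the remark that the statement ``follows trivially by construction of pull-back bundle,'' and your argument is that construction carried out in full detail (fibre product smoothness via transversality of the submersion $\lambda$, restricted cocycles for local triviality, and fibre-wise invertibility of the projection $\iota$). Note that, with the paper's definitions, an embedding factor is merely a factor \emph{covering} an embedding, so your final step showing that $\iota$ is itself an embedding is a correct but unneeded bonus.
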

\begin{proof}
This follows trivially by construction of pull-back bundle.
\end{proof}

Let us now define the analogue of the Cartesian product of manifolds in the category $\Line_\Man$. Consider two line bundles $\lambda_i:L_i\to M_i$, $i=1,2$, we will use the notations $L_i$ and $L_{M_i}$ indistinctly. We begin by defining the set of all linear invertible maps between fibres:
\begin{equation}
    M_1 \dtimes M_2:=\{B_{x_1x_2}:L_{x_1}\to L_{x_2}, \text{ factor }, (x_1,x_2)\in M_1\times M_2\},
\end{equation}
we call this set the \textbf{base product} of the line bundles. Let us denote by $p_i:M_1\dtimes M_2 \to M_i$ the obvious projections.

\begin{prop}[Base Product of Line Bundles]\label{BaseProductLineBundles}
The base product $M_1\dtimes M_2$ is a smooth manifold, furthermore, the natural $\Real^\times$-action given by fibre-wise multiplication makes $M_1\dtimes M_2$ into a principal bundle
\begin{equation}
    \begin{tikzcd}[column sep=0.1em]
        \Real^\times & \Acts & M_1\dtimes M_2  \arrow[d,"p_1\times p_2"]\\
         & & M_1\times M_2.
    \end{tikzcd}
\end{equation}
\end{prop}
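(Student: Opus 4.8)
The plan is to exhibit $M_1 \dtimes M_2$ as a total space of a principal $\Real^\times$-bundle by constructing explicit local trivialisations from those of $L_1$ and $L_2$, and then checking smooth compatibility of the transition data. First I would recall that a factor $B_{x_1 x_2}: L_{x_1} \to L_{x_2}$ is an invertible linear map between $1$-dimensional spaces, so the fibre of $p_1 \times p_2$ over a point $(x_1,x_2) \in M_1 \times M_2$ is exactly $\Vect(L_{x_1},L_{x_2}) \setminus \{0\} \cong (L_{x_1}^* \otimes L_{x_2})^\bullet$, a $1$-dimensional space with its zero removed, which is an $\Real^\times$-torsor under the fibre-wise scaling action. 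This already makes the set-theoretic statement that $p_1 \times p_2$ has $\Real^\times$-torsor fibres transparent; the content is to promote it to a smooth principal bundle.

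Next I would set up the manifold structure. Over a trivialising open set $U_i \subset M_i$ equipped with a nowhere-vanishing local section (a local unit) $u_i \in \Sec{L_i|_{U_i}}$, every factor $B_{x_1 x_2}$ with $(x_1,x_2)\in U_1\times U_2$ is uniquely determined by the nonzero scalar $c$ with $B_{x_1 x_2}(u_1(x_1)) = c\, u_2(x_2)$. This yields a bijection
\begin{equation}
    \Phi_{U_1,U_2} : (p_1\times p_2)^{-1}(U_1\times U_2) \xrightarrow{\ \sim\ } (U_1\times U_2)\times \Real^\times, \qquad B_{x_1x_2}\mapsto ((x_1,x_2),c),
\end{equation}
which I declare to be a diffeomorphism, thereby transporting the product smooth structure onto each piece. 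I would then argue that these local charts cover $M_1 \dtimes M_2$ and are mutually compatible: on an overlap the local units change by nowhere-vanishing smooth functions $g_i = u_i'/u_i$, so the new scalar is $c' = g_2(x_2)\, g_1(x_1)^{-1}\, c$, and the resulting transition map on $(U_1\cap U_1')\times(U_2\cap U_2')\times\Real^\times$ is smooth with smooth inverse. This simultaneously establishes that $M_1\dtimes M_2$ is a smooth manifold and that the $\Real^\times$-action (which in each chart is just multiplication in the last coordinate) is smooth, free, and transitive on fibres, with $p_1\times p_2$ as the quotient projection.

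The last step is to confirm the principal bundle axioms cleanly: freeness and fibre-transitivity of the action follow from the torsor description above, local triviality is exactly the family of $\Phi_{U_1,U_2}$, which are $\Real^\times$-equivariant by construction, and the orbit space is diffeomorphic to $M_1\times M_2$ via $p_1\times p_2$. The main obstacle I anticipate is purely bookkeeping rather than conceptual: one must verify that the local-unit description is independent of the choices made and that the cocycle condition holds globally, i.e. that the transition functions $g_2 g_1^{-1}$ genuinely patch into a well-defined $\Real^\times$-valued \v{C}ech cocycle on $M_1\times M_2$. This is where care is needed, since the transition data couples the trivialisations of \emph{both} line bundles; but because each $g_i$ is a legitimate transition function for $L_i$, their combination automatically satisfies the cocycle identity, and no genuine difficulty arises. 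One can note in passing that this identifies $M_1 \dtimes M_2$ as the frame bundle of the line bundle $L_1^* \boxtimes L_2 \to M_1\times M_2$, which gives an alternative, coordinate-free route to the same conclusion.
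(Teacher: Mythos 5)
Your proposal is correct and follows essentially the same route as the paper's proof: you build charts $(U_1\times U_2)\times\Real^\times$ from local trivialisations of $L_1$ and $L_2$, and use the transition (cocycle) data of the two line bundles to obtain smooth compatibility and local triviality of the $\Real^\times$-action. Your concluding identification of $M_1\dtimes M_2$ with the frame bundle of $L_1^*\boxtimes L_2$ is a nice additional observation, but the core argument coincides with the paper's.
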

\begin{proof}
This is shown by taking trivialisations $L|_{U_i}\cong U_i\times \Real$, $i=1,2$ that give charts of the form $U_1\times U_2\times \Real^\times$ for the open neighbourhoods $(p_1\times p_2)^{-1}(U_1 \times U_2)\subset M_1\dtimes M_2$. The cocycle condition of the transition functions of the two line bundles gives the local triviality condition for the $\Real^\times$-action to define a principal bundle.
\end{proof}

The construction of the base product of two line bundles $M_1\dtimes M_2$ allows to identify a factor $B:L_1\to L_2$ covering a smooth map $\varphi:M_1\to M_2$ with a submanifold that we may regard as the line bundle analogue of a graph:
\begin{equation}
    \LGrph{B}:=\{C_{x_1x_2}:L_{x_1}\to L_{x_2}|\quad x_2=\varphi(x_1), \quad C_{x_1x_2}=B_{x_1}\} \subset M_1\dtimes M_2,
\end{equation}
we call this submanifold the \textbf{lgraph} of the factor $B$.\newline

We define the \textbf{line product} of the line bundles as $L_1\utimes L_2 :=p_1^*L_1$, which is a line bundle over the base product 
\begin{equation}
    \lambda_{12}:L_1\utimes L_2\to M_1\dtimes M_2.
\end{equation}

\begin{prop}[Product of Line Bundles, \cite{schnitzer2019weakly}]\label{ProductLineBundle}
The line product construction $\utimes$ is a well-defined categorical product
\begin{equation}
    \normalfont\utimes:\Line_\Man \times \Line_\Man \to \Line_\Man
\end{equation}
with the corresponding projection factors fitting in the commutative diagram:
\begin{equation}\label{LineProductCommutativeDiagram}
\begin{tikzcd}
L_1 \arrow[d, "\lambda_1"'] & L_1\utimes L_2 \arrow[l,"P_1"']\arrow[d, "\lambda_{12}"]\arrow[r,"P_2"] & L_2\arrow[d,"\lambda_2"] \\
M_1 & M_1 \dtimes M_2\arrow[l,"p_1"]\arrow[r,"p_2"'] & M_2
\end{tikzcd}
\end{equation}
\end{prop}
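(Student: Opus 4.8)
The plan is to verify directly that $(L_1\utimes L_2, P_1, P_2)$ satisfies the universal property of a categorical product in $\Line_\Man$. That $L_1\utimes L_2 = p_1^*L_1$ is a genuine line bundle over $M_1\dtimes M_2$ is immediate, as the pullback of a rank-$1$ bundle along the smooth map $p_1$ (smooth by Proposition \ref{BaseProductLineBundles}) is again of rank $1$. I would define the two projection factors fibrewise over a point $B_{x_1x_2}\in M_1\dtimes M_2$---recalling that this point \emph{is} an invertible linear map $B_{x_1x_2}:L_{x_1}\to L_{x_2}$---by identifying the fibre $(L_1\utimes L_2)_{B_{x_1x_2}} = L_{x_1}$ and setting $P_1$ to be the tautological pullback identification with $(L_1)_{x_1}$ and $P_2$ to be the evaluation $v\mapsto B_{x_1x_2}(v)\in (L_2)_{x_2}$. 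Since each $B_{x_1x_2}$ is by definition a linear isomorphism, both $P_1$ and $P_2$ are fibrewise invertible, hence factors covering $p_1$ and $p_2$; commutativity of diagram \eqref{LineProductCommutativeDiagram} then holds by construction.

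For the universal property I would take an arbitrary test line bundle $\kappa:K\to N$ together with factors $g_1:K\to L_1$ and $g_2:K\to L_2$ covering smooth maps $\gamma_1:N\to M_1$ and $\gamma_2:N\to M_2$. The guiding observation is that such a pair of factors is exactly the data needed to manufacture, at each point, a factor between the relevant fibres of $L_1$ and $L_2$: for $y\in N$ the fibrewise isomorphisms $(g_1)_y:K_y\to (L_1)_{\gamma_1(y)}$ and $(g_2)_y:K_y\to (L_2)_{\gamma_2(y)}$ compose to $(g_2)_y\circ (g_1)_y^{-1}:(L_1)_{\gamma_1(y)}\to (L_2)_{\gamma_2(y)}$, which is precisely a point of $M_1\dtimes M_2$. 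This yields the base map $\psi:N\to M_1\dtimes M_2$, $\psi(y):=(g_2)_y\circ (g_1)_y^{-1}$, satisfying $p_i\circ\psi=\gamma_i$, and the mediating factor $h:K\to L_1\utimes L_2$ is then just $g_1$ read into the fibre $(L_1\utimes L_2)_{\psi(y)}=(L_1)_{\gamma_1(y)}$. A fibrewise check gives $P_1\circ h=g_1$ (as $P_1$ is the tautological identity) and $P_2\circ h=g_2$ (as $(P_2)_{\psi(y)}=\psi(y)$ cancels the factor $(g_1)_y^{-1}$).

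Uniqueness should follow by running this computation backwards: if $h'$ covers some $\psi'$ and satisfies $P_i\circ h'=g_i$, then $P_1\circ h'=g_1$ forces $h'_y=(g_1)_y$ fibrewise (since $(P_1)_{\psi'(y)}$ is the identity), after which $P_2\circ h'=g_2$ forces $\psi'(y)\circ (g_1)_y=(g_2)_y$, i.e. $\psi'(y)=\psi(y)$; hence $h'$ agrees with $h$ on both base and total space.

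The step I expect to require the most care is smoothness: confirming that $\psi$ and $h$ are honest smooth maps and not merely fibrewise-defined set maps. I would settle this in the trivialising charts of Proposition \ref{BaseProductLineBundles}, where over $U_1\times U_2$ the base product is coordinatised by $U_1\times U_2\times\Real^\times$. Choosing local frames for $L_1$ and $L_2$, the factors $g_1,g_2$ are recorded by nowhere-vanishing smooth functions $a,b$ on $N$, so $\psi$ is given in the chart by $y\mapsto(\gamma_1(y),\gamma_2(y),b(y)/a(y))$, manifestly smooth; and $h$, being $g_1$ postcomposed with the smooth pullback identification, inherits smoothness from $g_1$. With smoothness established, $h$ is a regular bundle morphism (fibrewise it equals the invertible $(g_1)_y$), completing the verification that $\utimes$ is a categorical product.
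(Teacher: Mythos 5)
Your proof is correct. Note that the paper itself gives no argument for this proposition---its ``proof'' is a citation to Prop.~2.2.4 of the prequel \cite{zapata2020unitfree}---so there is no in-paper argument to compare against; your verification is the standard direct one and supplies exactly what that citation defers. All the essential points check out: the tautological projection $P_1$ and the evaluation projection $P_2$ are fibrewise invertible precisely because points of $M_1 \dtimes M_2$ are, by definition, invertible maps between fibres; the mediating factor is forced fibrewise to equal $(g_1)_y$ with base map $\psi(y) = (g_2)_y \circ (g_1)_y^{-1}$, which simultaneously gives existence and uniqueness; and the one genuinely delicate issue---that $\psi$ and $h$ are smooth rather than merely fibrewise-defined---is correctly resolved in the trivialising charts of Proposition \ref{BaseProductLineBundles}, where $\psi$ reads $y \mapsto (\gamma_1(y), \gamma_2(y), b(y)/a(y))$ with $a,b$ smooth and nowhere vanishing. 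Your construction also makes transparent the asymmetry remark the paper makes after Proposition \ref{PowerLineBundleProduct1}: the total space is $p_1^*L_1$, and the second projection only exists because the base points themselves carry the identification of the two fibres.
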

\begin{proof}
See \cite[Prop. 2.2.4]{zapata2020unitfree}.
\end{proof}

If we consider a single line bundle $\lambda:L\to M$ the construction of the base product $M\dtimes M$ coincides with the usual definition of the \textbf{general linear groupoid} $p_1,p_2:\text{GL}(L)\to M$ with the projections $p_1$ and $p_2$ acting as the source and target maps respectively. This is clearly a transitive groupoid, with single orbit $M$, and the isotropies are $\text{GL}(L_x)$ which form a bundle of Lie groups isomorphic to the frame bundle $L^\times:= L^*\backslash \{0\}$. The set of diffeomorphic factors from $L$ to itself is called the \textbf{group of automorphisms} $\text{Aut}(L)$ and it is a simple check to verify that they correspond to the bisections of the general linear groupoid $\text{GL}(L)$. These objects represent the intrinsic symmetries of line bundles; indeed, the unit-free analogue of how the groups of diffeomorphisms capture the intrinsic symmetries of ordinary manifolds.\newline

Having shown that the category of line bundles is a direct generalisation of the category of smooth manifolds in terms of geometric constructions, we now move on to discuss how sections of line bundles represent a unit-free generalisation of the functions on a manifold. The facts discussed in the remainder of this section should convey the notion that modules of sections are the algebraic counterpart of line bundles in the same way that rings of functions are the algebraic counterpart of manifolds.\newline

Let a $B:L_1\to L_1$ be a factor covering the smooth map $\varphi:M_1\to M_2$, we can define the \textbf{factor pull-back} on sections as:
\begin{align}
B^*: \Sec{L_2} & \to \Sec{L_1}\\
s_2 & \mapsto B^*s_2
\end{align}
where
\begin{equation}
    B^*s_2(x_1):=B^{-1}_{x_1}s_2(\varphi(x_1))
\end{equation}
for all $x_1\in M_1$. Note that the fibre-wise invertibility of $B$ has critically been used for this to be a well-defined map of sections. The $\Real$-Linearity of $B$ implies the following interaction of factor pull-backs with the module structures:
\begin{equation}
    B^*(f\cdot s)=b^*f\cdot B^*s \quad \forall f\in\Cin{M_2}, s\in \Sec{L_{M_2}}.
\end{equation}
Then, if we consider $\textsf{RMod}$, the category of modules over rings with module morphisms covering ring morphisms, the assignment of sections becomes a contravariant functor
\begin{equation}
    \Gamma:\Line_\Man \to \textsf{RMod}.
\end{equation}
This is in contrast with the similar situation for sections of general vector bundles, since pull-backs of sections are not always defined for non-regular vector bundle morphisms.\newline

We may give an algebraic characterization of submanifolds of the base space of a line bundle as follows: for an embedded submanifold $i:S\hookrightarrow M$ we define its \textbf{vanishing submodule} as
\begin{equation}
    \Gamma_S:=\Ker{\iota^*}=\{s\in\Sec{L_M}|\quad s(x)=0\in L_x \quad \forall x\in S\}.
\end{equation}
This is the line bundle analogue to the characterization of submanifolds with their vanishing ideals. In fact, the two notions are closely related since (depending on the embedding $i$, perhaps only locally) we have
\begin{equation}
    \Gamma_S=I_S\cdot \Sec{L_M},
\end{equation}
where $I_S:=\Ker{i^*}$ is the multiplicative ideal of functions vanishing on the submanifold. This ideal gives a natural isomorphism of rings $\Cin{S}\cong \Cin{M}/I_S$ which, in turn, gives the natural isomorphism of $\Cin{S}$-modules
\begin{equation}
    \Sec{L_S}\cong \Sec{L}/\Gamma_S.
\end{equation}

The construction of the line product as a pull-back bundle over the base product indicates that the sections $\Sec{L_1\utimes L_2}$ are spanned by the sections of each factor, $\Sec{L_1}$, $\Sec{L_2}$, and the functions on the base product. More precisely, we have the isomorphisms of $\Cin{M_1\dtimes M_2}$-modules:
\begin{equation}
    \Cin{M_1\dtimes M_2}\cdot P_1^*\Sec{L_1}\cong \Sec{L_1 \utimes L_2} \cong \Cin{M_1\dtimes M_2}\cdot P_2^*\Sec{L_2}.
\end{equation}

Continuing with the theme of ``smearing'' linear spaces over a manifold, we define a \textbf{lvector bundle} $\epsilon:E^L\to M$ as a pair $(E,L)$ with $E$ a vector bundle and $L$ a line bundle over the same base $M$. The fibres are indeed identified with objects in the category of lvector spaces $E_x^{L_x}\in\LVect$ for all $x\in M$. A \textbf{morphism of lvector bundles} $F^B:E_1^{L_1}\to E_2^{L_2}$ is defined as a pair $(F,B)$ with $F:E_1\to E_2$ a vector bundle morphism and $B:L_1\to L_2$ a factor of lines covering the same smooth map between the bases. Clearly, the fibre-wise components of a morphism of lvector bundles are linear factors. We have thus identified the \textbf{the category of lvector bundles}, which will be denoted by $\LVect_\Man$.\newline

Lvector bundles appear naturally when derivations and jets, generalising vector fields and forms to the context of line bundles, are considered. Recall that the \textbf{derivations} $\Dr{L}$ of a line bundle $L$ are $\Real$-linear maps that interact with he module structure via the following Leibniz identity:
\begin{equation}
    D(f\cdot s)=f\cdot D(s) +X[f]\cdot s
\end{equation}
for $s\in \Sec{L}$ and $f\in\Cin{M}$. It is easy to see that derivations are the sections of an underlying vector bundle $\Der L$ that becomes a Lie algebroid with the commutator Lie bracket, this is called the \textbf{der bundle}. It turns out that the der bundle construction is functorial thus gives the unit-free generalization of the tangent functor:
\begin{equation}
    \Der : \Line_\Man \to \LVect_\Man.
\end{equation}
Furthermore, again in direct analogy with the ordinary tangent functor, the der construction is natural with respect to restriction to submanifolds and it preserves categorical products in the sense that there is an isomorphism of lvector bundles:
\begin{equation}
    \Der (L_1\utimes L_2)\cong \Der L_1 \boxplus \Der L_2.
\end{equation}
Similar to how differentials are local linear approximations of functions on ordinary manifolds, the unit-free counterpart of this idea is encapsulated in the standard definition of \textbf{jets} of sections: let a line bundle $\lambda: L\to M$, a point $x\in M$ and a neighbourhood $U\subset M$, the jet of a local section $s\in \Sec{L|_U}$ is the equivalence class of local sections:
\begin{equation}
    j_x^1s:=\{r\in \Sec{L|_U}\quad | \quad s(x)=r(x), \qquad \Tan_xs=\Tan_xr\}.
\end{equation}
Following standard constructions, it is easy to see that the \textbf{jet bundle} $\Jet^1 L$ corresponds to the ldual of the der bundle
\begin{equation}
    \Jet^1 L:=(\Der L)^*\otimes L = (\Der L)^{*L}.
\end{equation}
By construction, the assignment of the jet to a section gives a differential operator
\begin{equation}
    j^1:\Sec{L}\to \Sec{\Jet^1 L}
\end{equation}
called the \textbf{jet map}. The action of a section of the der bundle $a\in\Sec{\Der L}$ on a section $s\in\Sec{L}$ as a local derivation can now be rewritten as
\begin{equation}
    a[s]=j^1s(a)
\end{equation}
thus we see that the jet map $j^1$ gives the unit-free generalization of the exterior derivative in ordinary manifolds.

\subsection{Dimensioned Structures} \label{dimStruc}

The dimensioned formalism is motivated by the partial nature of the additive operation in the dimensional analysis of physical quantities; see \cite{zapata2021dimensioned} for a detailed discussion. Our goal is to define a theory of commutative algebra that incorporates sets where the additive operation is partially defined.\newline

A \textbf{dimensioned set} is simply a surjection of sets $\delta:A\to D$. We call $D$ the \textbf{set of dimensions}, $\delta$ the \textbf{dimensionality projection} and the preimages $A_d:=\delta^{-1}(d)\subset A$ the \textbf{dimension slices}. A \textbf{morphism of dimensioned sets} or \textbf{dimensioned map} $\Phi_\varphi:A_D\to B_E$ is simply a morphism of surjections, that is, a commutative diagram:
\begin{equation}
\begin{tikzcd}
A \arrow[r, "\Phi"] \arrow[d, "\delta"'] & B \arrow[d, "\epsilon"] \\
D \arrow[r, "\varphi"'] & E
\end{tikzcd}
\end{equation}
The \textbf{category of dimensioned sets} is denoted by $\textsf{DimSet}$. This category is entirely analogous to the ordinary category of sets \textsf{Set} with the obvious Cartesian monoidal structure.\newline

A \textbf{dimensional binar} structure $(A_D,*_D)$ is a partially-defined binary operation on $A$ with
\begin{equation}
    a *_d b \, \text{ defined only when } \, \delta(a)=\delta(a * b)=\delta(b)=d.
\end{equation}
This means that in the expression $a_d*_db_d$ all subscripts must agree for the product to be defined. In other words, a dimensional binar is a collection of ordinary binars indexed by the set of dimensions $\{(A_d,*_d), \, d\in D\}$. A \textbf{morphism of dimensional binars} $\Phi_\varphi:(A_D,*_D)\to (B_E,\circ_E)$ is a dimensioned map $\Phi_\varphi$ such that
\begin{equation}
    \forall \, d\in D, \, a,b\in A_d\qquad \Phi(a*_d b)=\Phi(a)\circ_{\varphi(d)} \Phi(b).
\end{equation}
The \textbf{category of dimensional binars} is denoted by $\textsf{DimBin}$ and the Cartesian monoidal structure extending from $\textsf{DimSet}$ in an obvious way.
Interestingly, in direct analogy with ordinary sets and binars, the sets of morphisms of dimensional binars carry natural dimensional binar structure: given two dimensioned maps $\Phi_\varphi, \Psi_\psi\in \textsf{DimSet}(A_D,B_E)$, for all $a_d\in A_d$ define
\begin{equation}
    \Phi_\varphi \circ \Psi_\psi (a_d):=\Phi(a)_{\varphi(d)} \circ_e \Psi(a)_{\psi(d)}
\end{equation}
which is indeed only possible when $\varphi(d)=e=\psi(d)$. Hence the point-wise operation so defined will endow the set of dimensioned maps with a dimensional binar structure:
\begin{equation}
    (\textsf{DimSet}(A_D,B_E)_{\textsf{Set}(D,E)},\,\circ_{\textsf{Set}(D,E)}).
\end{equation}

The theory of dimensional binars can be extended trivially to incorporate familiar algebraic notions: consider a dimensional binar $(A_D,*_D)$ and some abstract property of binars $P$ (e.g. associativity, commutativity, existence of identity, etc.), we will say that $(A_D,*_D)$ \textbf{satisfies property} $P$ simply when the binar slices $(A_d,*_d)$ satisfy the property $P$ for all $d\in D$. The theory of \textbf{dimensional groups} then follows naturally. All the familiar notions extend to the dimensioned context with the obvious caveats, for instance the identity element is now the identity subset of the identity elements of each slice. We will be focusing on \textbf{dimensional abelian groups} $(A_D,+_D)$. There is a natural notion of \textbf{product} of two dimensional groups $A_D$, $B_E$ given by the categorical product of dimensional binars $(A_D\times B_E, +_{D\times E})$. Furthermore, when we fix a dimension set $D$ and we consider \textbf{dimension-preserving morphisms}, i.e. dimensional group morphisms $\Phi:A_D\to B_D$ for which the induced map on the dimension sets is the identity $\Id_D:D\to D$, the dimensional abelian dimensional groups over $D$ form a subcategory $\textsf{DimAb}_D\subset\textsf{DimAb}$ that, in addition to the notions of subgroup, kernel and quotient, also admits a \textbf{direct sum} defined as $A_D\oplus_D B_D:=(A\times B)_D$ with partial multiplication given in the obvious way
\begin{equation}
    (a_d,b_d)+_d (a_d',b_d'):=(a_d+_d a_d',b_d +_d b_d').
\end{equation}
It is easy to prove that this direct sum operation on $\textsf{DimAb}_D$ acts as a product and coproduct for which the notions of kernel and quotient identified in the general category $\textsf{DimAb}$ satisfy the axioms of an abelian category. We call $\textsf{DimAb}_D$ the \textbf{category of $D$-dimensional abelian groups}. The dimension structure of a dimensioned set $\delta: S\to D$ allows for an obvious generalization of the free abelian group construction: the \textbf{dimensional free abelian group} on a dimensioned set $S_D$ is defined as the dimensional group
\begin{equation}
    \Int[S_D]:=\bigcup_{d\in D} \Int[A_d]
\end{equation}
where $\Int[\,\,]$ denotes the usual free abelian group construction. The dimensioned version of the free abelian group can be applied to the cartesian product of dimensional abelian groups to obtain the natural generalization of the tensor product for dimensional abelian groups: let two dimensional abelian groups $(A_D,+_D)$ and $(B_E,+_D)$, their \textbf{tensor product} is defined simply as follows:
\begin{equation}
    A_D\otimes B_E := \bigcup_{(d,e)\in D\times E} A_d\otimes B_e
\end{equation}
where $\otimes$ denotes the ordinary tensor product of abelian groups.\newline

A \textbf{dimensioned binar} structure $(A_D,*^D)$ is a totally-defined binary operation $*$ on $A$ with
\begin{equation}
    \forall \, d,d'\in D, \, \exists!\, d''\in D \qquad \text{such that} \qquad A_d* A_{d'}\subset A_{d''}.
\end{equation}
This condition is equivalent to the set of dimensions carrying a binar structure $(D,\,)$ (denoted by juxtaposition) and the dimension projection being a morphism of binars $\delta: (A,*)\to (D,\,)$. We thus write $a_d*b_e=(a*b)_{de}$, where the juxtaposition $de$ denotes de binar structure of the set of dimensions. A \textbf{morphism of dimensioned binars} between $(A_D,*^D)$ and $(B_E,\circ^E)$ is simply a dimensioned map $\Phi_\varphi:A_D\to B_E$ such that $\Phi:(A,*)\to (B,\circ)$ is a morphism of binars, since the binary operations are totally-defined. Note that this condition on $\Phi_\varphi$ makes $\varphi: (D,\,) \to (E,\,)$ into a morphism of binars.\newline

A \textbf{dimensioned ring} $(R_D,+_D,\cdot^D)$ is simply a dimensional abelian group $(R_D,+_D)$ with a dimensioned (totally-defined) monoid multiplication $(R_D,\cdot^D)$ such that the distributivity axiom:
\begin{equation}
    (a+b)\cdot c = a\cdot c + b\cdot c \qquad c\cdot (a+b) = c\cdot a + c\cdot b
\end{equation}
holds whenever it is defined for $a,b,c\in R$. The dimension set $D$ of a dimensioned ring $(R_D,+_D,\cdot^D)$ carries an associative unital binary operation $(D,\, )$, denoted by juxtaposition, such that $\delta: (R,\cdot)\to (D,\, )$ is a monoid morphism. Dimensioned rings will be assumed to be \textbf{commutative} unless otherwise stated. A \textbf{dimensioned field} is a dimensioned ring whose non-zero elements have multiplicative inverses. Let $(R_D,+_D,\cdot^D)$ and $(P_E,+_E,\cdot^E)$ be two dimensioned rings, a dimensioned map $\Phi:R_D\to P_E$ is called a \textbf{morphism of dimensioned rings} when
\begin{equation}
    \Phi(a\cdot b)=\Phi(a)\cdot \Phi(b), \qquad \Phi(1_R)=1_P
\end{equation}
for all $a,b\in R_D$. The map between the dimension monoids $\phi:D\to E$ is thus necessarily a monoid morphism. Dimensioned rings with these morphisms form the \textbf{category of dimensioned rings}, denoted by \textsf{DimRing}. A \textbf{unit} or \textbf{choice of units} $u$ in a dimensioned ring $R_D$ is a section of the dimension projection
\begin{equation}
\begin{tikzcd}[row sep=small]
R  \arrow[d, "\delta"'] \\
D \arrow[u, "u"', bend right=60] 
\end{tikzcd}\quad \delta \circ u =\Id_D,
\qquad \text{ such that } \quad
    u_{de}=u_d\cdot u_e \quad \text{and} \quad u_d\neq 0_d
\end{equation}
for all $d,e\in D$. In other words, a unit is a splitting $u:(D,\,)\to (R,\cdot)$ of the monoid surjection $\delta: (R,\cdot)\to (D,\,)$ with non-zero image. Units can be regarded as the dimensioned generalization of the notion of non-zero element of a ring with the caveat that they may not exist due to the non-vanishing condition being required for all of $D$. A dimensioned subring $I\subset R_D$ is called a \textbf{dimensioned ideal} if for all elements $a_d\in R_D$ and $i_e\in I$ we have
\begin{equation}
    a_d\cdot i_e\in I\cap R_{de}.
\end{equation}
Note that the zero $0_D\subset R$ is an ideal since the dimensioned ring axioms imply that it acts as an absorbent set in the following sense:
\begin{equation}
    0_d\cdot a_e=0_{de}.
\end{equation}

\begin{prop}[Quotient Dimensioned Ring] \label{QuotDimRing}
Let $(R_D,+_D,\cdot^D)$ be a dimensioned ring and and $I\subset R_D$ a dimensioned ideal, then the quotient dimensioned group $R/I$ carries a canonical dimensioned ring structure such that the projection map:
\begin{equation}
    q: R\to R/I
\end{equation}
is a morphism of dimensioned rings. This construction is called the \textbf{quotient dimensioned ring}.
\end{prop}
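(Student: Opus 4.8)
The plan is to mimic the classical construction of quotient rings, but carefully tracking the dimensional grading throughout. First I would establish that the quotient dimensional abelian group $R/I$ already exists as a $D$-dimensional abelian group, since $I \subset R_D$ is in particular a dimensional subgroup; this follows from the abelian category structure of $\textsf{DimAb}_D$ noted earlier in the excerpt, so the additive part $(R/I, +_D)$ and the projection $q$ as a morphism of dimensional groups come for free. The dimension projection on the quotient is the induced map $\bar{\delta}: R/I \to D$, and since $I$ meets each slice $R_d$ in a subgroup $I \cap R_d$, the slices of the quotient are the ordinary quotient groups $(R/I)_d \cong R_d/(I\cap R_d)$.

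Next I would define the multiplication on $R/I$ by $q(a)\cdot q(b) := q(a\cdot b)$ and verify well-definedness. This is where the \textbf{dimensioned ideal} condition $a_d \cdot i_e \in I \cap R_{de}$ does the essential work: if $q(a)=q(a')$ and $q(b)=q(b')$, then $a-a' \in I$ and $b-b' \in I$ (within the appropriate slices), and I would expand $a\cdot b - a'\cdot b' = (a-a')\cdot b + a'\cdot(b-b')$ using distributivity, observing that each summand lies in $I$ by the ideal absorption property. A subtlety worth flagging is that the additive manipulation $a \cdot b - a' \cdot b'$ only makes sense when the two products share a dimension; since $\delta(a)=\delta(a')=d$ and $\delta(b)=\delta(b')=e$, both $a\cdot b$ and $a'\cdot b'$ land in $R_{de}$, so the subtraction is legitimately defined and the computation stays inside a single slice. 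I would then confirm that $\bar{\delta}$ remains a monoid morphism onto $(D,\,)$, so that the quotient multiplication is genuinely dimensioned with the same dimension monoid $D$.

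Having the operation, I would check that the dimensioned ring axioms descend: associativity and unitality of multiplication follow immediately because $q$ is surjective and these identities hold upstairs, and distributivity descends likewise whenever the relevant sums are defined. The unit $1_{R/I} := q(1_R)$ works since $q$ preserves the grading and $\delta(1_R)$ is the identity of $D$. Finally, the statement that $q: R \to R/I$ is a morphism of dimensioned rings is then essentially by construction: it is a dimensioned map covering $\Id_D$, it respects multiplication by the very definition $q(a)\cdot q(b) = q(a\cdot b)$, and it sends $1_R$ to $1_{R/I}$.

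The main obstacle I anticipate is not any single hard computation but rather the bookkeeping needed to ensure that every additive equation I write is actually defined in the partial-addition sense of a dimensional abelian group. Unlike the classical case, I cannot freely add elements of different dimensions, so the well-definedness argument must stay confined to a fixed output slice $R_{de}$ at each step. Once I am disciplined about always working slice-by-slice and invoking the ideal condition to keep the error terms inside $I \cap R_{de}$, the rest reduces to the routine transfer of ring axioms through a surjection, exactly as in the undimensioned theory.
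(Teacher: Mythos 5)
Your proposal is correct and follows essentially the same route as the paper's proof: form the slice-wise quotient of dimensional abelian groups, define the product on representatives, use the ideal absorption property $a_d\cdot i_e\in I\cap R_{de}$ to check well-definedness (your difference computation $a\cdot b-a'\cdot b'=(a-a')\cdot b+a'\cdot(b-b')$ is the same coset expansion the paper writes as $(a_d+I_d)\cdot(b_e+I_e)=a_d\cdot b_e+I_{de}$), and conclude that $q$ is a morphism of dimensioned rings by construction. The only divergence is a boundary convention: you take the quotient to cover $\Id_D$ with slices $R_d/(I\cap R_d)$, which tacitly assumes $I$ meets every slice, whereas the paper assigns $R/I$ the (possibly smaller) dimension set $\delta(I)$; the two agree whenever $\delta(I)=D$, as happens in the paper's applications (e.g.\ the vanishing ideal of Proposition \ref{DimVanishingIdeal}).
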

\begin{proof}
Let us denote dimension slices of the ideal by $I_d:= I\cap R_d$. From the construction of quotient dimensioned group we see that the projection map $q: R\to R/I$ is explicitly given by
\begin{equation}
    a_d\mapsto a_d + I_d,
\end{equation}
which makes $R/I$ into a dimensioned abelian group with dimension set $\delta (I)\subset D$. The dimensioned ring multiplication on the quotient can be explicitly defined by:
\begin{equation}
    (a_d+_dI_d)\cdot (b_e+_eI_e)=a_d\cdot b_e +_{de} a_d\cdot I_e +_{de} b_e\cdot I_d +_{de} I_d\cdot I_e=a_d\cdot b_e +_{de} I_{de}.
\end{equation}
This is easily checked to be well-defined and to inherit all the dimensioned ring multiplication properties from $R_D$. The map $q$ is then a morphism of dimensioned rings by construction. Note that the quotient ring has dimension projection $\delta': R/I \to \delta(I)$ thus, in particular, $\delta(I)\subset D$ is a submonoid.
\end{proof}

\begin{prop}[Dimensionless Ring] \label{DimLessRing}
Let $(R_D,+_D,\cdot^D)$ be a dimensioned ring, then its dimensionless slice $R_1$ carries a natural ring structure induced from the partial addition defined on the slice $+_1$ and the restriction of the total multiplication $\cdot|_{R_1}$. This is called the \textbf{dimensionless ring} $(R_1,+_1,\cdot|_{R_1})$. Furthermore, morphisms, products and quotients of dimensioned rings induce their analogous counterparts for dimensionless rings.
\end{prop}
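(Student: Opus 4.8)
The plan is to verify directly that the three ring axioms hold on the slice $R_1$, and then to check that each categorical construction (morphism, product, quotient) restricts compatibly to dimensionless slices. The crucial structural observation underlying everything is that the distinguished dimension $1\in D$ is the identity of the associative unital monoid $(D,\,)$, so that $1\cdot 1 = 1$.

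First I would establish the ring structure on $R_1$. The additive abelian group structure is immediate: by hypothesis $(R_D,+_D)$ is a dimensional abelian group, so each slice $(R_d,+_d)$, and in particular $(R_1,+_1)$, is an ordinary abelian group. For multiplication, closure is exactly where the monoid identity is used: given $a,b\in R_1$ we have $\delta(a)=\delta(b)=1$, so $a\cdot b\in R_{1\cdot 1}=R_1$, and thus $\cdot|_{R_1}$ is a totally-defined binary operation on $R_1$. Associativity, and commutativity under the standing commutativity convention, are inherited verbatim from the global multiplication $\cdot^D$. The multiplicative unit lands in the correct slice because $\delta:(R,\cdot)\to(D,\,)$ is a monoid morphism, whence $\delta(1_R)=1$ and $1_R\in R_1$ acts as the identity for $\cdot|_{R_1}$. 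Finally, distributivity is automatic: the dimensioned ring axiom only asserts $(a+b)\cdot c=a\cdot c+b\cdot c$ whenever it is defined, but for $a,b,c\in R_1$ every term appearing lives in $R_1$, so all the required sums are defined and the identity holds unconditionally. This assembles $(R_1,+_1,\cdot|_{R_1})$ into an ordinary commutative unital ring.

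For the functoriality clauses I would exploit that every structure map on dimensions preserves the monoid identity. Given a morphism $\Phi_\varphi:R_D\to P_E$, the induced map $\varphi:(D,\,)\to(E,\,)$ is a monoid morphism, so $\varphi(1_D)=1_E$; hence $\Phi$ maps $R_1=\delta^{-1}(1_D)$ into $P_1=\epsilon^{-1}(1_E)$, and the restriction $\Phi|_{R_1}:R_1\to P_1$ preserves multiplication and the unit by the defining properties of $\Phi$, and preserves addition since $\Phi$ is in particular a morphism of the underlying dimensional abelian groups. For products, the dimension monoid of $R_D\times P_E$ is $D\times E$ with identity $(1_D,1_E)$, and its dimensionless slice is $(R\times P)_{(1_D,1_E)}=R_1\times P_1$ with the componentwise operations, which is precisely the product ring of the dimensionless rings.

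For quotients I would invoke Proposition \ref{QuotDimRing}: the quotient $R/I$ is a dimensioned ring over the submonoid $\delta(I)\subset D$, which contains $1$. Writing $I_1:=I\cap R_1$, the dimensioned ideal condition specialised to $d=e=1$ gives $R_1\cdot I_1\subset I\cap R_{1}=I_1$, so $I_1$ is an ordinary ideal of the ring $R_1$, and the dimensionless slice of the quotient is $(R/I)_1=R_1/I_1$, the ordinary quotient ring. I do not anticipate a genuine obstacle; the entire argument is bookkeeping organised around the single fact $1\cdot 1 = 1$. The one point demanding care is the morphism clause, where one must use that a morphism of dimensioned rings is in particular a morphism of the underlying dimensional abelian groups, and hence slice-wise additive, in order to conclude that $\Phi|_{R_1}$ respects $+_1$; this is the only place where additivity of morphisms, rather than just their multiplicative compatibility, is invoked.
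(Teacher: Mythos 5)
Your proof is correct and follows essentially the same route as the paper's: both hinge on the single observation that $1\in D$ is the monoid identity (giving closure of $R_1$ under multiplication and unconditional distributivity), followed by routine slice-wise verification for morphisms, products and quotients. Your version is simply more explicit than the paper's terse argument, in particular in deriving $\varphi(1_D)=1_E$ for the morphism clause and $R_1\cdot I_1\subset I_1$ for the quotient clause, both of which the paper leaves implicit.
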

\begin{proof}
The fact that $(R_1,+_1,\cdot|_{R_1})$ is a ring stems simply from the fact that $R_1$ is closed under multiplication, since $1\in D$ is the monoid identity. From a similar reasoning we see that for any dimensioned ideal $I\subset R_D$ the dimensionless slice $I_1:= I\cap R_1$ is an ideal of the dimensionless ring. Let two dimensioned rings $(R_D,+_D,\cdot^D)$ and $(P_E,+_E,\cdot^E)$. Since a morphism $\Phi: R_D \to P_E$ preserves the multiplicative identities, it is clear that the restriction
\begin{equation}
    \Phi|_{R_1}:(R_1,+_1,\cdot|_{R_1})\to (P_{\phi(1)},+_{\phi(1)},\cdot|_{P_{\phi(1)}})
\end{equation}
is a morphism of rings. The dimensionless slice of the dimensioned product $R_D \times P_E$ is indeed $R_1\times P_1$ with the direct product construction applying to rings in a straightforward manner.
\end{proof}
This shows that dimensioned rings are, in fact, a strict generalization of ordinary rings since we recover them by considering trivial dimension monoids, i.e. singleton dimension sets. More precisely, the category of rings is a subcategory of the category of dimensioned rings $\textsf{Ring}\subset \textsf{DimRing}$.\newline

Once the theory of dimensioned rings is established, the notions of dimensioned modules and algebras follow easily. We summarise the main definitions in the remainder of this section, for more details see \cite[Sec. 5,6]{zapata2021dimensioned}. A dimensional abelian group $(A_D,+_D)$ is called a \textbf{dimensioned module} over the dimensioned ring $R_G$ if there is a map 
\begin{equation}
    \cdot :R_G\times A_D \to A_D
\end{equation}
that is compatible with the dimensioned structures via a monoid action $G\times D\to D$ (denoted by juxtaposition) in the following sense
\begin{equation}
    r_g\cdot a_d=(r \cdot a)_{gd}
\end{equation}
and that satisfies the following axioms
\begin{itemize}
    \item[1)] $r_g\cdot (a_d+b_d)=r_g\cdot a_d + r_g\cdot b_d$,
    \item[2)] $(r_g+p_g)\cdot a_d=r_g\cdot a_d + p_g \cdot a_d$,
    \item[3)] $(r_g p_h)\cdot a_d=r_g\cdot (p_h\cdot a_d)$,
    \item[4)] $1\cdot a_d=a_d$
\end{itemize}
for all $r_g,p_h\in R_G$ and $a_d,b_d\in A_D$. All the familiar notions of ordinary modules extend to the dimensioned context provided the obvious caveats. Particularly, the \textbf{tensor product} now requires some extra considerations on the monoid structures of the sets of dimensions: we construct the tensor product of the underlying dimensional abelian groups and quotient by the dimensioned ring action
\begin{equation}
    A_D\otimes_{R_G} B_E := (A_D \otimes B_E)/\sim_{R_G}
\end{equation}
where $\sim_{R_G}$ is defined in the obvious way:
\begin{equation}\label{tensorquorel}
    (r_g\cdot a_d,b_e)\sim_{R_G} (a_d,r_g\cdot b_e)
\end{equation}
for all $r_g\in R_G$, $a_d\in A_D$ and $b_e\in B_E$. By construction, $A_D\otimes_{R_G} B_E$ is a dimensional abelian group with dimension set given by a peculiar quotient of the product of dimension sets:
\begin{equation}
    D \times^G E := (D \times E)/\sim_G
\end{equation}
where $\sim_G$ is induced from $\sim_{R_G}$ above in the obvious way:
\begin{equation}
    (g d,e)\sim_G (d,g e),
\end{equation}
for all $g\in G$, $d\in D$, $e\in E$, making use of the monoid actions. It is also possible to define quotients of dimensioned modules.

\begin{prop}[Quotient Dimensioned Module] \label{QuotMod}
Let $(A_D,+_D)$ be a dimensioned $R_G$-module and $I\subset R_G$ an ideal, then, if $S\subset A_D$ is a submodule such that $I\cdot A\subset S$, the quotient $A_D/S$ inherits a dimensioned $R_G/I$-module structure such that the projection map
\begin{equation}
    Q: A_D\to A_D/S
\end{equation}
is a $q$-linear map, where $q:R_G\to R_G/I$ is the quotient ring projection.
\end{prop}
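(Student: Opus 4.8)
The plan is to follow the template of Proposition~\ref{QuotDimRing} closely, transcribing the classical construction of the quotient module $M/N$ as an $R/I$-module into the dimensioned setting. First I would invoke the quotient dimensional abelian group structure on $A/S$ already available from the theory of $\textsf{DimAb}$: writing $S_d := S\cap A_d$, each slice is the ordinary quotient group $A_d/S_d$, the dimension set is $\delta(S)$, and the projection $Q:A_D\to A_D/S$ acts slice-wise as $a_d\mapsto a_d+S_d$, making it a morphism of dimensional abelian groups by construction. On this group I would then define the $R_G/I$-action by the explicit formula
\[
    (r_g+I_g)\cdot (a_d+S_d) := (r\cdot a)_{gd} + S_{gd},
\]
mirroring the formula used for the quotient ring multiplication in Proposition~\ref{QuotDimRing}.

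The heart of the argument is well-definedness, and this is precisely where the two hypotheses enter, one for each factor. Changing the representative of $r_g$ by some $i_g\in I_g$ gives $(r_g+i_g)\cdot a_d = r_g\cdot a_d + i_g\cdot a_d$, and the term $i_g\cdot a_d$ lies in $S$ exactly because $I\cdot A\subset S$; changing the representative of $a_d$ by some $s_d\in S_d$ gives $r_g\cdot(a_d+s_d)=r_g\cdot a_d + r_g\cdot s_d$, and the term $r_g\cdot s_d$ lies in $S$ because $S$ is a submodule (and the mixed term $i_g\cdot s_d$ lies in $S$ for either reason). Hence the class $(r\cdot a)_{gd}+S_{gd}$ depends only on the two input classes, so the action is well-defined.

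The remaining verifications are bookkeeping that descends slice-wise from $A_D$, exactly as distributivity descended in the ring case: the four dimensioned-module axioms hold on representatives in $A_D$ and are therefore inherited by their classes, while $q$-linearity of $Q$, i.e. $Q(r_g\cdot a_d)=q(r_g)\cdot Q(a_d)$, is immediate from the defining formula. I expect the only genuinely delicate point to be the dimension bookkeeping: I must check that the monoid action $G\times D\to D$ descends to an action $\delta(I)\times\delta(S)\to\delta(S)$ of the dimension monoid of $R_G/I$ on the dimension set of $A_D/S$. This follows by observing that for $g\in\delta(I)$ and $d\in\delta(S)$ one picks $i_g\in I_g$ and $a_d\in A_d$ with $i_g\cdot a_d\in S_{gd}$, so that $gd\in\delta(S)$; this guarantees that every product $(r\cdot a)_{gd}+S_{gd}$ lands in an existing slice of the quotient and that the partial additions on those slices are respected, completing the identification of $A_D/S$ as a dimensioned $R_G/I$-module.
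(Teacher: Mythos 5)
Your proposal is correct and follows essentially the same route as the paper's own proof: the central step in both is expanding $(r_g+i_g)\cdot(a_d+s_d)$ on representatives and observing that the cross terms lie in $S$ --- one pair because $S$ is a submodule, the other because $I\cdot A\subset S$ --- so that the $R_G/I$-action on $A_D/S$ is well-defined and $Q(r_g\cdot a_d)=q(r_g)\cdot Q(a_d)$ holds by construction. Your additional check that the monoid action descends to $\delta(I)\times\delta(S)\to\delta(S)$ is a detail the paper leaves implicit, but it does not change the argument.
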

\begin{proof}
Since quotients of rings and modules are taken as dimensional abelian groups, this result follows from a simple computation showing the distributivity property of the ideal submodule $S$: let $r_g\in R_G$, $i_g\in I$, $a_d\in A_D$ and $s_d\in S$ then
\begin{equation}
    (r_g+i_g)\cdot (a_d+s_d) = r_g \cdot a_d + r_g \cdot s_d + i_g \cdot a_d + i_g \cdot s_d.
\end{equation}
The second and fourth terms are in $S$ from the fact that $S$ is a submodule and the third term is in $S$ from the ideal submodule condition $I\cdot A\subset S$, then the above expression defines the $R_G/I$-module structure on $A_D/S$ which, by construction, satisfies $Q(r_g\cdot a_d)=q(r_g)\cdot Q(a_d)$.
\end{proof}

Let $(A_D,+_D)$ be a dimensioned $R_G$-module, a map $M:A_D\times A_D\to A_D$ is called a \textbf{dimensioned bilinear multiplication} if it satisfies
\begin{align}
    M(a_d \, +_d\, b_d,c_e)&=M(a_d,c_e) \,+_{\mu(d,e)} \, M(b_d,c_e)\\
    M(a_d,b_e \, +_e \, c_e)&=M(a_d,b_e) \, +_{\mu(d,e)} \, M(a_d,c_e)\\
    M(r_g\cdot a_d,s_h\cdot b_e)&=r_g\cdot s_h\cdot M(a_d, b_e)
\end{align}
for all $a_d,b_d,b_e,c_e\in A_D$, $r_g,s_h\in R_G$ and for a \textbf{dimension map} $\mu:D\times D\to D$ which is $G$-equivariant in both entries, i.e.
\begin{equation}
    \mu(gd,he)=gh\mu(d,e)
\end{equation}
for all $g,h\in G$ and $d,e\in D$. When such a map $M$ is present in a dimensioned $R_G$-module $A_D$, the pair $(A_D,M)$ is called a \textbf{dimensioned $R_G$-algebra}. Note that the tensor product of dimensioned modules allows to reformulate the definition of a dimensioned bilinear multiplication as a dimensioned $R_G$-linear morphism
\begin{equation}
    M:A_D\otimes_{R_G} A_D \to A_D.
\end{equation}
A dimensioned algebra multiplication $M:A_G\times A_G\to A_G$ is said to be \textbf{homogeneous of dimension $m$} if the dimension map $\mu:G\times G\to G$ is given by monoid multiplication with the element $m\in G$, i.e. $\mu(g,h)=mgh$ for all $g,h\in G$. Assuming a monoid structure on the dimension set of a dimensioned module and considering dimensioned algebra multiplications of homogeneous dimension is particularly useful in order to study several algebra multiplications coexisting on the same set. Indeed, given two homogeneous dimensioned algebra multiplications $(A_G,M_1)$ and $(A_G,M_2)$ with dimensions $m_1\in G$ and $m_2\in G$, respectively, the fact that the monoid operation is assumed to be associative and commutative, allows us to postulate algebraic identities involving expressions of the form $M_1(M_2(a,b),c)$ without any further requirements. Examples of such properties are \textbf{symmetry}, \textbf{antisymmetry}, \textbf{associativity} and \textbf{Jacobi}, which in the usual combinations give the obvious definitions of \textbf{dimensioned commutative algebras} and \textbf{dimensioned Lie algebras}. Similar to the case of ordinary Lie algebras, we find the natural notion of \textbf{Casimir element} in a dimensioned Lie algebra as a distinguished element of the center.\newline

Note that a generic (commutative) dimensioned ring $(R_G,+_G,\cdot^G)$ provides an example of a dimensioned commutative algebra itself: the dimensional abelian group structure is the same $(R_G,+_G)$ and the module structure is given by multiplication with the dimensionless subring $R_1\subset R_G$, then the ring multiplication becomes clearly bilinear (from associativity and commutativity) and it has definite dimension $1\in G$ as a dimensioned algebra multiplication. Note that the $R_1$-module structure has dimension monoid action the trivial action of $\{1\}$ on $G$, thus we see that $R_G$ as a dimensioned commutative algebra has an ordinary multiplicative module structure over an ordinary ring.\newline

Another important source of examples of dimensioned algebras connected to dimensioned rings are derivations. By analogy with the case of ordinary ring derivations a \textbf{dimensioned ring derivation} is a dimensioned map $\Delta\in \Dim{R_G}$ covering a dimension map $\delta:G\to G$ satisfying the Leibniz identity with respect to the dimensioned ring multiplication
\begin{equation}
    \Delta(r_g\cdot s_h)= \Delta(r_g)\cdot s_h + r_g\cdot\Delta(s_h),
\end{equation}
for all $r_g,s_h\in R_G$, however, for the right-hand-side to be well-defined, both terms must be of homogeneous dimension, which means that the dimension map must satisfy
\begin{equation}
    \delta(gh)=\delta(g)h=g\delta(h)
\end{equation}
for all $g,h\in G$. Since $G$ is a monoid, this condition is equivalent to the dimension map being given by multiplication with a monoid element, i.e. $\delta=d$ for some element $d\in G$. It follows from this observation that there is a natural dimensioned submodule of the dimensioned module of dimensioned maps $\Dim{R_G}_G\subset \Dim{R_G}_{\text{Map(G)}}$. Recall that dimensioned rings are assumed to be commutative and, thus, the dimension monoid has a commutative binary operation. This allows for the definition the commutator of the associative dimensioned composition:
\begin{equation}\label{dimcomm}
    [\Delta,\Delta']:=\Delta \circ \Delta'-\Delta' \circ \Delta,
\end{equation}
is easily checked to be antisymmetric and Jacobi, thus making $(\Dim{R_G}_G,[,])$ into the \textbf{dimensioned Lie algebra of dimensioned maps} of a dimensioned ring $R_G$. Notice that this commutator bracket can only be defined on the dimensioned submodule $\Dim{R_G}_G\subset \Dim{R_G}_{\text{Map(G)}}$. This motivates the definition of the dimensioned Lie algebra of \textbf{derivations of a dimensioned ring} $R_G$ as the natural dimensioned Lie subalgebra of the dimensioned maps:
\begin{equation}
    \Dr{R_G}\subset (\Dim{R_G}_G,[,]).
\end{equation}
Derivations covering the identity dimension map $\Id_G:G\to G$ are called \textbf{dimensionless derivations} and it is clear by definition that they form an ordinary Lie algebra with the commutator bracket $(\Dr{R_G}_{\Id_G}, [,])$.

\subsection{Dimensioned Poisson Algebras} \label{dimPoisson}

Let us present here the definition and main results of the dimensioned analogue of Poisson algebras. Let $A_G$ be a dimensioned $R_H$-module and let two dimensioned algebra multiplications $*:A_G\times A_G\to A_G$ and $\{\,,\}:A_G\times A_G\to A_G$ with homogeneous dimensions $p\in G$ and $b\in G$, respectively, the triple $(A_G,*_p,\{\,,\}_b)$ is called a \textbf{dimensioned Poisson algebra} if
\begin{itemize}
    \item[1)] $(A_G,*_p)$ is a dimensioned commutative algebra,
    \item[2)] $(A_G,\{\,,\}_b)$ is a dimensioned Lie algebra,
    \item[3)] the two multiplications interact via the Leibniz identity
    \begin{equation}
        \{a,b*c\}=\{a,b\}*c+b*\{a,c\},
    \end{equation}
    for all $a,b,c\in A_G$.
\end{itemize}
Note that the Leibniz condition can be consistently demanded of the two dimensioned algebra multiplications since the dimension projections of each of the terms of the Leibniz identity for $\{a_g,b_h*c_k\}$ are: 
\begin{equation}
    bgphk,\quad pbghk,\quad phbgk,
\end{equation}
which are all indeed equal from the fact that the monoid binary operation is associative and commutative.\newline

A morphism of dimensioned modules between dimensioned Poisson algebras $\Phi:(A_G,*_p,\{\,,\}_b)\to (B_H,*_r,\{\,,\}_c)$ is called a \textbf{morphism of dimensioned Poisson algebras} if $\Phi:(A_G,*_p)\to (B_H,*_r)$ is a morphism of dimensioned commutative algebras and also $\Phi:(A_G,\{\,,\}_b)\to (B_H,\{\,,\}_c)$ is a morphism of dimensioned Lie algebras. A submodule $I\subset A_G$ that is a dimensioned ideal in $(A_G,*_p)$ and that is a dimensioned Lie subalgebra in $(A_G,\{\,,\}_b)$ is called a \textbf{dimensioned coisotrope}. The category of dimensioned Poisson algebras is denoted by $\textsf{DimPoissAlg}$.\newline

Dimensioned Poisson algebras admit the all-important constructions of reductions and products, which turn out to be direct generalizations of their ordinary counterparts for Poisson algebras.

\begin{prop}[Dimensioned Poisson Reduction] \label{DimensionedPoissonReduction}
Let $(A_G,*_p,\{\,,\}_b)$ be a dimensioned Poisson algebra and $I\subset A_G$ a coisotrope, then there is a dimensioned Poisson algebra structure induced in the subquotient
\begin{equation}
    (A'_G:=N(I)/I,*'_p,\{\,,\}'_b)
\end{equation}
where $N(I)$ denotes the dimensioned Lie idealizer of $I$ regarded as a submodule of the dimensioned Lie algebra.
\end{prop}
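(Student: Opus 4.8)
The plan is to transcribe the classical coisotropic reduction of Poisson algebras into the dimensioned setting, carrying the dimension labels through every step so that the induced operations remain homogeneous. Throughout one uses that $\{\,,\}$ is $R_H$-bilinear as a dimensioned algebra multiplication while simultaneously being a derivation of $*$ via the Leibniz axiom, and hence, by antisymmetry, a derivation in each slot.

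First I would check that the Lie idealizer
\[ N(I) = \{ a \in A_G \mid \{a, i\} \in I \text{ for all } i \in I \} \]
is a dimensioned $R_H$-submodule containing $I$. Containment is immediate since $I$ is a dimensioned Lie subalgebra. Because $\{a_g, i_d\}$ carries dimension $bgd$, the idealizer condition is imposed slice-by-slice, so $N(I)$ inherits the grading $N(I) = \bigcup_g N(I)_g$; closure under the partial addition and under the $R_H$-action then follows from the $R_H$-bilinearity of the bracket together with $I$ being a submodule.

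Next I would show that $N(I)$ is closed under both products. For the bracket, given $a, b \in N(I)$ and $i \in I$, Jacobi yields $\{\{a,b\}, i\} = \{a, \{b,i\}\} - \{b, \{a,i\}\}$; both $\{b,i\}$ and $\{a,i\}$ lie in $I$, and since $a,b$ normalise $I$ the two outer brackets land in $I$, so $\{a,b\} \in N(I)$. For $*$, antisymmetry together with Leibniz gives $\{a * b, i\} = -(\{i,a\} * b + a * \{i,b\})$; each summand lies in $I$ because $\{i,a\}, \{i,b\} \in I$ and $I$ is a $*$-ideal of $A_G$, whence $a * b \in N(I)$. In each identity the homogeneous dimensions $p$ and $b$, combined with the associativity and commutativity of the monoid $G$, force the dimension slices on the two sides to coincide. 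Consequently $I$ sits inside $N(I)$ both as a $*$-ideal (inherited from being an ideal in all of $A_G$) and as a Lie ideal (this is precisely the defining condition $\{N(I), I\} \subset I$).

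With this in place I would descend the structure to the subquotient. Applying Proposition \ref{QuotMod}, together with the ring-level reasoning of Proposition \ref{QuotDimRing} for the ideal conditions, to the pair $I \subset N(I)$ produces well-defined dimensioned multiplications $*'_p$ and $\{\,,\}'_b$ on $A'_G = N(I)/I$ of the same homogeneous dimensions $p$ and $b$, with the projection $N(I) \to N(I)/I$ intertwining the operations. The three Poisson axioms --- commutativity of $*'$, the Lie axioms for $\{\,,\}'$, and the Leibniz compatibility --- then descend at once, since each already holds in $A_G$ and the projection is a morphism for both products. The step I expect to be most delicate is not any single algebraic identity, which becomes routine once the normaliser is set up, but rather confirming that the idealizer is genuinely compatible with the dimensional grading and that the subquotient retains a dimension set over which $*'_p$ and $\{\,,\}'_b$ stay homogeneous; this bookkeeping is the one place where the dimensioned argument departs substantively from its classical model.
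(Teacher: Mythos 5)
Your proposal is correct and follows essentially the same route as the paper: define the dimensioned Lie idealizer $N(I)$, show it is closed under $*$ via the Leibniz identity and under the bracket (the paper asserts this Lie-subalgebra property without writing out the Jacobi computation you supply), note that $I$ sits in $N(I)$ as both a $*$-ideal and a Lie ideal, and then descend both multiplications to $N(I)/I$ using the quotient constructions of Propositions \ref{QuotDimRing} and \ref{QuotMod}. The only stylistic difference is that the paper writes the quotient bracket $\{n_g+I_g,m_h+I_h\}':=\{n_g,m_h\}+I_{bgh}$ explicitly and handles your final dimension-bookkeeping concern by assuming without loss of generality that $I$ projects onto all of $G$.
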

\begin{proof}
We assume without loss of generality that the dimension projection of $I$ is the whole of $G$, the intersections with the dimension slices are denoted by $I_g:=I\cap A_g$. The dimensioned Lie idealizer is defined in the obvious way:
\begin{equation}
    N(I):=\{n_g\in A_G|\quad \{n_g,i_h\}\in I_{bgh} \quad \forall i_h\in I\}.
\end{equation}
Note that $N(I)$ is the smallest dimensioned Lie subalgebra that contains $I$ as a dimensioned Lie ideal. The Leibniz identity implies that $N(I)$ is a dimensioned commutative subalgebra with respect to $*_p$ in which $I$ sits as a dimensioned commutative ideal, since it is a commutative ideal in the whole $A_G$. Following a construction analogous to the quotient dimensioned ring of Proposition \ref{QuotDimRing} we can form the dimensioned quotient commutative algebra $(N(I)/I,*')$ whose underlying module is the quotient dimensioned module of Proposition \ref{QuotMod}. Note that the only difference with the case of the quotient dimensioned ring construction is that the commutative multiplication $*_p$ covers a dimension map that is given by the monoid multiplication with a non-identity element $p\in G$, but this has no effect on the quotient construction itself. To obtain the desired quotient dimensioned Lie bracket we set:
\begin{equation}
    \{n_g+I_g,m_h+I_h\}':=\{n_g,m_h\}+I_{bgh}
\end{equation}
which is easily checked to be well-defined and that inherits the antisymmetry and Jacobi properties directly from dimensioned Lie bracket $\{\,,\}$ and the fact that $I\subset N(I)$ is a dimensioned Lie ideal.
\end{proof}

\begin{prop}[Heterogeneous Dimensioned Poisson Product] \label{DimensionedPoissonProductHetero}
Let $(A_G,*_g,\{\,,\}_g)$ and $(B_H,*_h,\{\,,\}_h)$ be dimensioned Poisson algebras over a dimensioned ring $R_P$, then the tensor product $A_G \otimes_{R_P} B_H$ carries a natural Poisson structure defined by linearly extending the following commutative product and Lie bracket:
\begin{align}
    (a\otimes b) *_{gh} (a' \otimes b) &:= a *_g a' \otimes b *_h b'\\
    \{a \otimes b, a' \otimes b'\}_{gh} &:= \{ a, a'\}_g \otimes b *_h b' + a*_g a'\otimes \{b,b'\}_h.
\end{align}
\end{prop}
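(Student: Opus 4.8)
The plan is to define both operations on simple tensors by the stated formulas, extend them bilinearly, and then verify in turn that they (i) descend to the tensor module $A_G\otimes_{R_P}B_H$, (ii) are homogeneous dimensioned algebra multiplications, and (iii) satisfy the commutative, Lie and Leibniz axioms. A preliminary structural point I would record first is that $G\times^P H$ is itself a monoid: the congruence $\sim_P$ generated by $(p\,d,e)\sim_P(d,p\,e)$ is compatible with the product monoid $G\times H$, because the ring actions respect the monoid multiplications of $G$ and $H$ (this compatibility is exactly the $P$-equivariance built into the homogeneity of the factor operations), so multiplying a generating relation by any $(d',e')$ yields another instance of it. The quotient monoid $G\times^P H$, with identity $[(1_G,1_H)]$, is what allows one to speak of \emph{homogeneous} multiplications on $A_G\otimes_{R_P}B_H$ at all, and a short computation shows that both proposed operations are homogeneous of dimension $[(g,h)]$, the dimension map being $\mu\big([(d_1,e_1)],[(d_2,e_2)]\big)=[(g,h)]\cdot[(d_1 d_2,e_1 e_2)]$, matching the subscript $gh$.

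Next I would establish well-definedness. Since the operations are prescribed on simple tensors, I must check that they respect the relations defining $A_G\otimes_{R_P}B_H$, namely additivity in each slot and the balancing relation $\sim_{R_P}$ analogous to \eqref{tensorquorel}. Additivity is immediate from the biadditivity of each factor multiplication, and the balancing relation follows from the $R_P$-bilinearity axiom $M(r\cdot a,s\cdot b)=r\cdot s\cdot M(a,b)$ of dimensioned algebra multiplications; for the commutative product, for instance, $((r\cdot a)*_g a')\otimes(b*_h b')=r\cdot\big((a*_g a')\otimes(b*_h b')\big)=(a*_g a')\otimes\big((r\cdot b)*_h b'\big)$, and the same identity holds for each of the two summands defining the bracket. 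The crucial check at this stage is that those two summands carry the \emph{same} dimension: for $a\in A_{d_1},a'\in A_{d_2},b\in B_{e_1},b'\in B_{e_2}$ both $\{a,a'\}_g\otimes(b*_h b')$ and $(a*_g a')\otimes\{b,b'\}_h$ land in dimension $[(g\,d_1 d_2,\,h\,e_1 e_2)]$, precisely because within each factor the commutative product and the Lie bracket share their homogeneous dimension ($g$ for $A_G$, $h$ for $B_H$). This is where the hypothesis is genuinely used: without it the partial addition defining $\{\,,\}_{gh}$ would be undefined.

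With the operations well-defined, the dimensioned commutative algebra axioms reduce slotwise to those of the factors: commutativity and associativity of $*_{gh}$ follow at once from commutativity and associativity of $*_g$ and $*_h$, making $(A_G\otimes_{R_P}B_H,*_{gh})$ a dimensioned commutative algebra. For the Lie structure, antisymmetry of $\{\,,\}_{gh}$ is a one-line consequence of the antisymmetry of $\{\,,\}_g,\{\,,\}_h$ together with the commutativity of $*_g,*_h$. The two substantive verifications are the Jacobi identity for $\{\,,\}_{gh}$ and the Leibniz identity relating $\{\,,\}_{gh}$ to $*_{gh}$; both are formally identical to the classical computation for the product Poisson bracket $\{\,,\}\otimes * + *\otimes\{\,,\}$. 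Expanding the cyclic sum of $\{a\otimes b,\{a'\otimes b',a''\otimes b''\}_{gh}\}_{gh}$, the ``pure'' contributions cancel by the Jacobi identities of $\{\,,\}_g$ and $\{\,,\}_h$, while the ``mixed'' contributions cancel by the Leibniz identities of each factor combined with the commutativity and associativity of $*_g,*_h$; the Leibniz identity for the tensor bracket unwinds in the same manner.

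The step I expect to require the most care is not the algebraic cancellations themselves but the dimensional bookkeeping that makes them legal in the dimensioned setting. Every intermediate expression produced in the Jacobi and Leibniz computations must be shown to be dimensionally homogeneous, so that the partially-defined additions are actually defined, and every reassociation or transposition of factors must be tracked through the quotient monoid $G\times^P H$. Here the associativity and commutativity of the dimension monoids $G$ and $H$ (and hence of $G\times^P H$), together with the single homogeneity assumption that product and bracket share a dimension in each factor, guarantee that all the subscripts appearing in the classical cancellation coincide; once this is confirmed the classical manipulations transfer verbatim and the three Poisson axioms hold.
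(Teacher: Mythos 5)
Your proposal is correct and follows essentially the same route as the paper's proof: defer the Jacobi/Leibniz cancellations to the classical Poisson-product computation and concentrate on the dimensioned bookkeeping, namely the monoid structure on $G\times^P H$ and the fact that both summands of the bracket project to the same dimension $(g\otimes_P h)\cdot(x\otimes_P u)\cdot(y\otimes_P v)$ precisely because the commutative product and the Lie bracket are assumed to share their homogeneous dimension in each factor. Your extra verification that the operations descend through the balancing relation $\sim_{R_P}$ is a detail the paper leaves implicit, but it does not constitute a different approach.
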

\begin{proof}
From an algebraic point of view, this construction is entirely analogous to the ordinary product of Poisson algebras (see for instance \cite{fernandes2014lectures}), which is in fact explicitly recovered for trivial dimension monoids $G=H=\{1\}$. Then, it suffices to show that the two proposed expressions for are well-defined dimensioned algebras. First note that the tensor product of dimensioned monoids carries a natural monoid structure defined by:
\begin{equation}
    (g \otimes_P h ) \cdot (g' \otimes_P h') := gg' \otimes_P hh'.
\end{equation}
Taking the dimension projection of the commutative product we obtain:
\begin{equation}
    \delta(a_x *_g a_y' \otimes_{R_P} b_u *_h b_v') = gxy \otimes_P huv = ( g \otimes_P h ) \cdot (x \otimes_P u) \cdot (y \otimes_P v),
\end{equation}
hence $*_{gh}$ is a well-defined commutative multiplication of dimension $g \otimes_P h\in G \times^P H$. For the bracket $\{\,,\,\}_{gh}$ to be well-defined both terms need to have equal dimension projection, this is indeed the case since the dimension of the bracket and commutative products are assumed to be equal:
\begin{equation}
    \delta (\{ a_x, a_y'\}_g \otimes b_u *_h b_v') = (g \otimes_P h)\cdot (x \otimes_P u) \cdot (y \otimes_P v) = \delta (a_x*_g a_y'\otimes \{b_u,b_v'\}_h).
\end{equation}
\end{proof}

\begin{prop}[Homogeneous Dimensioned Poisson Product] \label{DimensionedPoissonProductHomo}
Let $(A_G,*_p,\{\,,\}_b)$ and $(B_G,*_q,\{\,,\}_c)$ be dimensioned Poisson algebras over a dimensioned ring $R_G$ such that $bq=pc$, then the tensor product $A_G \otimes_{R_G} B_G$ carries a natural Poisson structure defined by linearly extending the following commutative product and Lie bracket:
\begin{align}
    (a\otimes b) *_{pq} (a' \otimes b) &:= a *_p a' \otimes b *_q b'\\
    \{a \otimes b, a' \otimes b'\}_{pc} &:= \{ a, a'\}_b \otimes b *_q b' + a*_p a'\otimes \{b,b'\}_c.
\end{align}
\end{prop}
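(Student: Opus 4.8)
The plan is to split the verification cleanly into an algebraic part and a dimensional part. The algebraic part---commutativity of $*_{pq}$, antisymmetry and Jacobi for $\{\,,\}_{pc}$, and the Leibniz compatibility between them---is formally identical to the classical tensor product of Poisson algebras and, as noted in the proof of Proposition \ref{DimensionedPoissonProductHetero}, is insensitive to the dimensional grading once the two operations are known to be homogeneous over a commutative dimension monoid. I would therefore defer it to that computation and concentrate the argument on showing that the two displayed formulas define well-posed, homogeneous dimensioned multiplications on $A_G\otimes_{R_G}B_G$.

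First I would confirm that both formulas descend to the quotient defining $\otimes_{R_G}$. Since $*_p$, $*_q$, $\{\,,\}_b$ and $\{\,,\}_c$ are dimensioned $R_G$-bilinear by the definition of a dimensioned algebra multiplication, any ring factor $r_g\in R_G$ may be slid freely across the tensor symbol in each summand; hence replacing $a\otimes b$ by $r_g\cdot a\otimes b=a\otimes r_g\cdot b$ leaves both expressions invariant, so they respect the balancing relation \eqref{tensorquorel}. This step is routine.

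The heart of the matter is the dimension count, carried out on homogeneous elements with $a_x*_p a'_y=(a*a')_{pxy}$ and analogously for the remaining three operations. Recalling the monoid law $(g\otimes_G h)\cdot(g'\otimes_G h'):=gg'\otimes_G hh'$ on $G\times^G G$, the commutative product sends $(a_x\otimes b_u,a'_y\otimes b'_v)$ to an element of dimension $pxy\otimes_G quv=(p\otimes_G q)\cdot(x\otimes_G u)\cdot(y\otimes_G v)$, so $*_{pq}$ is homogeneous of dimension $p\otimes_G q$ with no hypothesis needed. The bracket is the delicate case: its two summands carry the dimensions $bxy\otimes_G quv$ and $pxy\otimes_G cuv$, which must agree in $G\times^G G$ for the sum to be defined. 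This is exactly where $bq=pc$ is indispensable. Applying the quotient relation $(gd,e)\sim_G(d,ge)$ and using commutativity of $G$, I would slide the leading factor onto the right slot to obtain
\begin{equation}
bxy\otimes_G quv\sim_G xy\otimes_G bquv=xy\otimes_G pcuv\sim_G pxy\otimes_G cuv,
\end{equation}
so both summands lie in the same slice and $\{\,,\}_{pc}$ is well-defined and homogeneous of common dimension $b\otimes_G q=p\otimes_G c$, the value abbreviated $pc$ in the statement.

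I expect this matching of the two bracket terms to be the only real obstacle: it is not computationally heavy, but it is the unique point at which the hypothesis enters, and without $bq=pc$ the bracket is genuinely ill-defined in $G\times^G G$ rather than merely failing an identity. Once homogeneity of both operations is in hand, I would close by observing that $G\times^G G$ is again a commutative monoid, so every Poisson identity is dimensionally consistent and the algebraic verification reduces verbatim to the heterogeneous case of Proposition \ref{DimensionedPoissonProductHetero} with $G=H$ and the two gradings identified, completing the proof.
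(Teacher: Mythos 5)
Your proposal is correct and takes essentially the same approach as the paper's proof: the algebraic identities (commutativity, antisymmetry, Jacobi, Leibniz) are deferred to the heterogeneous case of Proposition \ref{DimensionedPoissonProductHetero} and the classical Poisson tensor product, while the substantive content is the dimension count, with the hypothesis $bq=pc$ entering exactly at the well-definedness of the bracket. The only cosmetic difference is that you match the two bracket summands by sliding factors across $\otimes_G$ via the relation $\sim_G$, whereas the paper invokes the identification $G\times^G G\cong G$ and computes $bqxyuv=pcxyuv$ directly in $G$ --- these are the same computation.
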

\begin{proof}
We proceed analogously to the proof of Proposition \ref{DimensionedPoissonProductHetero} above with the added simplicity that now all dimension sets are $G$ and the element-wise tensor product is given simply by monoid multiplication since $G\times^G G \cong G$. Taking the dimension projection of the commutative product we obtain:
\begin{equation}
    \delta(a_x *_p a_y' \otimes_{R_G} b_u *_q b_v') = pxy \otimes_G quv = pq xyuv = pq \delta(a_x \otimes_{R_G} b_u ) \delta(a_y' \otimes_{R_G} b_v'),
\end{equation}
hence $*_{pq}$ is a well-defined commutative multiplication of dimension $pq\in G $. For the bracket $\{\,,\,\}_{pc}$ to be well-defined both terms need to have equal dimension projection, this is indeed the case from the condition $bq=pc$ assumed of the pair of dimensioned Poisson algebras:
\begin{equation}
    \delta (\{ a_x, a_y'\}_b \otimes b_u *_q b_v') = bq xyuv = pc xyuv =  \delta (a_x*_p a_y'\otimes \{b_u,b_v'\}_c).
\end{equation}
\end{proof}

\subsection{The Power Functor} \label{PowFunct}

In this section we show how the lines and line-vector spaces of Section \ref{Lines} can be regarded as special cases of the dimensioned structures of Section \ref{dimStruc}. In doing so we will see how these notions lead to a natural dimensioned generalisation of vector spaces. This shall pave the way for our main results that associate dimensioned algebras to line bundles in Section \ref{LinesToDim}.\newline

The cornerstone of the correspondence between the unit-free and the dimensioned formalisms is the so-called \textbf{power functor} construction that we detail in what follows. Consider a line $L\in \Line$ and denote:
\begin{equation}
    \begin{cases} 
      L^n:=\otimes^nL & n>0 \\
      L^n:= \Real & n=0 \\
      L^n:=\otimes^nL^* & n<0 
   \end{cases}
\end{equation}
such that given two integers $n,m\in\Int$ the following equations hold
\begin{equation}
    (L^n)^* = L^{-n} \qquad L^{n}\otimes L^{m} = L^{n+m}.
\end{equation}
We define the \textbf{power} of a line $L\in\Line$ as the set of all tensor powers
\begin{equation}
    L^\odot:=\bigcup_{n\in \Int} L^n.
\end{equation}
This set has than an obvious dimension structure with dimension set $\Int$:
\begin{equation}
    \pi:L^\odot\to \Int.
\end{equation}
Since dimension slices are precisely the tensor powers $L^n$, they carry a natural $\Real$-vector space structure, thus making the power of $L$ into a dimensional abelian group $(L^\odot_\Int,+_\Int)$. The next proposition shows that the ordinary $\Real$-tensor product of vector spaces endows $L^\odot$ with a dimensioned field structure.

\begin{prop}[Dimensioned Ring Structure of the Power of a Line] \label{DimRingPower}
Let $\normalfont L\in\Line$ be a line and $(L^\odot_\Int,+_\Int)$ its power, then the $\Real$-tensor product of elements induces a dimensioned multiplication
\begin{equation}
    \odot: L^\odot \times L^\odot\to L^\odot
\end{equation}
such that $(L^\odot_\Int,+_\Int,\odot)$ becomes a dimensioned field.
\end{prop}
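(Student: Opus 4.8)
The plan is to build the multiplication $\odot$ from the coherent system of canonical factors supplied by the symmetric monoidal structure with duality on $\Line$, and then read off each dimensioned-field axiom as a standard property of tensor products of one-dimensional spaces. First I would fix, for each pair $n,m\in\Int$, the canonical factor $c_{n,m}\colon L^n\otimes L^m\to L^{n+m}$ obtained by composing associators, unitors, braidings and the evaluation factors $p_L\colon L^*\otimes L\to\Real$ (together with their inverses); for mixed signs this is precisely the step that pairs copies of $L$ against copies of $L^*$ to cancel them down to $L^{n+m}$. For $a\in L^n$ and $b\in L^m$ I then set $a\odot b:=c_{n,m}(a\otimes b)\in L^{n+m}$. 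Since this is defined for every pair of elements and satisfies $\pi(a\odot b)=\pi(a)+\pi(b)$, the projection $\pi$ becomes a morphism onto the additive monoid $(\Int,+)$, so $\odot$ is a genuine dimensioned (totally-defined) multiplication on the dimensional abelian group $(L^\odot_\Int,+_\Int)$ already identified in the excerpt.

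Next I would verify the dimensioned ring axioms. Distributivity over $+_\Int$ within a slice is immediate, since $\otimes$ is bilinear and each $c_{n,m}$ is linear. The multiplicative identity is $1\in\Real=L^0$: the left and right unitor isomorphisms give $1\odot a=a=a\odot 1$. Associativity $(a\odot b)\odot c=a\odot(b\odot c)$ and commutativity $a\odot b=b\odot a$ are where the real work hides, but both follow from Mac Lane's coherence theorem: all ways of reassociating and permuting $L^n\otimes L^m\otimes L^k$ into $L^{n+m+k}$ agree, so the two bracketings are the common image of $a\otimes b\otimes c$ under the unique canonical factor, and the braiding identifies $a\odot b$ with $b\odot a$. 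These coherence facts are exactly what the symmetric monoidal structure on $\Line$ guarantees, and I expect establishing well-definedness together with associativity and commutativity — rather than the algebra of inverses — to be the main obstacle.

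Finally I would establish the field property. Since each slice $L^n$ is one-dimensional, a non-zero element $a\in L^n$ spans it, and I would define $a^{-1}\in L^{-n}=(L^n)^*$ to be the unique covector with $a^{-1}(a)=1$. By construction of the evaluation factor, the canonical identification $c_{n,-n}\colon L^n\otimes L^{-n}\to L^0=\Real$ sends $a\otimes a^{-1}$ to the pairing $a^{-1}(a)=1$, so $a\odot a^{-1}=1$. As every non-zero homogeneous element admits such an inverse and the partial addition only ever combines elements of a single slice, this shows $(L^\odot_\Int,+_\Int,\odot)$ is a dimensioned field.
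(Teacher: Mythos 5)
Your overall architecture matches the paper's: define $\odot$ on homogeneous elements by tensoring and cancelling positive against negative powers via the duality pairing, check compatibility with the $\Int$-grading, and obtain inverses from the fact that each slice $L^n$ is $1$-dimensional (your inverse argument is essentially identical to the paper's). The gap is in how you justify well-definedness, associativity and commutativity --- the very steps you yourself flag as the main obstacle. Mac Lane's coherence theorem governs diagrams built from associators, unitors and braidings only; the evaluation factor $p_L:L^*\otimes L\to \Real$ is not part of the symmetric monoidal structure, and once it enters, the relevant statement is Kelly--Laplaza coherence for categories with duals, which does \emph{not} assert that all such diagrams commute: maps that contract different pairs of tensor factors have different underlying graphs and in general differ. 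Concretely, the two bracketings of $a\odot\alpha\odot b$ with $a,b\in L^{1}$, $\alpha\in L^{-1}$ compute $\alpha(a)\cdot b$ and $\alpha(b)\cdot a$ respectively, and commutativity at $n=m=1$ requires $a\otimes b=b\otimes a$ as elements of $L\otimes L$, i.e.\ that the braiding act as the identity on $L\otimes L$. Neither identity is an instance of coherence --- both are false for vector spaces of dimension $\geq 2$, which is exactly why no such dimensioned ring exists for higher-rank spaces.

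What closes the gap is the pair of $1$-dimensionality facts that the paper puts at the centre of its proof: the canonical isomorphism $\text{End}(L)\cong L^*\otimes L\cong \Real$, which at the level of elements reads $\alpha\otimes a=\alpha(a)\cdot \Id_L$ and hence gives $\alpha(a)\cdot b=\alpha(b)\cdot a$; and the resulting canonical commutativity $a\otimes b=b\otimes a$ in $L\otimes L$, which makes every permutation act trivially on $\otimes^k L$ and therefore makes your canonical factors $c_{n,m}$ independent of the chosen pairing and ordering. With these two lemmas inserted, your construction goes through and coincides with the paper's proof; without them, the appeal to coherence does not prove what is needed.
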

\begin{proof}
The construction of the dimensioned ring multiplication $\odot$ is done simply via the ordinary tensor product of ordinary vectors and taking advantage of the particular properties of 1-dimensional vector spaces. The two main facts that follow from the 1-dimensional nature of lines are: firstly, that linear endomorphisms are simply multiplications by field elements
\begin{equation}
    \text{End}(L)\cong L^*\otimes L\cong \Real
\end{equation}
which, at the level of elements, means that
\begin{equation}
    \text{End}(L)\ni\alpha \otimes a =\alpha(a)\cdot \Id_{L}
\end{equation}
as it can be easily shown by choosing a basis; and secondly, that the tensor product becomes canonically commutative, since, using the isomorphism above, we can directly check
\begin{equation}
    a\otimes b(\alpha,\beta)=\alpha(a)\beta(b)=\alpha(b)\beta(a)=b\otimes a(\alpha,\beta),
\end{equation}
thus showing
\begin{equation}
    a\otimes b=b\otimes a \in L\otimes L=L^2.
\end{equation}
The binary operation $\odot$ is then explicitly defined for elements $a,b\in L=L^1$, $\alpha,\beta\in L^*=L^{-1}$ and $r,s\in \Real=L^0$ by
\begin{align}
    a\odot b &:= a\otimes b\\
    \alpha\odot \beta &:= \alpha \otimes \beta\\
    r\odot s &:= r\otimes s=rs\\
    r\odot a &:= ra\\
    r\odot \alpha &:= r\alpha\\
    \alpha \odot a &:=\alpha(a) = a(\alpha) =: a \odot \alpha
\end{align}
Products of two positive power tensors $a_1\otimes \cdots \otimes a_q$, $b_1\otimes \cdots \otimes b_p$ and negative powers $\alpha_1\otimes \cdots \otimes \alpha_q$, $\beta_1\otimes \cdots \otimes \beta_p$ are defined by
\begin{align}
    (a_1\otimes \cdots \otimes a_q)\odot(b_1\otimes \cdots \otimes b_p) &:= a_1\otimes \cdots \otimes a_q\otimes b_1\otimes \cdots \otimes b_p\\
    (\alpha_1\otimes \cdots \otimes \alpha_q)\odot (\beta_1\otimes \cdots \otimes \beta_p) &:= \alpha_1\otimes \cdots \otimes \alpha_q \otimes \beta_1\otimes \cdots \otimes \beta_p
\end{align}
and extending by $\Real$-linearity. Let $q,p>0$, the dimensioned ring product satisfies:
\begin{equation}
    \odot: L^q \times L^p \to L^{q+p}, \qquad \odot: L^{-q} \times L^{-p} \to L^{-q-p}, \qquad \odot: L^0 \times L^0 \to L^0.
\end{equation}
For products combining positive power tensors $a_1\otimes \cdots \otimes a_q$ and negative power tensors $\alpha_1\otimes \cdots \otimes \alpha_p$ we critically make use of the isomorphism $L^*\otimes L\cong \Real$ to define without loss of generality:
\begin{equation}
    (a_1\otimes \cdots \otimes a_q) \odot (\alpha_1\otimes \cdots \otimes \alpha_p) := \alpha_1( a_1) \cdots \alpha_q( a_q) \alpha_{p-q}\otimes \cdots \otimes \alpha_p
\end{equation}
where $p>q>0$. It is then clear that the multiplication $\odot$ satisfies, for all $m,n\in \Int$,
\begin{equation}
    \odot: L^m \times L^n \to L^{m+n}
\end{equation}
and so it is compatible with the dimensioned structure of $L^\odot_\Int$. The multiplication $\odot$ is clearly associative and bilinear with respect to addition on each dimension slice from the fact that the ordinary tensor product is associative and $\Real$-bilinear. Then it follows that $(L^\odot_\Int,+_\Int,\odot)$ is a commutative dimensioned ring. It only remains to show that non-zero elements of $L^\odot$ have multiplicative inverses. Note that a non-zero element corresponds to some non-vanishing tensor $0\neq h\in L^n$, but, since $L^n$ is a 1-dimensional vector space for all $n\in \Int$, we can find a unique $\eta\in (L^n)^*=L^{-n}$ such that $\eta(h)=1$. It follows from the above formula for products of positive and negative tensor powers that, in terms of the dimensioned ring multiplication, this becomes
\begin{equation}
    h\odot \eta =1,
\end{equation}
thus showing that all non-zero elements have multiplicative inverses, making the dimensioned ring $(L^\odot_\Int,+_\Int,\odot)$ into a dimensioned field.
\end{proof}

The construction of the power dimensioned field of a line is, in fact, functorial.

\begin{prop}[The Power Functor for Lines] \label{PowerIsAFunctorLine}
The assignment of the power construction to a line is a functor
\begin{equation}
\normalfont
    \odot: \Line \to \textsf{DimRing}.
\end{equation}
Furthermore, a choice of unit in a line $L\in \Line$ induces a choice of units in the dimensioned field $(L^\odot_\Int,+_\Int,\odot)$ which, since $L^0=\Real$, then gives an isomorphism with the product dimensioned field
\begin{equation}
    L^\odot \cong \Real \times \Int.
\end{equation}
\end{prop}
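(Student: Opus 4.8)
The plan is to establish functoriality first and then the trivialisation induced by a unit. On objects the assignment is already handled by Proposition \ref{DimRingPower}, so the work is to define $\odot$ on morphisms and check the dimensioned ring axioms. Given a factor $B:L\to L'$, I would define $B^\odot:L^\odot\to L'^\odot$ as the dimension-preserving map (covering $\Id_\Int$) acting slice-wise: on $L^n$ with $n>0$ by the tensor power $\otimes^n B$, on $L^0=\Real$ by the identity, and on $L^n$ with $n<0$ by the tensor power $\otimes^{|n|}(B^{-1})^*$, where $(B^{-1})^*:L^*\to L'^*$ is the inverse transpose, which is well-defined precisely because $B$ is invertible. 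By construction this sends $L^n$ into $L'^n$, so it covers $\Id_\Int$ and is dimension-preserving.

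Next I would verify that $B^\odot$ is a morphism of dimensioned rings. Preservation of the unit is immediate since $1\in L^0=\Real$ is fixed. Preservation of $\odot$ splits into cases following the explicit formulas in the proof of Proposition \ref{DimRingPower}: for products of like-signed powers the operation is tensor concatenation, which $B^\odot$ respects because the tensor power is functorial; the only delicate case is the mixed-sign product, where $\odot$ involves the evaluation pairing $\alpha(a)$ for $a\in L$, $\alpha\in L^*$. Here the essential identity is
\begin{equation}
    \bigl((B^{-1})^*\alpha\bigr)(Ba)=\alpha\bigl(B^{-1}Ba\bigr)=\alpha(a),
\end{equation}
so the contractions are preserved and hence $B^\odot$ commutes with $\odot$ on all slices. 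Functoriality, $(B'\circ B)^\odot=B'^\odot\circ B^\odot$ and $\Id_L^\odot=\Id_{L^\odot}$, then follows immediately from functoriality of the tensor power and transpose constructions.

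For the second statement, a unit $u\in L^\bullet$ determines a unique dual co-unit $u^*\in L^*$ by $u^*(u)=1$ (uniqueness because $\dimm L=1$). I would set $u^n:=u^{\otimes n}$ for $n>0$, $u^0:=1\in\Real$, and $u^n:=(u^*)^{\otimes|n|}$ for $n<0$, giving a section $n\mapsto u^n$ of $\pi:L^\odot\to\Int$ with non-vanishing image. The key check is multiplicativity $u^n\odot u^m=u^{n+m}$: for like-signed exponents this is again tensor concatenation, while the mixed case reduces, via the explicit contraction formula of Proposition \ref{DimRingPower}, to repeated application of $u^*(u)=1$. Thus $u^\odot$ is a choice of units in the sense of the dimensioned ring definition. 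Finally, since $u^n$ is a basis of the one-dimensional slice $L^n$, the assignment $r\,u^n\mapsto(r,n)$ is a slice-wise linear isomorphism that intertwines $\odot$ with the product multiplication $(r,n)\cdot(s,m)=(rs,n+m)$ of $\Real\times\Int$, yielding the claimed isomorphism of dimensioned fields.

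I expect the main obstacle to be the mixed-sign compatibility of $B^\odot$ with $\odot$: one must confirm that the inverse-transpose choice on negative powers is exactly what makes $B^\odot$ commute with the evaluation contractions appearing in Proposition \ref{DimRingPower}, and that the bookkeeping of which factors are paired off is consistent across both sides of the equation. Once this single compatibility is secured, everything else is routine linear algebra on one-dimensional spaces.
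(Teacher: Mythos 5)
Your proposal is correct and follows essentially the same route as the paper's proof: the same slice-wise definition of $B^\odot$ (tensor powers of $B$ on positive slices, identity on $L^0$, tensor powers of the inverse-transpose $(B^{-1})^*$ on negative slices), the same key mixed-sign contraction identity $\bigl((B^{-1})^*\alpha\bigr)(Ba)=\alpha(a)$, and the same construction of the multiplicative unit section $n\mapsto u^n$. The only cosmetic difference is that you exhibit the final trivialisation $r\,u^n\mapsto(r,n)$ explicitly, whereas the paper obtains $L^\odot\cong\Real\times\Int$ by invoking Proposition \ref{DimLessRing}; the content is identical.
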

\begin{proof}
To show functoriality we need to define the power of a factor of lines $B:L_1\to L_2$
\begin{equation}
    B^\odot:L_1^\odot \to L_2^\odot.
\end{equation}
This can be done explicitly in the obvious way, for $q>0$
\begin{align}
    B^\odot|_{L^q} &:= B\otimes \stackrel{q}{\cdots} \otimes B :L_1^q\to L_2^q\\
    B^\odot|_{L^0} &:= \Id_{\Real}:L_1^0\to L_2^0\\
    B^\odot|_{L^{-q}} &:= (B^{-1})^*\otimes \stackrel{q}{\cdots} \otimes (B^{-1})^* :L_1^{-q}\to L_2^{-q}
\end{align}
where we have crucially used the invertibility of the factor $B$. By construction, $B^\odot$ is compatible with the $\Int$-dimensioned structure and since $B$ is a linear map with linear inverse, all the tensor powers act as $\Real$-linear maps on the dimension slices, thus making $B^\odot:L_1^\odot \to L_2^\odot$ into a morphism of abelian dimensioned groups. Showing that $B^\odot$ is a dimensioned ring morphism follows easily by the explicit construction of the dimensioned ring multiplication $\odot$ given in proposition \ref{DimRingPower} above. This is checked directly for products that do not mix positive and negative tensor powers and for mixed products it suffices to note that
\begin{equation}
    B^\odot (\alpha)\odot B^\odot (a)=(B^{-1})^*(\alpha) \odot B(a)= \alpha (B^{-1}(B(a)))= \alpha(a) = \Id_{\Real}(\alpha(a)) =B^\odot (\alpha\odot a).
\end{equation}
It follows from the usual properties of tensor products in vector spaces that for another factor $C:L_2\to L_3$ we have
\begin{equation}
    (C\circ B)^\odot=C^\odot \circ B^\odot, \qquad (\Id_L)^\odot=\Id_{L^\odot},
\end{equation}
thus making the power assignment into a functor. Recall that a choice of unit in a line $L\in \Line$ is simply a choice of non-vanishing element $u\in L^\bullet$. In proposition \ref{DimRingPower} we saw that $L^\odot$ is a dimensioned field, so multiplicative inverses exist, let us denote them by $u^{-1}\in (L^*)^\bullet$. Using the notation for $q>0$
\begin{align}
    u^q &:= u\odot \stackrel{q}{\cdots} \odot u\\
    u^0 &:= 1\\
    u^{-q} &:= u^{-1}\odot \stackrel{q}{\cdots} \odot u^{-1},
\end{align}
it is clear that the map
\begin{align}
u: \Int & \to L^\odot\\
n & \mapsto u^n
\end{align}
satisfies
\begin{equation}
    u^{n+m}=u^n\odot u^m.
\end{equation}
By construction, all $u^n\in L^n$ are non-zero, so $u:\Int \to L^\odot$ is a choice of units in the dimensioned field $(L^\odot_\Int,+_\Int,\odot)$. Noting that $(L^\odot)_0=L^0=\Real$, the isomorphism $L^\odot \cong \Real \times \Int$ follows from proposition \ref{DimLessRing}.
\end{proof}

The power construction naturally extends to line-vector spaces. Since lines and vector spaces belong to the same category of generic vector spaces, we define the \textbf{power dimensioned vector space} as:
\begin{equation}
    V^{\odot L}:=\bigcup_{n\in \Int} L^n\otimes V.
\end{equation}
The ordinary linear structure of tensor powers gives a natural dimensional abelian group structure $(V^{\odot L}_\Int,+_\Int)$. The monoidal structure of the category of vector spaces allows us to define a natural dimensioned $L^\odot$-module multiplication:
\begin{equation}
    r_n\cdot (s_m\otimes v) := r_n \odot s_m \otimes v
\end{equation}
where $v\in V$ and $r_n,s_m\in L^\odot$ and whose distributivity property follows directly from the dimensioned ring structure of $L^\odot$. Note that on the dimensionless slice $V^{\odot L}_0=\Real \otimes V\cong V$ the dimensioned $L^\odot$-module multiplication restricts to the ordinary $\Real$-vector space multiplication. At the level of dimensions, the dimensioned module multiplication appears as the monoid action of $\Int$ on itself by addition. The power construction for lvector spaces turns out to be functorial and to generalise the power functor of lines.

\begin{prop}[The Power Functor for Line-Vector Spaces] \label{PowerIsAFunctorLVect}
The assignment of the power dimensioned vector space construction to a line-vector space
\begin{equation}
\normalfont
    \odot: \LVect \to \textsf{DimVect}
\end{equation}
is functorial.
\end{prop}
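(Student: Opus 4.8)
The plan is to mimic the proof of Proposition \ref{PowerIsAFunctorLine} by defining the power of a linear factor slice-by-slice and reducing every verification to facts already established for the power functor of lines. Given a linear factor $\psi^B = (\psi, B): V^L \to W^{L'}$, I would define its power $(\psi^B)^\odot: V^{\odot L} \to W^{\odot L'}$ on the $n$-th dimension slice $L^n \otimes V$ by
\begin{equation}
    (\psi^B)^\odot|_{L^n \otimes V} := B^\odot|_{L^n} \otimes \psi : L^n \otimes V \to (L')^n \otimes W,
\end{equation}
where $B^\odot|_{L^n}$ is the $n$-th graded piece of the line power morphism from Proposition \ref{PowerIsAFunctorLine} (that is, $B^{\otimes n}$ for $n>0$, $\Id_\Real$ for $n=0$, and $((B^{-1})^*)^{\otimes |n|}$ for $n<0$). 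Since $\psi$ and each $B^\odot|_{L^n}$ are $\Real$-linear and the assignment preserves the integer grading, $(\psi^B)^\odot$ is immediately a morphism of dimensional abelian groups covering $\Id_\Int$ on dimension sets.

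The first thing to check is that $(\psi^B)^\odot$ is a morphism of dimensioned modules covering the dimensioned ring morphism $B^\odot: L^\odot \to (L')^\odot$. Concretely, for $r_n\in L^\odot$ and $s_m\otimes v\in V^{\odot L}$ one computes, using $r_n\cdot(s_m\otimes v)=r_n\odot s_m\otimes v$ together with the fact that $B^\odot$ is a ring morphism,
\begin{equation}
    (\psi^B)^\odot(r_n\cdot(s_m\otimes v)) = B^\odot(r_n\odot s_m)\otimes \psi(v) = B^\odot(r_n)\odot B^\odot(s_m)\otimes \psi(v) = B^\odot(r_n)\cdot(\psi^B)^\odot(s_m\otimes v),
\end{equation}
which is exactly the $B^\odot$-linearity required of a morphism in $\textsf{DimVect}$. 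Functoriality then follows formally: on identities, $B^\odot|_{L^n}=\Id_{L^n}$ gives $(\Id_V^{\Id_L})^\odot=\Id_{V^{\odot L}}$; on a composable pair $\psi^B$ and $\chi^C$, whose composite in $\LVect$ is $(\chi\circ\psi)^{C\circ B}$, the slice-wise identity $(C\circ B)^\odot|_{L^n}=C^\odot|_{(L')^n}\circ B^\odot|_{L^n}$ from Proposition \ref{PowerIsAFunctorLine}, combined with the bifunctoriality of the ordinary tensor product of linear maps, yields $((\chi\circ\psi)^{C\circ B})^\odot=(\chi^C)^\odot\circ(\psi^B)^\odot$.

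I do not expect a genuine obstacle here: the only delicate bookkeeping -- the interaction of positive and negative tensor powers under $\odot$ and the invertibility of $B$ needed to define $((B^{-1})^*)^{\otimes|n|}$ on the negative slices -- has already been absorbed into Proposition \ref{PowerIsAFunctorLine}, so the present argument is essentially a tensoring-by-$\psi$ of that result. The remaining claim, that this construction generalises the line power functor, is then immediate: setting $V=\Real$ gives $L^n\otimes\Real\cong L^n$, under which $(\psi^B)^\odot$ reduces to $B^\odot$, recovering Proposition \ref{PowerIsAFunctorLine} as the special case $\psi=\Id_\Real$.
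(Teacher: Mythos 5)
Your proposal is correct and follows essentially the same route as the paper: the morphism power is defined by $s\otimes v\mapsto B^\odot(s)\otimes\psi(v)$, and the key verification is the identical multiplicativity computation reducing everything to the dimensioned ring morphism property of $B^\odot$ from Proposition \ref{PowerIsAFunctorLine}. The only difference is that you spell out the identity/composition checks explicitly, which the paper leaves implicit by deferring to the line case.
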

\begin{proof}
This result is a direct extension of Proposition \ref{PowerIsAFunctorLine} to line-vector spaces. The proof works analogously for the following assignment of morphisms: let a linear factor $\psi^B:V^L\to W^{L'}$ and define its power dimensioned morphism $\psi^{\odot B}:V^{\odot L}\to W^{\odot L'}$ as acting on a generic element
\begin{equation}
    \psi^{\odot B} (s\otimes v) := B^\odot(s) \otimes \psi (v)
\end{equation}
for all $s\in L^\odot$ and $v\in V$. This map is clearly additive, since it is slice-wise $\Real$-linear, and it satisfies the following multiplicativity condition
\begin{equation}
    \psi^{\odot B} (r\cdot (s\otimes v)) = B^\odot (r \odot s) \otimes \psi(v) = B^\odot (r) \odot B^\odot(s) \otimes \psi(v) = B^\odot (r) \cdot \psi^{\odot B}(s\otimes v)
\end{equation}
which is the general property of (twisted) dimensioned module morphisms generalising linearity.
\end{proof}

The power dimensioned field construction is not just limited to a single line but can also be defined for any ordered family of lines: given the ordered set of lines $L_1,\dots,L_k\in\Line$, we define their \textbf{power dimensioned field} as:
\begin{equation}
    (L_1,\dots,L_k)^\odot:=\bigcup_{n_1,\dots n_k\in \Int} L_1^{n_1}\otimes \cdots \otimes L_k^{n_k},
\end{equation}
which has a natural dimensional abelian group structure given by $\Real$-linear addition and has dimension group $\Int^k$. The dimensioned multiplicative structure is now defined via:
\begin{equation}
    (a_1\otimes \dots \otimes a_k) \odot (b_1\otimes \dots \otimes b_k):= a_1\odot b_1 \otimes \dots \otimes a_k\odot b_k
\end{equation}
which is clearly associative and commutative thus making $((L_1,\dots,L_k)^\odot_{\Int^k},+_{\Int^k},\odot)$ into a dimensioned field. The braiding isomorphisms present in the category of lines and the fact that zeroth tensor powers of lines are copies of $\Real$ allows to define inclusions of partial powers:
\begin{equation}
    L_i^{n_i}\ni s_i\mapsto 1 \otimes \cdots \otimes s_i \otimes \cdots 1\in L_1^0\otimes \cdots L_i^{n_i} \otimes \cdots \otimes L_k^0
\end{equation}
These maps are dimensioned ring homomorphisms by construction so they lead to a collection of natural injections of the power rings of partial powers, e.g.
\begin{align}
    L_i^\odot &\hookrightarrow (L_1,\dots,L_k)^\odot\\
    (L_i\otimes L_j)^\odot &\hookrightarrow (L_1,\dots,L_k)^\odot
\end{align}
Note that these are proper inclusions since the dimension set is $\Int$ for the dimensioned rings on the left but $\Int^k$ for the one on the right.\newline

Recall that dimensioned rings can be regarded as dimensioned commutative algebras over the (ordinary) ring of dimensionless elements. This is now readily apparent from the fact that powers of lines are defined from $\Real$-vector spaces and the fact that they carry a copy of $\Real$ as the ring of dimensionless elements. It is then possible to take tensor products of power rings, since they share the same dimensionless ring $\Real$, and it is straightforward to show that the power of a collection of lines corresponds to the tensor product of their powers regarded as dimensioned commutative $\Real$-algebras: for any two lines $L_1,L_2\in\Line$ there is a dimensioned commutative algebra isomorphism
\begin{equation}
    (L_1,L_2)^\odot \cong L_1^\odot \otimes_\Real L_2^\odot.
\end{equation}
Note that the dimension groups of both sides are $\Int^2$ from the fact that $\Real$ is regarded as dimensioned ring with trivial monoid ${0}$ as dimension set and so the tensor product on the right projects to the Cartesian product of dimension groups.\newline

When we consider power dimensioned vector spaces, however, the definition of tensor product of dimensioned modules in Section \ref{dimStruc} forces us to consider products of pairs of line-vector spaces with the same line component. In fact, given two power dimensioned vector spaces $V^{\odot L}$ and $W^{\odot L}$, the tensor product as dimensioned modules recovers the tensor product of ordinary vector spaces as we can easily check:
\begin{equation}
    V^{\odot L} \otimes_{L^\odot} W^{\odot L} \cong (V\otimes W)^{\odot L}.
\end{equation}
We thus see how the L-rooted view introduced in Section \ref{Lines}, a somewhat unmotivated approach to generalise standard linear algebra notions in which tensor products of lines were explicitly avoided, is now vindicated as the manifestation of the fact that tensor products of dimensioned modules can only be defined for a common underlying dimensioned ring of scalars -- just like in the ordinary theory of modules.\newline

\section{Dimensioned Rings from Line Bundles} \label{LinesToDim}

In this section we extend the dimensioned algebra constructions defined at the level of linear algebra in Section \ref{PowFunct} to the context of differential geometry. The results in this section shall be interpreted as the dimensioned-algebraic approach to smooth manifolds and line bundles.

\subsection{The Power Functor for Line Bundles} \label{PowFunctorLineBundle}

Let $\lambda:L\to M$ be a line bundle over a smooth manifold. Following a construction entirely analogous to the power of a line, we can define the \textbf{power} of the line bundle $L$ as:
\begin{equation}
    \Sec{L}^\odot:=\bigcup_{n\in \Int} \Sec{L^n}.
\end{equation}
This set carries an obvious dimensioned structure with dimension set $\Int$ and the usual module structure on sections for each power $(\Sec{L^n},+_n)$ clearly makes $\Sec{L}^\odot$ into an abelian dimensioned group. Furthermore, the construction of the dimensioned ring product $\odot$ in Proposition \ref{DimRingPower} can be reproduced mutatis mutandis for the modules of sections, thus making the power of a line bundle into a dimensioned ring $(\Sec{L}^\odot_\Int,+_\Int,\odot)$. Note that this dimensioned ring encapsulates the usual algebraic structures found in sections of line bundles: indeed, the dimensionless ring of $\Sec{L}^\odot$ is the ordinary ring of functions of the base manifold $\Sec{L^0}=\Sec{\Real_M}\cong\Cin{M}$ and, for $f\in\Sec{L^0}=\Cin{M}$, $s\in\Sec{L^1}=\Sec{L}$ and $\sigma\in\Sec{L^{-1}}=\Sec{L^*}$, the dimensioned products
\begin{equation}
    f \odot s = f\cdot s, \qquad f \odot \sigma = f\cdot \sigma, \qquad \sigma \odot s = \sigma(s)
\end{equation}
amount to the $\Cin{M}$-module maps and the duality pairing. We now show that, as was the case for lines, the power construction of line bundles is functorial.

\begin{prop}[The Power Functor for Line Bundles] \label{PowerFunctorLineBundles}
The assignment of the power construction to a line bundle is a contravariant functor
\begin{equation}
\normalfont
    \odot : \Line_\Man \to \textsf{DimRing}.
\end{equation}
\end{prop}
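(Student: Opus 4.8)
The plan is to mirror the proof of Proposition \ref{PowerIsAFunctorLine} at the level of sections, the only genuinely new ingredient being the smoothness and naturality of the fibre-wise constructions; everything else reduces pointwise to the linear power functor. Since the dimensioned ring structure on $\Sec{L}^\odot$ has already been established (by reproducing Proposition \ref{DimRingPower} mutatis mutandis for modules of sections), it remains to define the functor on morphisms and to verify the ring-morphism and contravariant functoriality axioms.

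First I would define the action on a factor $B:L_1\to L_2$ covering a smooth map $\varphi:M_1\to M_2$. For each $n\in\Int$ the factor $B$ induces a factor $B_{(n)}:L_1^n\to L_2^n$ on tensor powers, given fibre-wise exactly as in Proposition \ref{PowerIsAFunctorLine}: by $B\otimes\cdots\otimes B$ for $n>0$, by $\Id_{\Real}$ for $n=0$, and by $(B^{-1})^*\otimes\cdots\otimes(B^{-1})^*$ for $n<0$, all covering $\varphi$ and all fibre-wise invertible because $B$ is. I then set $B^\odot$ to act on the slice $\Sec{L_2^n}$ as the factor pull-back of $B_{(n)}$,
\begin{equation}
    B^\odot|_{\Sec{L_2^n}} := (B_{(n)})^* : \Sec{L_2^n} \to \Sec{L_1^n}, \qquad (B_{(n)})^* t (x_1) = (B_{(n),x_1})^{-1}\, t(\varphi(x_1)),
\end{equation}
which is well-defined precisely by fibre-wise invertibility, exactly as for the factor pull-back on $\Sec{L}$ recalled above. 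Since each slice map is $\Real$-linear, $B^\odot$ covers $\Id_\Int$ and is a morphism of dimensional abelian groups; note the contravariance, $B^\odot$ running from $\Sec{L_2}^\odot$ to $\Sec{L_1}^\odot$.

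The substantive step is multiplicativity, $B^\odot(s\odot t)=B^\odot s\odot B^\odot t$ together with $B^\odot(1)=1$. The unit $1\in\Cin{M_2}=\Sec{L_2^0}$ pulls back to $1\in\Cin{M_1}$ since $B^\odot$ acts as $\varphi^*$ on the dimensionless slice. For products that do not mix positive and negative powers the identity follows immediately from the compatibility of $(B_{(n)})^*$ with tensor concatenation of sections. The only case requiring care — and the one I expect to be the main obstacle — is the mixed product, where positive and negative powers contract through the duality pairing $\sigma\odot s=\sigma(s)$; this is where fibre-wise invertibility of $B$ is indispensable. I would verify it pointwise, reducing to the computation in Proposition \ref{PowerIsAFunctorLine}: writing $B_{(-1),x_1}^{-1}=B_{x_1}^*$ (the dual of $B_{x_1}$), for $\sigma\in\Sec{L_2^*}$ and $s\in\Sec{L_2}$ one has
\begin{equation}
    \big(B^\odot\sigma \odot B^\odot s\big)(x_1) = B_{x_1}^*\!\big(\sigma(\varphi(x_1))\big)\big(B_{x_1}^{-1} s(\varphi(x_1))\big) = \sigma(\varphi(x_1))\big(s(\varphi(x_1))\big) = B^\odot(\sigma \odot s)(x_1),
\end{equation}
the middle equality being the cancellation $B_{x_1}\circ B_{x_1}^{-1}=\Id$. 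The general mixed case then follows by $\Real$-multilinearity and iterating this contraction, just as in the linear setting.

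Finally I would check the composition and identity axioms. For a second factor $C:L_2\to L_3$ covering $\psi$, functoriality of tensor powers gives $(C\circ B)_{(n)}=C_{(n)}\circ B_{(n)}$, and the factor pull-back on sections is itself contravariant, $(C_{(n)}\circ B_{(n)})^*=(B_{(n)})^*\circ(C_{(n)})^*$; assembling over all $n$ yields $(C\circ B)^\odot=B^\odot\circ C^\odot$. Likewise $(\Id_L)_{(n)}=\Id_{L^n}$ gives $(\Id_L)^\odot=\Id_{\Sec{L}^\odot}$. This establishes $\odot$ as a contravariant functor $\Line_\Man\to\textsf{DimRing}$.
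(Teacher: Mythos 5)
Your proposal is correct and follows essentially the same route as the paper's proof: slice-wise pull-backs built from tensor powers of the factor pull-back (with the dual/inverse handling on negative powers), the key mixed-product cancellation $B_{x}\circ B_{x}^{-1}=\Id$ to establish multiplicativity across the duality pairing, and functoriality inherited from contravariance of pull-backs. Your uniform packaging via induced factors $B_{(n)}$ on the tensor-power line bundles is only a cosmetic reorganisation of the paper's three basic pull-backs $b^*$, $B^*$ on sections, and $B^*$ on dual sections, and your identification $(B_{(-1),x})^{-1}=B_x^*$ reproduces exactly the paper's dual pull-back.
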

\begin{proof}
Let us first define the power of a factor between line bundles $B:L_1\to L_2$ covering a smooth map $b:M_1\to M_2$. We aim to define a dimensioned ring morphism of the form
\begin{equation}
    B^\odot: (\Sec{L_2}^\odot_\Int,+_\Int,\odot)\to (\Sec{L_1}^\odot_\Int,+_\Int,\odot),
\end{equation}
that is, furthermore, a dimensionless morphism, in the sense that it will cover the identity on the dimension group $\Id_\Int:\Int\to \Int$. It suffices to provide a collection of maps between the sections of all the tensor powers $B^\odot_n:\Sec{L_2^n}\to \Sec{L_1^n}$. The datum provided by the line bundle factor $B$ allows to define three maps
\begin{align}
    b^* &:\Cin{M_2}\to \Cin{M_1}\\
    B^* &:\Sec{L_2}\to \Sec{L_1}\\
    B^* &:\Sec{L_2^*} \to \Sec{L_1^*}
\end{align}
where the first is simply the pull-back of the smooth map between base manifolds, the second is the pull-back of sections induced by a factor of line bundles defined point wise by
\begin{equation}
    B^*(s_2)(x):=B_x^{-1}(s_2(b(x)))
\end{equation}
for all $s_2\in\Sec{L_2}$, and the third is the usual pull-back of dual forms on general vector bundles, defined point-wise for a general bundle map by
\begin{equation}
    B^*\sigma_2(s_1)(x):=\sigma_2(b(x))(B_x(s_1(x)))
\end{equation}
for all $\sigma_2\in \Sec{L_2^*}$, $s_1\in\Sec{L_1}$. The maps $B^\odot_n$ are then defined simply as the tensor powers of these pull-backs. Contravariance then follows directly from contravariance of the pull-backs. It is then clear by construction that $B^\odot$ so defined acts as a dimensioned ring morphism for products of positive or negative tensor powers, then it only remains to show that it also acts as such for mixed products of tensor powers. This is readily checked from the observation that for any two sections $s_2\in\Sec{L_2}$ and $\sigma_2\in\Sec{L_2^*}$ we have:
\begin{align}
    B^\odot\sigma_2\odot B^\odot s_2(x) &=B^*\sigma_2\odot B^*s_2(x)\\
    &=\sigma_2(b(x))(B_xB_x^{-1}(s_2(b(x))))\\
    &=\sigma_2(b(x))(s_2(b(x)))\\
    &=b^*(\sigma_2(s_2))(x)=B^\odot(\sigma_2\odot s_2)(x).
\end{align}
Functoriality thus follows directly from the basic properties of factor pull-backs.
\end{proof}

This last proposition provides the key result that legitimizes the interpretation of line bundles as unit-free manifolds since we notice the similarity of the power functor above with the ordinary contravariant functor given by the assignment of the ring of smooth functions to a manifold
\begin{equation}
    \text{C}^\infty:\Man \to \Ring.
\end{equation}
The power functor $\odot$ is, in fact, a direct generalization of $\text{C}^\infty$ since ordinary rings of functions can be recovered as from trivialised line bundles whose power ring carries the singleton as dimension set.\newline

Due to possible topological constraints, the notion of \textbf{unit} of a line, i.e. a non-vanishing element, can be recovered only locally in line bundles. Let $\lambda:L\to M$ be a line bundle and $U\subset M$ an open subset, the power construction is clearly natural with respect to restrictions since the same prescription used for global sections can be used to define $\Sec{L|_U}^\odot$. Defining the positive and negative powers of $u$ as it was done for the line case, it is clear that a local unit induces a choice of units for the local power
\begin{equation}
    u:\Int \to \Sec{L|_U}^\odot.
\end{equation}
It then follows from the second part of Proposition \ref{PowerIsAFunctorLine} that a local unit $u$ induces an isomorphism of the local power with the trivial dimensioned ring of local functions with dimension set $\Int$:
\begin{equation}
    \Sec{L|_U}^\odot\cong \Cin{U}\times \Int.
\end{equation}

To further establish the dimensioned formalism as the appropriate technology to describe the algebraic counterpart to line bundles, we shall give a dimensioned-algebraic account of submanifolds and products. Consider a submanifold $i:S\hookrightarrow M$ of a line bundle $\lambda:L\to M$. We saw in Section \ref{LineBundles} that a line bundle is induced on $S$ by pull-back with inclusion factor $\iota:L_S\to L$ covering the embedding. There, the set of vanishing sections on $S$, defined formally as the kernel of $\iota$, was shown to be a submodule of the sections of the ambient line bundle $\Gamma_S\subset \Sec{L}$ that can be seen as (locally) generated by the ideal of vanishing functions $I_S\subset\Cin{M}$. The following proposition shows that these two algebraic manifestations of a submanifold in a line bundle fit nicely into the dimensioned picture.

\begin{prop}[Vanishing Dimensioned Ideal of a Submanifold] \label{DimVanishingIdeal}
A submanifold $i:S\hookrightarrow M$ in a line bundle $\lambda:L\to M$ defines a dimensioned ideal $I_S\subset \Sec{L}^\odot$ that allows to characterize (perhaps only locally) the restricted power as a quotient of dimensioned rings
\begin{equation}
    \Sec{L_S}^\odot\cong \Sec{L}^\odot/I_S.
\end{equation}
\end{prop}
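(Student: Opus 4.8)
The plan is to realise $I_S$ as the slice-wise kernel of the power of the embedding factor and then to apply a first isomorphism theorem for dimensioned rings. Proposition \ref{SubManLineBundle} furnishes the embedding factor $\iota: L_S \to L$ covering $i$, and because pull-back commutes with tensor powers we have $L_S^n = i^*(L^n)$ for every $n \in \Int$. Applying the contravariant power functor of Proposition \ref{PowerFunctorLineBundles} yields a dimensionless morphism of dimensioned rings
\begin{equation}
    \iota^\odot: \Sec{L}^\odot \to \Sec{L_S}^\odot,
\end{equation}
whose slice in degree $n$ is the factor pull-back $\iota^*: \Sec{L^n} \to \Sec{L_S^n}$. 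I would then define
\begin{equation}
    I_S := \Ker{\iota^\odot} = \bigcup_{n\in\Int} \{s \in \Sec{L^n} \mid s(x)=0 \in L^n_x \ \forall x \in S\},
\end{equation}
so that its dimensionless slice recovers the vanishing function ideal $I_S \subset \Cin{M}$ and its degree-one slice recovers the vanishing submodule $\Gamma_S$ of Section \ref{LineBundles}.

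First I would verify directly that $I_S$ is a dimensioned ideal in the sense of Section \ref{dimStruc}. Closure under the partial addition is immediate, since each slice is an ordinary submodule of $\Sec{L^n}$. For the absorbing condition $a_m \odot s_n \in I_S \cap \Sec{L^{m+n}}$ with $a_m \in \Sec{L^m}$ and $s_n \in I_S \cap \Sec{L^n}$, I would argue pointwise: by Proposition \ref{DimRingPower} each fibre power $L_x^\odot$ is a dimensioned field, so the absorbing identity $0_d \odot a_e = 0_{de}$ holds in every fibre; evaluating the product at $x \in S$, where $s_n(x)=0$, gives $(a_m \odot s_n)(x) = a_m(x) \odot 0 = 0$, so $a_m \odot s_n$ vanishes on $S$. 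This fibrewise argument is the one place where the dimensioned-field structure does genuine work, as it disposes of the mixed products of positive against negative tensor powers uniformly with the pure ones.

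With $I_S$ confirmed as a dimensioned ideal, Proposition \ref{QuotDimRing} equips $\Sec{L}^\odot / I_S$ with a canonical dimensioned ring structure and forces $\iota^\odot$ to factor through an injective dimensioned ring morphism
\begin{equation}
    \overline{\iota^\odot}: \Sec{L}^\odot / I_S \to \Sec{L_S}^\odot.
\end{equation}
It remains to prove surjectivity, and this is where the local caveat enters. Slice by slice the claim is precisely the power-$n$ analogue of the isomorphism $\Sec{L_S} \cong \Sec{L}/\Gamma_S$ recorded in Section \ref{LineBundles}: every section of $L_S^n = i^*(L^n)$ is the restriction of a section of $L^n$, at least on a neighbourhood of $S$, or globally when $S$ is closed and one extends by a bump function. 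Granting this, each degree-$n$ component of $\overline{\iota^\odot}$ is an isomorphism of $\Cin{S}$-modules; since $\overline{\iota^\odot}$ is a morphism of dimensioned rings its compatibility with $\odot$ is automatic, and assembling the slices gives the desired isomorphism $\Sec{L_S}^\odot \cong \Sec{L}^\odot / I_S$. The main obstacle is therefore not algebraic but this extension step, which is exactly the source of the ``perhaps only locally'' hedge and is inherited verbatim from the degree-one case.
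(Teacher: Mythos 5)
Your proposal is correct and follows essentially the same route as the paper: define $I_S$ as the slice-wise kernel of $\iota^\odot$, verify the dimensioned ideal property, and obtain the isomorphism from Proposition \ref{QuotDimRing} together with the (perhaps only local) extension of sections of tensor powers from $S$ to $M$. The only cosmetic difference is that the paper gets the absorbing property from functoriality of the power construction (the kernel of a dimensioned ring morphism is automatically a dimensioned ideal), whereas you argue fibrewise via the absorbent zero; these amount to the same observation.
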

\begin{proof}
The vanishing  dimensioned ideal is simply defined as the set of sections of all the tensor powers that vanish when restricted to $S$, that is
\begin{equation}
    I_S:=\{a\in \Sec{L^n}|\quad a(x)=0\in L^n_x \quad \forall x\in S\}.
\end{equation}
Note that this is equivalent to the kernel of the power of the inclusion factor $I_S=\Ker{\iota^\odot}$, the dimensioned ideal condition $\Sec{L}^\odot\odot I_S\subset I_S$ then follows:
\begin{equation}
    \iota^\odot(r_n\odot a_m)= (\iota^*)^n r_n \odot (\iota^*)^m a_m=(\iota^*)^n r_n \odot 0_m=0_{n+m}
\end{equation}
for all $r_n\in \Sec{L^n}$, $a_m\in I_S$, where the functoriality of the power construction in Proposition \ref{PowerFunctorLineBundles} has been used. It follows by construction that the ordinary ideal of vanishing functions is the dimensionless component of $I_S$ and that
\begin{equation}
    \Gamma_S=I_S\cap \Sec{L^1}.
\end{equation}
Possibly restricting to a local neighbourhood, we can see that, similarly to the submodule of vanishing sections, the subsets of homogeneous dimension of $I_S$ can all be generated by elements in the dimensionless component. The quotient $\Sec{L}^\odot/I_S$ in Proposition \ref{QuotDimRing} identifies sections of the tensor powers of $L_S$ with extensions in $L$ that differ by a vanishing section of the corresponding tensor power, thus giving the desired result.
\end{proof}

Recall from the ordinary theory of manifolds that for any two manifolds $M_1$ and $M_2$ there are natural inclusions of the rings of functions $\Cin{M_1}$ and $\Cin{M_2}$ into the ring of functions of the product $\Cin{M_1 \times M_2}$. The following propositions show that the power dimensioned ring construction allows to recover the analogous result for the product of line bundles $L_1 \utimes L_2$ defined in Section \ref{LineBundles} as the direct analogue of the Cartesian product of manifolds.

\begin{prop}[Power Dimensioned Ring of Line Bundle Products I] \label{PowerLineBundleProduct1}
Let $\lambda_1:L_1\to M_1$ and $\lambda_2:L_2\to M_2$ be line bundles, then there is a map of dimensioned commutative $\Real$-algebras
\begin{equation}
    \Sec{L_1}^\odot \otimes_\Real \Sec{L_2}^\odot \to \Sec{L_1 \utimes L_2}^\odot.
\end{equation}
\end{prop}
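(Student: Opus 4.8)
The plan is to build the map from the canonical projection factors of the line product together with the contravariant power functor, and then to package the two resulting ring morphisms by the universal property of the tensor product of commutative algebras.

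First I would recall from Proposition \ref{ProductLineBundle} that the line product $L_1 \utimes L_2$ is equipped with projection factors $P_1 : L_1 \utimes L_2 \to L_1$ and $P_2 : L_1 \utimes L_2 \to L_2$. Applying the contravariant power functor of Proposition \ref{PowerFunctorLineBundles} to each gives dimensioned ring morphisms
\begin{equation}
    P_1^\odot : \Sec{L_1}^\odot \to \Sec{L_1 \utimes L_2}^\odot, \qquad P_2^\odot : \Sec{L_2}^\odot \to \Sec{L_1 \utimes L_2}^\odot,
\end{equation}
and I note that both are dimensionless, i.e. they cover $\Id_\Int$, since the power of any factor is a dimensionless morphism by the construction in that proof. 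In particular $P_1^\odot$ sends a section of dimension $n$ to one of dimension $n$, and likewise for $P_2^\odot$.

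Next I would define the candidate morphism $\Phi : \Sec{L_1}^\odot \otimes_\Real \Sec{L_2}^\odot \to \Sec{L_1 \utimes L_2}^\odot$ on generators by
\begin{equation}
    \Phi(a \otimes b) := P_1^\odot(a) \odot P_2^\odot(b)
\end{equation}
and extend it $\Real$-linearly. The crucial point that makes this assignment legitimate is that the codomain $\Sec{L_1 \utimes L_2}^\odot$ is a commutative dimensioned ring, so the images of $P_1^\odot$ and $P_2^\odot$ commute, which is exactly the hypothesis required by the universal property of the tensor product of commutative $\Real$-algebras. I would then verify that $\Phi$ descends past the defining relation $(\lambda \cdot a) \otimes b \sim a \otimes (\lambda \cdot b)$ of the $\Real$-tensor product, using that each $P_i^\odot$ is a ring morphism fixing the constant functions $\Real \subset \Cin{M_i}$, and that $\Phi$ is multiplicative for $\odot$ by expanding $\Phi\big((a \otimes b) \odot (a' \otimes b')\big) = P_1^\odot(a) \odot P_1^\odot(a') \odot P_2^\odot(b) \odot P_2^\odot(b')$ and commuting the two inner factors.

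The bookkeeping I expect to be most delicate is the dimension compatibility. The domain carries dimension group $\Int^2$, because $\Real$ has trivial dimension monoid $\{0\}$ and the tensor product therefore projects to the Cartesian product of the dimension sets, whereas the codomain carries dimension group $\Int$. Since $P_1^\odot(a)$ retains the dimension $n$ of $a$ and $P_2^\odot(b)$ the dimension $m$ of $b$, their $\odot$-product lands in dimension $n+m$; hence $\Phi$ covers the additive monoid morphism $\Int^2 \to \Int$, $(n,m) \mapsto n+m$, which confirms that it is a genuine morphism of dimensioned commutative $\Real$-algebras. Everything beyond this homogeneity check and the well-definedness over $\Real$ rather than over the larger dimensionless rings $\Cin{M_i}$ follows formally from functoriality of $\odot$ and commutativity of the target, so I do not expect any further obstacle; I would not claim $\Phi$ to be an isomorphism, since $M_1 \dtimes M_2$ is in general a nontrivial $\Real^\times$-bundle over $M_1 \times M_2$ and carries more sections than the image of the tensor product.
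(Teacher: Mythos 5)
Your proposal is correct and follows essentially the same route as the paper: apply the contravariant power functor to the canonical projection factors $P_1, P_2$ of the line product and define the map on generators by $r_1 \otimes_\Real r_2 \mapsto P_1^\odot r_1 \odot P_2^\odot r_2$, with multiplicativity secured by commutativity of $\odot$ in the target. Your additional bookkeeping (the dimension map $\Int^2 \to \Int$, $(n,m)\mapsto n+m$, and the observation that the map is not injective) is consistent with, and indeed anticipates, the remark the paper makes immediately after the proposition.
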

\begin{proof}
This result follows from the line bundle product construction. Recall that there are canonical projections
\begin{equation}
\begin{tikzcd}
L_1  & L_1\utimes L_2 \arrow[l,"P_1"']\arrow[r,"P_2"] & L_2.
\end{tikzcd}
\end{equation}
Since these are factors, i.e. line bundle morphisms, the above diagram becomes
\begin{equation}
\begin{tikzcd}[row sep=tiny]
\Sec{L_1}^\odot \arrow[r,"P_1^\odot"] & \Sec{L_1\utimes L_2}^\odot  & \Sec{L_2}^\odot\arrow[l,"P_2^\odot"']
\end{tikzcd}
\end{equation}
under the contravariant power functor for line bundles. Then the map is simply defined as:
\begin{equation}
    r_1 \otimes_\Real r_2 \mapsto P_1^\odot r_1 \odot P_2^\odot r_2.
\end{equation}
Since the tensor product $\Sec{L_1}^\odot \otimes_\Real \Sec{L_2}^\odot$ is taken as a dimensioned commutative algebras over the (dimensionless) ring $\Real$, it only remains to check that the product multiplication maps homomorphically into the power of the line bundle product. This is trivially ensured by functoriality of the power construction again:
\begin{equation}
    (r_1 \otimes_\Real r_2)\cdot (s_1 \otimes_\Real s_2) = r_1\odot s_1 \otimes_\Real r_2\odot s_2 \mapsto P_1^\odot r_1 \odot P_2^\odot r_2 \odot P_1^\odot s_1 \odot P_2^\odot s_2.
\end{equation}
\end{proof}

Note that, in contrast with the case of ordinary rings of functions in product manifolds, the map in the proposition above is not an injection. This is signaling the fact that the line product $L_1 \utimes L_2$, although a well-defined Cartesian categorical product in $\Line_\Man$, does not fully capture the algebraic information that is relevant for the power functor. This goes back to the comment we made in Proposition \ref{ProductLineBundle} about the asymmetry of the definition of line product. Note that the base product $M_1 \dtimes M_2$ is built so that the canonical projections give two line bundles over it:
\begin{equation}\label{LineProductCommutativeDiagram}
\begin{tikzcd}
L_1 \arrow[d] & p_1^*L_1, p_2^*L_2 \arrow[l]\arrow[d, shift right=3]\arrow[d, shift left=3]\arrow[r] & L_2\arrow[d] \\
M_1 &  M_1 \dtimes M_2\arrow[l,"p_1"]\arrow[r,"p_2"'] & M_2
\end{tikzcd}
\end{equation}
This invites us to consider collections of line bundles $L_1,\dots,L_k$ with the same base manifold $M$. By extending the definition of power of a collection of lines introduced at the end of Section \ref{PowFunct} into the context of line bundles we define the \textbf{power dimensioned ring} as:
\begin{equation}
    \Sec{L_1,\dots,L_k}^\odot := \bigcup_{n_1,\dots,n_k\in\Int} \Sec{L_1^{n_1}\otimes \cdots \otimes L_k^{n_k}}
\end{equation}
which, with the slice-wise $\Real$-linear addition $+$ and ordered tensor product of elements $\odot$, clearly becomes a dimensioned ring $( \Sec{L_1,\dots,L_k}^\odot_{\Int^k},+_{\Int^k},\odot^{\Int^k})$. It is then natural to define the image of the power functor on line products as
\begin{equation}
    \odot: L_1 \utimes L_2 \mapsto \Sec{p_1^*L_1,p_2^*L_2}^\odot.
\end{equation}
Note that the symmetry isomorphisms for lines $L_1 \utimes L_2 \cong L_2 \utimes L_1$ translate as the braiding isomorphisms for power dimensioned rings:
\begin{equation}
    \Sec{p_1^*L_1,p_2^*L_2}^\odot \cong \Sec{p_2^*L_2,p_1^*L_1}^\odot.
\end{equation}
When there is no risk of confusion we will omit the pull-back maps in the power ring notation. We are now in the position to recover an inclusion of power dimensioned rings.
\begin{prop}[Power Dimensioned Ring of Line Bundle Products II] \label{PowerLineBundleProduct2}
Let $\lambda_1:L_1\to M_1$ and $\lambda_2:L_2\to M_2$ be line bundles, then there is a natural inclusion of dimensioned rings
\begin{equation}
    \Sec{L_1}^\odot \otimes_\Real \Sec{L_2}^\odot \hookrightarrow \Sec{L_1, L_2}^\odot
\end{equation}
where the tensor product is taken as dimensioned commutative $\Real$-algebras.
\end{prop}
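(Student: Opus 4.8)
The plan is to assemble the inclusion from the two canonical pullback factors living over the base product, composed with the partial-power injections, and then to reduce the injectivity claim, which is the only genuinely new point, to a slice-wise pointwise computation that exploits the one-dimensionality of the fibres.

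First I would set up the map. Over the base product $M_1\dtimes M_2$ (Proposition \ref{BaseProductLineBundles}) we have the two pullback line bundles $p_1^*L_1$ and $p_2^*L_2$ together with their canonical factors $P_1:p_1^*L_1\to L_1$ and $P_2:p_2^*L_2\to L_2$ covering the projections $p_1,p_2:M_1\dtimes M_2\to M_i$. Applying the contravariant power functor of Proposition \ref{PowerFunctorLineBundles} yields dimensioned ring morphisms $P_1^\odot:\Sec{L_1}^\odot\to\Sec{p_1^*L_1}^\odot$ and $P_2^\odot:\Sec{L_2}^\odot\to\Sec{p_2^*L_2}^\odot$. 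Composing these with the natural partial-power injections $\Sec{p_1^*L_1}^\odot\hookrightarrow\Sec{p_1^*L_1,p_2^*L_2}^\odot$ and $\Sec{p_2^*L_2}^\odot\hookrightarrow\Sec{p_1^*L_1,p_2^*L_2}^\odot$, I would define on simple tensors
\begin{equation}
    \Phi(r_1\otimes_\Real r_2):=P_1^\odot r_1\odot P_2^\odot r_2,
\end{equation}
where $\odot$ now denotes the ordered tensor product inside $\Sec{p_1^*L_1,p_2^*L_2}^\odot$; for $r_1\in\Sec{L_1^n}$ and $r_2\in\Sec{L_2^m}$ this lands in bidegree $(n,m)$, so $\Phi$ covers the identity $\Id_{\Int^2}$ on dimension groups (both sides having dimension group $\Int^2$).

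Next I would check that $\Phi$ is a well-defined morphism of dimensioned rings. Since the tensor product is taken over the dimensionless constants $\Real$, the only relations to respect are $\Real$-bilinearity and additivity, which hold because the factor pull-backs are $\Real$-linear and $\odot$ is $\Real$-bilinear on each slice. That $\Phi$ intertwines the multiplications follows formally from functoriality: the tensor-product algebra multiplies by $(r_1\otimes_\Real r_2)\cdot(s_1\otimes_\Real s_2)=(r_1\odot s_1)\otimes_\Real(r_2\odot s_2)$, and since each $P_i^\odot$ is a dimensioned ring homomorphism while $\odot$ is commutative and associative, the string $P_1^\odot r_1\odot P_1^\odot s_1\odot P_2^\odot r_2\odot P_2^\odot s_2$ reorders into $\Phi(r_1\otimes_\Real r_2)\odot\Phi(s_1\otimes_\Real s_2)$, with the unit mapping to the unit. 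This is precisely the map of Proposition \ref{PowerLineBundleProduct1}, only now landing in the finer $\Int^2$-graded ring rather than being collapsed onto a single power, which is what will make it injective.

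The main obstacle is therefore injectivity. Because $\Phi$ is graded over $\Int^2$, it suffices to prove injectivity on each slice $(n,m)$, where it reads
\begin{equation}
    \Sec{L_1^n}\otimes_\Real\Sec{L_2^m}\longrightarrow\Sec{(p_1^*L_1)^n\otimes(p_2^*L_2)^m},\qquad \sum_i r_{1,i}\otimes_\Real r_{2,i}\longmapsto\sum_i P_1^\odot r_{1,i}\odot P_2^\odot r_{2,i}.
\end{equation}
I would take a given element written with the sections $r_{2,i}$ linearly independent over $\Real$ (possible since $\Real$ is a field), assume its image vanishes, and evaluate at an arbitrary point $B$ of $M_1\dtimes M_2$ lying over $(x,y)\in M_1\times M_2$, whose fibre is $L_{1,x}^n\otimes L_{2,y}^m$; there the value is $\sum_i r_{1,i}(x)\otimes r_{2,i}(y)$. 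Here the one-dimensionality of the fibres is decisive: choosing a nonzero $e\in L_{1,x}^n$ and writing $r_{1,i}(x)=a_i(x)\,e$, the vanishing becomes $e\otimes\bigl(\sum_i a_i(x)\,r_{2,i}(y)\bigr)=0$, and since tensoring by the nonzero $e$ is injective on the one-dimensional fibre this forces $\sum_i a_i(x)\,r_{2,i}=0$ in $\Sec{L_2^m}$; linear independence then gives $a_i(x)=0$, hence $r_{1,i}(x)=0$ for every $x$ and every $i$, so the element was already zero. This is the line-bundle-twisted analogue of the classical injection $\Cin{M_1}\otimes_\Real\Cin{M_2}\hookrightarrow\Cin{M_1\times M_2}$, the only extra care being the bookkeeping of the tensor powers over the $\Real^\times$-principal bundle $M_1\dtimes M_2$; everything else follows formally from functoriality of the power functor.
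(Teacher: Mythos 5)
Your proposal is correct, and its first half coincides with the paper's own proof: the paper defines exactly the same map $r_1\otimes_\Real r_2\mapsto P_1^\odot r_1\odot P_2^\odot r_2$, landing in bidegree $(n,m)$ and covering $\Id_{\Int^2}$, with multiplicativity following formally from functoriality of the power construction. Where you genuinely add something is injectivity. The paper's proof is essentially an assertion there: it records the isomorphism $\Sec{L_1,L_2}^\odot\cong\Sec{p_1^*L_1}^\odot\otimes_{\Cin{M_1\dtimes M_2}}\Sec{p_2^*L_2}^\odot$ and claims that taking $\odot$ in the $\Int^2$-graded product power ring, rather than collapsing bidegrees along $(n,m)\mapsto n+m$ as in Proposition \ref{PowerLineBundleProduct1}, \emph{makes} the map injective. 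But covering the identity dimension map only reduces the problem: it guarantees distinct slices cannot collide, so injectivity is equivalent to injectivity on each bidegree slice, and that slice-wise statement --- the line-bundle analogue of $\Cin{M_1}\otimes_\Real\Cin{M_2}\hookrightarrow\Cin{M_1\times M_2}$ --- is left implicit in the paper. Your pointwise argument supplies precisely this step, and it is sound: the values of $P_1^\odot r_1$ and $P_2^\odot r_2$ at a point $B_{xy}\in M_1\dtimes M_2$ are indeed $r_1(x)$ and $r_2(y)$ under the canonical identification of the pullback fibres, so writing the element with $\Real$-linearly independent second factors, stripping off a nonzero $e\in L_{1,x}^n$ using one-dimensionality of the fibre, and invoking linear independence forces every $r_{1,i}$ to vanish at every $x$. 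In short, your route and the paper's agree on the construction and on why the $\Int^2$-grading is the decisive difference from Proposition \ref{PowerLineBundleProduct1}; your version buys a self-contained proof of the remaining slice-wise injectivity, which the paper trades for brevity.
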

\begin{proof}
Note that the construction of the power dimensioned ring is such that:
\begin{equation}
    \Sec{L_1, L_2}^\odot \cong \Sec{p_1^*L_1}^\odot \otimes_{\Cin{M_1 \dtimes M_2}} \Sec{p_2^*L_2}^\odot,
\end{equation}
then the inclusion map is given similarly to the one in Proposition \ref{PowerLineBundleProduct1}
\begin{equation}
    r_1 \otimes_\Real r_2 \mapsto P_1^\odot r_1 \odot P_2^\odot r_2
\end{equation}
but now the $\odot$ multiplication is taken in the product power dimensioned ring, which makes the map injective and covering the identity dimension map $\Id:\Int^2\to \Int^2$.
\end{proof}

\subsection{Derivations and Jets} \label{DerJet}

The derivations of a dimensioned ring were introduced at the end of Section \ref{dimStruc} in direct analogy with ordinary derivations of rings. It follows from our remarks there that the derivations of the power dimensioned ring of a line bundle $\Sec{L}^\odot$, denoted by $\Dr{L^\odot}$ for short, contain the derivations of functions on the base manifold:
\begin{equation}
    \Dr{\Cin{M}}\cong\Sec{\Tan M}\subset \Dr{L^\odot}.
\end{equation}
In Section \ref{LineBundles} we argued that line bundle derivations were the unit-free generalization of sections of the tangent bundle, the next proposition shows that the derivations of the power of a line bundle naturally include the line bundle derivations.

\begin{prop}[Dimensionless Power Derivations] \label{DimensionlessPowerDerivations}
Let $\lambda:L\to M$ be a line bundle and $\Sec{L}^\odot$ its power, then there is an isomorphism of Lie algebras
\begin{equation}
\normalfont
    \Dr{L}\cong \Dr{L^\odot}_0.
\end{equation}
\end{prop}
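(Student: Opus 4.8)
The plan is to build the isomorphism explicitly by extending a line bundle derivation to all tensor powers and, conversely, by restricting a dimensionless power derivation to its degree-one slice. Recall that a derivation $D\in\Dr{L}$ carries a symbol vector field $X_D\in\Sec{\Tan M}$ determined by $D(f\cdot s)=f\cdot D(s)+X_D[f]\cdot s$. Since a dimensionless derivation $\Delta\in\Dr{L^\odot}_0$ covers $\Id_\Int$, it preserves each dimension slice and so amounts to a family of $\Real$-linear operators $\Delta_n:\Sec{L^n}\to\Sec{L^n}$. I would therefore define a map $\Phi:\Dr{L}\to\Dr{L^\odot}_0$ by setting $\Delta_0:=X_D$ on $\Sec{L^0}\cong\Cin{M}$, $\Delta_1:=D$ on $\Sec{L^1}=\Sec{L}$, and extending to every $n$ by forcing the $\odot$-Leibniz rule $\Delta_{n+m}(a\odot b)=\Delta_n(a)\odot b+a\odot\Delta_m(b)$.

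The first technical step is to check that this extension is consistent and globally defined. The degree $-1$ component is forced to be $\Delta_{-1}(\alpha)(s):=X_D[\alpha(s)]-\alpha(D(s))$; a short computation using the Leibniz identity for $D$ shows the right-hand side is $\Cin{M}$-linear in $s$, so $\Delta_{-1}(\alpha)$ is a genuine section of $L^{-1}=L^*$, and higher positive and negative powers are obtained by iterating Leibniz. To see that no inconsistency arises I would work in a local frame $e$ of $L$, write $D(e)=\theta\cdot e$ for some $\theta\in\Cin{U}$, and derive the explicit formula $\Delta_n(f\cdot e^{\odot n})=X_D[f]\cdot e^{\odot n}+n\theta f\cdot e^{\odot n}$ valid for all $n\in\Int$ (with $e^{\odot n}$ a local frame of $L^n$). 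These local expressions depend only on the globally defined data $X_D$ and $\theta$, so they patch to a well-defined dimensionless derivation. The heart of the verification is confirming the $\odot$-Leibniz rule for mixed products of positive and negative powers, where the multiplication incorporates the duality contraction $\alpha\odot s=\alpha(s)$; this is the step I expect to be the main obstacle, and it is settled by the defining formula for $\Delta_{-1}$ together with the local computation above.

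Conversely, I would define $\Psi:\Dr{L^\odot}_0\to\Dr{L}$ by $\Psi(\Delta):=\Delta_1$. Applying the $\odot$-Leibniz rule to $f\odot s=f\cdot s$ with $f\in\Sec{L^0}$ and $s\in\Sec{L^1}$ yields $\Delta_1(f\cdot s)=\Delta_0(f)\cdot s+f\cdot\Delta_1(s)$, so $\Delta_1$ is indeed a line bundle derivation with symbol $\Delta_0$. That $\Psi\circ\Phi=\Id$ is immediate. For $\Phi\circ\Psi=\Id$ I would argue that a dimensionless derivation is completely determined by its degree-one part: since derivations are local operators and $\Sec{L^n}$ is locally generated over $\Cin{M}$ by $\odot$-powers of a frame, the slice operators $\Delta_n$ are forced by Leibniz and hence coincide with the extension $\Phi(\Delta_1)$.

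Finally, both $\Dr{L}$ and $\Dr{L^\odot}_0$ carry the commutator bracket, the latter being the dimensionless restriction of the dimensioned commutator of Equation \ref{dimcomm}. The commutator of two dimensionless derivations is again dimensionless, and its degree-one component is the ordinary commutator $[\Delta_1,\Delta_1']=[D,D']$ of line derivations; combined with the uniqueness of the extension just established, this gives $\Phi([D,D'])=[\Phi D,\Phi D']$, so $\Phi$ is the claimed Lie algebra isomorphism.
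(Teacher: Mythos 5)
Your proposal is correct and takes essentially the same route as the paper's proof: restriction of a dimensionless derivation to its dimension-$0$ and dimension-$1$ slices in one direction, and extension of a line bundle derivation as a $\odot$-derivation in the other, with the dual-pairing formula $\Delta_{-1}(\alpha)(s)=X_D[\alpha(s)]-\alpha(D(s))$ handling negative powers and the mixed products $\alpha\odot s=\alpha(s)$ providing the key consistency check. You additionally make explicit three points the paper leaves implicit -- the local-frame verification that the Leibniz extension is well defined, the mutual inverseness of the two maps, and the preservation of the commutator bracket -- which strengthens rather than changes the argument.
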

\begin{proof}
We give the isomorphism by explicitly specifying the two invertible maps. Firstly we assign a dimensionless derivation to its restriction on the slices of dimension $0$ and $1$
\begin{equation}
    \Dr{L^\odot}_0\ni P\mapsto (P|_{L^0},P|_{L^1})=:(X,D)
\end{equation}
Since $P$ is a $\odot$-derivation, these satisfy
\begin{align}
    X(fg) &:=X(f\odot g)=X(f)\odot g+f\odot X(g)=X(f)g+fX(g)\\
    D(f\cdot s) &:= D(f\odot s)=X(f)\odot s+f\odot D(s)=X(f)\cdot s + f\cdot D(s),
\end{align}
thus showing that $D$ is a line bundle derivation with symbol $X$, as desired. Conversely, given a line bundle derivation $D\in \Dr{L}$ with symbol $X$, we need to define a dimensionless derivation $\Dr{L^\odot}_0$. This is accomplished by extending $D$ as a $\odot$-derivation for non-negative tensor powers of $L$ the following basic identities
\begin{align}
    P(f\odot g) &:= X[fg]=X[f]g+fX[g]\\
    P(f\odot s) &:= D(f\cdot s)=X[f]\cdot s + f\cdot D(s)\\
    P(s\odot r) &:= P(s)\odot r + s\odot P(s).
\end{align}
To account for negative tensor powers we use the isomorphism of Lie algebras $\Dr{L}\cong \Dr{L^*}$ given by assigning a derivation $D\in \Dr{L}$ the derivation $\Delta\in\Dr{L^*}$ defined by the Leibniz identity relative to the dual pairing:
\begin{equation}
    (\Delta \sigma) (s) := X[\sigma(s)]-\sigma(Ds)
\end{equation}
for all $s\in\Sec{L}$ and $\sigma\in\Sec{L^*}$. The derivation $\Delta$ is then similarly extended as a $\odot$-derivation for non-positive tensor powers. To complete the extension of $D$ to $P$ as a $\odot$-derivation, it only remains to account for products mixing positive and negative tensor powers. This follows from the consistency formula in the definition of the dimensioned ring product:
\begin{align}
    P(\sigma \odot s) &= P(\sigma(s))=X(\sigma(s))=\Delta(\sigma)(s)+\sigma(D(s))=\\
    &=\Delta(\sigma)\odot s + \sigma \odot D(s)=P(\sigma)\odot s+ \sigma \odot P(s).
\end{align}
\end{proof}
Dimensionless derivations do not determine all the derivations of a dimensioned ring; in general, derivations of the power of a line bundle are given by (infinite) collections of differential operators between all the tensor powers satisfying some compatibility conditions with the $\Int$-dimensioned structure. The derivations of the power dimensioned ring, naturally a dimensioned $\Sec{L}^\odot$-module, form a dimensioned Lie algebra with the commutator bracket and so we are compelled to regard the \textbf{dimensioned derivations} $(\Dr{L^\odot},[\,,])$ as the dimensioned analogue of vector fields. This analogy can be pushed further still so we can easily define the \textbf{dimensioned multiderivations} by constructing the obvious dimensioned $\Sec{L}^\odot$-modules of antisymmetric tensor powers $(\text{Der}^\bullet(L^\odot),\wedge)$ with a Schouten bracket defined via:
\begin{align}
    \llbracket D,D' \rrbracket &= [D,D']\\
    \llbracket D,r \rrbracket &= D(r)
\end{align}
for all $D\in\Dr{L^\odot}$ and $r\in\Sec{L}^\odot$. We call $(\text{Der}^\bullet(L^\odot),\wedge,\llbracket\,,\rrbracket)$ the \textbf{dimensioned Gerstenhaber algebra} of the line bundle $L$.\newline

Note that we are now in the position to develop the dimensioned analogue of \textbf{Cartan calculus} simply by dualising dimensioned derivations. We define \textbf{dimensioned 1-forms} as the $\Sec{L}^\odot$-dual of derivations:
\begin{equation}
    \Omega^1(L^\odot):=\left \{\Dr{L^\odot} \to \Sec{L^\odot}, \, \Sec{L}^\odot\text{-linear}\right \}
\end{equation}
which carry a natural $\Sec{L}^\odot$-module structure. The \textbf{dimensioned differential}:
\begin{equation}
    \delta: \Sec{L}^\odot \to \Omega^1(L^\odot)
\end{equation}
is defined in the standard way
\begin{equation}
    \delta r (D): = D(r)
\end{equation}
for all $r\in \Sec{L}^\odot$ and $D\in \Dr{L^\odot}$. By constructing the dimensioned $\Sec{L}^\odot$-modules of antisymmetric $k$-forms on derivations $\Omega^k(L^\odot)$ and extending $\delta$ as an exterior derivation in the obvious way, we obtain the \textbf{dimensioned de Rham complex}:
\begin{equation}
    (\Omega^\bullet(L^\odot),\wedge,\delta).
\end{equation}
The following proposition shows that, as was the case for dimensioned derivations, ordinary jets are recovered from dimensioned 1-forms.
\begin{prop}[Power Forms] \label{JetsPowerJets}
Let $\lambda:L\to M$ be a line bundle and $\Sec{L}^\odot$ its power, then there are natural inclusions of $\normalfont\Cin{M}$-modules
\begin{align}
\normalfont
    \Sec{(\Der L)^*}&\hookrightarrow \Omega^1(L^\odot)_{0}\\
    \normalfont \Sec{\Jet^1 L}&\hookrightarrow \Omega^1(L^\odot)_{+1}
\end{align}
Furthermore, the Lie algebroid de Rham complex $\normalfont \Omega^\bullet(\Der L)$ and the complex of Atiyah forms $\normalfont\Omega^\bullet(\Der L; L)$ are contained in the dimensioned de Rham complex:
\begin{equation}
    \normalfont\Omega^\bullet(\Der L) \subset \Omega^\bullet(L^\odot) \supset \Omega^\bullet(\Der L;L).
\end{equation}
\end{prop}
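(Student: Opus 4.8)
The plan is to build everything from the identification $\Dr{L}\cong\Dr{L^\odot}_0$ of Proposition \ref{DimensionlessPowerDerivations}, together with the fact that, locally, the dimensioned module $\Dr{L^\odot}$ is generated over $\Sec{L}^\odot$ by dimensionless derivations. First I would fix the dimension grading on $\Omega^1(L^\odot)$: since $\Dr{L^\odot}$ is a dimensioned $\Sec{L}^\odot$-module whose slice $\Dr{L^\odot}_d$ raises dimension by $d$, a dimensioned $1$-form $\alpha$ is homogeneous of dimension $m$ exactly when it sends $\Dr{L^\odot}_d$ into $\Sec{L^{m+d}}$. Restricting such an $\alpha$ to $\Dr{L^\odot}_0$ and transporting along Proposition \ref{DimensionlessPowerDerivations} yields a $\Cin{M}$-linear map $\Dr{L}\to\Sec{L^m}$, i.e. an element of $\Sec{(\Der L)^*\otimes L^m}$; taking $m=0$ gives $\Sec{(\Der L)^*}$ and $m=1$ gives $\Sec{(\Der L)^*\otimes L}=\Sec{\Jet^1 L}$, which already matches the two target slices in the statement.

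To produce the inclusions I would invert this correspondence. Given $\omega$, a $\Cin{M}$-linear map $\Dr{L}\to\Sec{L^m}$ with $m\in\{0,1\}$, define $\bar\omega$ on dimensionless derivations via Proposition \ref{DimensionlessPowerDerivations} and extend it to all of $\Dr{L^\odot}$ by $\Sec{L}^\odot$-linearity. The point that makes this extension legitimate is the local freeness of $\Dr{L^\odot}$ on dimensionless generators: picking a local unit and writing $t$ for the corresponding generator of $\Sec{L^1}$, a direct computation shows that every derivation of dimension $d$ is of the form $t^d\cdot(X+g\,\mathcal{E})$, where $X$ is a lift of a local vector field annihilating $t$, $g\in\Cin{M}$, and $\mathcal{E}$ is the Euler derivation, namely the dimensionless derivation corresponding to $\Id_L\in\Dr{L}$ and acting as multiplication by $n$ on $\Sec{L^n}$. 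Thus the generators are dimensionless, the value of $\bar\omega$ is forced by $\Sec{L}^\odot$-linearity, the local formulas are unit-independent and glue to a global dimensioned $1$-form, and the assignment $\omega\mapsto\bar\omega$ is an injective $\Cin{M}$-module map (injective because a dimensioned form is determined by its values on generators, $\Cin{M}$-linear because $\Cin{M}=\Sec{L^0}$ lies among the dimensionless scalars). This also exhibits restriction-to-$\Dr{L^\odot}_0$ as an inverse on each slice, so the inclusions are in fact bijections onto the indicated dimension slices.

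For the complex statement I would promote the construction to exterior powers: a degree-$k$ dimensioned form of dimension $m$ restricts on dimensionless derivations to a $\Cin{M}$-multilinear alternating map $\Dr{L}^{\times k}\to\Sec{L^m}$, that is, an element of $\Omega^k(\Der L;L^m)$, which is $\Omega^\bullet(\Der L)$ for $m=0$ and the Atiyah complex $\Omega^\bullet(\Der L;L)$ for $m=1$. It then remains to check that the dimensioned differential $\delta$ restricts on these slices to the respective classical differentials. This holds because $\delta r(D)=D(r)$ reproduces the anchor action of the symbol on $\Cin{M}$ and the tautological representation of $\Der L$ on $\Sec{L}$ in degree $0$, while the Koszul expansion of $\delta$ in higher degree involves the commutator bracket of $\Dr{L^\odot}$, which by Proposition \ref{DimensionlessPowerDerivations} restricts to the der-bundle Lie bracket on dimensionless derivations; hence $\delta$ preserves each dimension slice and agrees there with the Lie algebroid de Rham and Atiyah differentials. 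As a sanity check, $\delta s$ for $s\in\Sec{L^1}$ is the Atiyah form $D\mapsto D(s)$, i.e. the image of $j^1 s\in\Sec{\Jet^1 L}$, so the jet map is recovered as the restriction of $\delta$.

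The main obstacle I expect is precisely the local-generation and gluing step: establishing that $\Dr{L^\odot}$ is locally free over $\Sec{L}^\odot$ on dimensionless derivations — in particular isolating the Euler derivation $\mathcal{E}$ as the extra generator beyond lifted vector fields — and checking that the extension by $\Sec{L}^\odot$-linearity is independent of the chosen unit and patches globally. The second delicate point is the complex-level claim, where one must verify that $\delta$ genuinely restricts to these two classical differentials on the dimension-$0$ and dimension-$1$ slices, and not merely that it preserves the slices.
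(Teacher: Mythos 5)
Your proposal is correct and follows the same backbone as the paper's proof: both hinge on Proposition \ref{DimensionlessPowerDerivations} to identify $\Dr{L^\odot}_0\cong\Dr{L}\cong\Sec{\Der L}$, both read off the two slices by restricting dimensioned forms to dimensionless derivations, and both settle the complex-level claim by matching the differentials in degree $0$ (anchor/symbol for dimension $0$, jet map for dimension $+1$) and propagating to higher degrees. The difference is one of completeness rather than route. The paper simply states that, after the identification of dimensionless derivations, ``the inclusions follow trivially''; it never addresses how a form defined only on $\Dr{L^\odot}_0$ determines a genuine element of $\Omega^1(L^\odot)$, i.e.\ an $\Sec{L}^\odot$-linear functional on \emph{all} dimensioned derivations. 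You isolate exactly this as the crux and supply the missing lemma: the local normal form $t^d\cdot(X+g\,\mathcal{E})$ showing that $\Dr{L^\odot}$ is locally generated over $\Sec{L}^\odot$ by dimensionless derivations (with the Euler derivation as the extra generator), plus the unit-independence and gluing check. That lemma is what makes extension by $\Sec{L}^\odot$-linearity legitimate, and it buys you a strictly stronger conclusion than the paper states, namely that the inclusions are bijections onto the indicated dimension slices. For higher degrees you argue via the Koszul formula and the compatibility of the commutator bracket with Proposition \ref{DimensionlessPowerDerivations}, whereas the paper argues that all three differentials are extended as $\wedge$-derivations from degree $0$; both are sound, though your version makes the slice-preservation of $\delta$ explicit rather than implicit. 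In short: same strategy, but your write-up closes a step the paper waves through and sharpens the statement.
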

\begin{proof}
Note that restricting dimensioned jets to dimensionless derivations gives:
\begin{align}
    \Omega^1(L^\odot)_{0}|_{\Dr{L^\odot}_0} &=\{\Dr{L^\odot}_0\to \Cin{M}\}\\
    \Omega^1(L^\odot)_{+1}|_{\Dr{L^\odot}_0} &=\{\Dr{L^\odot}_0\to \Sec{L}\},
\end{align}
then, since Proposition \ref{DimensionlessPowerDerivations} ensures that $\Dr{L^\odot}_0\cong \Dr{L}\cong \Sec{\Der L}$, the inclusions follow trivially. A similar argument applies for $k$-forms which gives the inclusions:
\begin{align}
    \Omega^k(\Der L)&\hookrightarrow \Omega^k(L^\odot)_{0}\\
    \Omega^k(\Der L;L)&\hookrightarrow \Omega^k(L^\odot)_{+1}.
\end{align}
Since the wedge product is defined identically for all three complexes, it only remains to check that the dimensioned differential $\delta$ restricts to the correct differential maps on each slice. This follows by construction: firstly, since the anchor map of the der bundle regarded as a Lie algebroid $\rho: \Der L\to \Tan M$ is precisely the map that sends derivations to symbols we have
\begin{equation}
    \delta|_{\Omega^0(L^\odot)_{0}}=d_{\Der L}=\rho^* \circ d: \Cin{M}\to \Sec{\Der L}
\end{equation}
and, secondly, since the action of jets on derivations is given by the dual pairing we have
\begin{equation}
    \delta|_{\Omega^0(L^\odot)_{+1}}=d_L=j^1: \Sec{L}\to \Sec{\Jet^1 L},
\end{equation}
thus showing that the differentials agree at degree $0$. Finally, since $\delta$, $d_{\Der L}$ and $d_L$ are all extended as $\wedge$-derivations for higher degrees, their agreement at degree $0$ implies agreement at all degrees.
\end{proof}

The results presented in the last couple of sections firmly establish dimensioned rings and modules as the natural algebraic counterparts to line bundles. Propositions \ref{PowerFunctorLineBundles} to \ref{JetsPowerJets} indicate that the oddities of the categories typically associated with line bundles, what we summarised in Section \ref{LineBundles} as the unit-free formalism, disappear when promoting them to the dimensioned setting. Broadly speaking, by paying the price that additive structures are now partially defined -- something that could be regarded as a positive if one is interested in recovering the dimensional analysis of physical quantities -- all the notions of ordinary manifold theory, i.e. products, submanifolds, functions, vector fields, cohomology, etc., promote into the context of line bundles mutatis mutandis via their dimensioned analogues.

\section{Dimensioned Algebras from Jacobi Manifolds} \label{JacToDim}

In this section we show how a Jacobi structure on a line bundle is mapped to a dimensioned Poisson algebra under the power functor, thus generalising the assignment of Poisson algebras to the rings of functions of ordinary Poisson manifolds. The analogous constructions for Poisson products and reductions are also recovered in the dimensioned picture, a clear improvement from the standard formulation using ordinary modules of sections. We shall build on the approach to Jacobi manifolds developed in the prequel paper \cite{zapata2020unitfree}, of which we summarise the main points here for convenience.\newline

A \textbf{Jacobi manifold} is a line bundle $\lambda:L\to M$ whose sections carry a local Lie algebra structure $(\Sec{L},\{\,,\})$. This means that there are symbol maps $X$ and $\Lambda$ such that:
\begin{equation}
    \{f\cdot s, g\cdot r\} = fg\cdot\{s,r\}+f X_s[g]\cdot r - g X_r[f]\cdot s +\Lambda(df\otimes s, dg \otimes r)
\end{equation}
for all $s,r\in \Sec{L}$ and $f,g\in\Cin{M}$. The Jacobi identity of the Lie bracket implies some compatibility conditions between the symbol maps.
\begin{prop}[Symbols of a Jacobi Bracket]\label{JacobiSymbol}
Let $\lambda:L\to M$ be a line bundle and let $\Sigma\subset \Sec{L}$ be a subspace of spanning sections such that $\normalfont\Sec{L}\cong \Cin{M}\cdot \Sigma$. The datum of a Jacobi structure $(\Sec{L},\{,\})$ is equivalent to a triple $((\Sigma,[\,,]),X,\Lambda)$ where
\begin{itemize}
\normalfont
    \item $(\Sigma,[,])$ is a $\Real$-linear Lie bracket,
    \item $X:\Sec{L}\to \Sec{\Tan M}$ is a $\Real$-linear map and,
    \item $\Lambda\in\Sec{\wedge^2\Tan^{L} M\otimes L}$ inducing a vector bundle morphism $\Lambda^\sharp:\Tan^{*L} M\to \Tan M$
\end{itemize}
satisfying the compatibility conditions
\begin{align*}
    1.\quad & [X_s,X_{r}]=X_{[s,r]},\\
    2.\quad & X_{f\cdot s}=f\cdot X_s+\Lambda^\sharp(df\otimes s), \\
    3.\quad & [X_s,\Lambda^\sharp(df\otimes r)]=\Lambda^\sharp(dX_{s}[f]\otimes r + df\otimes [s,r]),\\
    4.\quad & \Lambda(df\otimes s, \Lambda(dg \otimes r , dh\otimes t))+\Lambda(dg\otimes r, \Lambda(dh \otimes t , df\otimes s))+\Lambda(dh\otimes t, \Lambda(df \otimes s , dg\otimes r))\\
    =&X_r[f]\cdot \Lambda( dh\otimes t, dg\otimes s)+X_t[g]\cdot \Lambda(df\otimes s, dh\otimes r)+X_s[h]\cdot \Lambda(dg\otimes r,df\otimes t),
\end{align*}
for all $\normalfont f,g,h\in\Cin{M}$ and $s,r,t\in\Sigma$.
\end{prop}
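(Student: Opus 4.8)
The plan is to read the displayed expansion of $\{f\cdot s,g\cdot r\}$ as the generic shape of a first-order local Lie bracket on $\Sec L$ and to obtain the equivalence by matching, on both sides of the Jacobi identity, the terms according to how many of the coefficient functions are differentiated. For the forward implication, starting from a Jacobi bracket $\{\,,\}$, I would set $[s,r]:=\{s,r\}$ for $s,r\in\Sigma$; restricting an $\Real$-bilinear antisymmetric bracket satisfying Jacobi preserves these properties, so $(\Sigma,[\,,])$ is a Lie bracket once $\Sigma$ is taken closed under the bracket (arranged locally by choosing $\Sigma$ as the $\Real$-span of a frame). The symbol $X_s$ is the vector field determined by $\{s,f\cdot r\}-f\cdot\{s,r\}=X_s[f]\cdot r$, and $\Lambda(df\otimes s,dg\otimes r)$ is the part of $\{f\cdot s,g\cdot r\}$ that differentiates both $f$ and $g$; one checks that $\Lambda$ is $\Cin M$-bilinear and antisymmetric, hence a genuine section of $\wedge^2\Tan^{L}M\otimes L$ with induced bundle map $\Lambda^\sharp$. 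Condition 2 is immediate from this shape: isolating in $\{f\cdot s,g\cdot r\}$ the part proportional to $r$ with $g$ differentiated once gives $(f\cdot X_s+\Lambda^\sharp(df\otimes s))[g]\cdot r$, so the symbol of $f\cdot s$ is $X_{f\cdot s}=f\cdot X_s+\Lambda^\sharp(df\otimes s)$.

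The heart of the argument is the expansion of the cyclic Jacobi identity
\begin{equation*}
\{f\cdot s,\{g\cdot r,h\cdot t\}\}+\{g\cdot r,\{h\cdot t,f\cdot s\}\}+\{h\cdot t,\{f\cdot s,g\cdot r\}\}=0,
\end{equation*}
obtained by substituting the displayed formula twice and sorting the resulting terms by differential type in $f,g,h$, using condition 2 throughout to rewrite the symbols of rescaled sections. The terms in which no coefficient is differentiated reproduce the Jacobi identity for $[\,,]$ on $\Sigma$. The iterated action of the symbols on a single function has its symmetric second-order part cancel automatically, leaving commutators of vector fields that must match the symbol of the section bracket, i.e. condition 1, $[X_s,X_r]=X_{[s,r]}$. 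The terms coupling one $X$ with one $\Lambda^\sharp$ collapse to condition 3, and the terms in which two of the functions are each differentiated once through $\Lambda$ reassemble into the cyclic $\Lambda$-identity of condition 4. Since conditions 1, 3 and 4 exhaust the surviving differential types, they are together equivalent to the Jacobi identity.

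For the converse I would run this bookkeeping in reverse: define $\{f\cdot s,g\cdot r\}$ on $\Sec L$ by the displayed formula from the data $([\,,],X,\Lambda)$, and use condition 2 to verify that the right-hand side is independent of the spanning decomposition of a section, so that the bracket is well-defined; $\Real$-bilinearity and antisymmetry follow from the corresponding symmetries of $[\,,]$ and $\Lambda$. Re-grouping the same terms as above then shows that conditions 1, 3 and 4 supply exactly the vanishing of each surviving differential type in the cyclic sum, whence the Jacobi identity holds and $(\Sec L,\{\,,\})$ is a Jacobi structure.

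I expect the main obstacle to be the combinatorial bookkeeping of this central step: substituting the bracket into itself produces on the order of two dozen terms, and the delicate part is to classify them unambiguously by the derivative pattern on $f,g,h$, to see the symmetric second-order contributions cancel, and only then to match the first- and zeroth-order remainders with conditions 1, 3 and 4. Keeping the $L$-twisting of $\Lambda$, $\Lambda^\sharp$ and the various pairings consistent—so that every term is correctly $L$-valued—is the accompanying source of technical care.
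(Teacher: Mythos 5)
Your overall strategy -- expand the cyclic Jacobi identity on $f\cdot s$, $g\cdot r$, $h\cdot t$, sort terms by the derivative pattern on the coefficient functions, and match the surviving types with conditions 1, 3, 4 -- is the standard argument for this kind of statement. Note that the paper itself does not prove Proposition \ref{JacobiSymbol}: its ``proof'' is a citation to the prequel, so the only in-paper material to compare with is Appendix \ref{CalculationsDimensionedPoissonBracket}, which runs exactly your bookkeeping in the reverse direction (conditions 1, 3, 4 $\Rightarrow$ Jacobi identity for the dimensioned bracket). Your forward direction is sound in outline: the zeroth-order terms give Jacobi for the restricted bracket, the second-order symbol terms cancel leaving condition 1, and the mixed and double-$\Lambda$ types give conditions 3 and 4; condition 2 is read off the defining expansion as you say.

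There is, however, a genuine gap in your converse, and a related one in your forward direction. For the converse you claim that condition 2 makes the reconstruction formula independent of the decomposition of a section; it does not. Condition 2 only controls the terms $f_iX_{s_i}[g]\cdot r+\Lambda(df_i\otimes s_i,dg\otimes r)$, which assemble into $X_{\sum_i f_i s_i}[g]\cdot r$. The remaining terms require the separate compatibility: whenever $\sum_i f_i s_i=0$ one needs $\sum_i\bigl(f_i[s_i,r]-X_r[f_i]\cdot s_i\bigr)=0$, i.e.\ that $[-,r]$ extends to a first-order operator with symbol $-X_r$, and this is \emph{not} a consequence of conditions 1--4. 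Concretely: take $M=\Real$, $L$ trivial with unit section $u$, $\Sigma=\text{span}_\Real\{u,xu\}$, $[u,xu]:=xu$, $X\equiv 0$, $\Lambda\equiv 0$. All four conditions hold (every term involving $X$ or $\Lambda$ vanishes, and any two-dimensional bracket satisfies Jacobi), yet the formula yields $\{u,xu\}=[u,xu]=xu$ when $xu$ is read as an element of $\Sigma$, but $\{u,x\cdot u\}=x[u,u]+X_u[x]\cdot u=0$ when it is read as $x\cdot u$; so the bracket is ill-defined and no Jacobi structure realises this triple. Relatedly, in the forward direction $\{\Sigma,\Sigma\}\subset\Sigma$ is not automatic for the $\Sigma$ fixed in the statement, and your patch -- re-choosing $\Sigma$ as the $\Real$-span of a local frame -- both replaces the given $\Sigma$ and is unavailable globally when $L$ is non-trivialisable. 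A correct proof must confront this point explicitly: either work with a $\Sigma$ for which the relation-compatibility holds and which is closed under the bracket (e.g.\ the span of a single frame section, where antisymmetry gives closure and freeness kills all relations), or add the relation-compatibility as an explicit hypothesis; as stated, it is the genuinely delicate content hiding behind what you describe as combinatorial bookkeeping.
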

\begin{proof}
\cite[Prop. 3.1.1]{zapata2020unitfree}.
\end{proof}

The fact that the Lie bracket $\{\,,\}$ is a derivation on each argument allows us to write
\begin{equation}
    \{s,r\}=\Pi(j^1s,j^1r) \quad \text{ with }\quad  \Pi\in \text{Der}^2(L).
\end{equation}
The bilinear form $\Pi$, called the \textbf{Jacobi biderivation} in analogy with the the Poisson bivector in ordinary Poisson manifolds, naturally induces a musical map $\Pi^\sharp:\Jet^1 L \to \Der L$. Let $(\Sec{L_1},\{\,,\}_1)$ and $(\Sec{L_2},\{\,,\}_2)$ be two Jacobi manifolds, a line bundle morphism $B:L_1\to L_2$ is called a \textbf{Jacobi map} when its pull back
\begin{equation}
    B^*:(\Sec{L_2},\{\,,\}_2)\to (\Sec{L_1},\{\,,\}_1)
\end{equation}
is a Lie algebra morphism. We can define coisotropic submanifolds of Jacobi manifolds in direct analogy with coisotropic submanifolds of Poisson manifolds. A submanifold $i:S\hookrightarrow M$ is called \textbf{coisotropic} when the annihilator of its der bundle $(\Der L_S)^{0L}\subset \Jet^1L$ is an isotropic subbundle with respect to the biderivation $\Pi$, i.e.
\begin{equation}
    \Pi^\sharp_x((\Der_xL_S)^{0L})\subset \Der_xL_S \qquad \forall x\in S.
\end{equation}
Coisotropic submanifolds are of vital importance for the physical applications of Jacobi geometry since they provide the prime example of a reduction scheme. Let a line bundle $\lambda:L\to M$ with a Jacobi structure $(\Sec{L},\{,\})$ and consider a submanifold $i:C\hookrightarrow M$ with its corresponding embedding factor $\iota: L_C\to L$. For another line bundle $\lambda':L'\to M'$, assume there exists a submersion factor $\pi:L_C\to L'$ covering a surjective submersion $p:C\twoheadrightarrow M'$. Then, we say that a Jacobi structure $(\Sec{L},\{,\})$ \textbf{reduces} to the Jacobi structure $(\Sec{L'},\{\,\}')$ via $\pi:L_C\to L'$ when for all pairs of sections $s_1,s_2\in\Sec{L'}$ the identity
\begin{equation*}
    \pi^*\{s_1,s_2\}'=\iota^*\{S_1,S_2\}
\end{equation*}
holds for all choices of extensions $S_1,S_2$, i.e all choices of sections $S_1,S_2\in\Sec{L}$ satisfying 
\begin{equation*}
    \pi^*s_1=\iota^*S_1 \qquad \pi^*s_2=\iota^*S_2.
\end{equation*}
The following proposition shows that, in analogy with the case of reduction of Poisson manifolds, coistropic submanifolds play a distinguished role in the reduction of Jacobi manifolds.

\begin{prop}[Coisotropic Reduction of Jacobi Manifolds] \label{CoisotropicReductionJacobi}
Let $\lambda:L\to M$ and $\lambda':L'\to M'$ be line bundles,  $(\Sec{L},\{,\})$ be a Jacobi structure, $i:C\hookrightarrow M$ a closed coisotropic submanifold and $\pi:L_C\to L'$ a submersion factor covering a surjective submersion $p:C\twoheadrightarrow M'$ so that we have the reduction diagram:
\begin{equation*}
    \begin{tikzcd}[sep=tiny]
    L_C \arrow[rr,"\iota"] \arrow[dd,"\pi"'] \arrow[dr]& & L \arrow[dr] & \\
    & C \arrow[rr,"i"', hook] \arrow[dd, "p",twoheadrightarrow] & & M \\
    L' \arrow[dr] & & & \\
    & M' &  & 
    \end{tikzcd}
\end{equation*}
Assume the following compatibility condition between the coisotropic submanifold and the submersion factor:
\begin{equation*}
\normalfont
    \delta(\Ker{\Der \pi})=\Lambda^\sharp((\Tan C)^{\text{0}L}),
\end{equation*}
where $\normalfont\delta:\Der L \to \Tan M$ is the anchor of the der bundle, $\normalfont(\Tan C)^{0L}\subset \Tan^{*L}M$ is the annihilator and $\normalfont\Lambda^\sharp: \Tan^{*L}M\to \Tan M$ is the squiggle, then there exists a unique Jacobi structure $(\Sec{L'},\{,\}')$ such that $(\Sec{L},\{,\})$ reduces to it via the submersion factor $\pi:L_C\to L'$.
\end{prop}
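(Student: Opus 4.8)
The plan is to imitate the classical coisotropic (Marsden--Ratiu) reduction of Poisson manifolds in the line-bundle setting: define the reduced bracket by a pullback relation and then isolate the two conditions — projectability and extension-independence — whose verification consumes the hypotheses. First I would record the two facts about $\iota^*:\Sec{L}\to\Sec{L_C}$ and $\pi^*:\Sec{L'}\to\Sec{L_C}$ that set up the construction: since $C$ is a closed embedded submanifold, $\iota^*$ is surjective (bump-function/tubular-neighbourhood extension), and since $p$ is a surjective submersion covered by the submersion factor $\pi$, the map $\pi^*$ is injective with image the \emph{$\pi$-basic} sections, i.e. those annihilated by every $\pi$-vertical derivation in $\Ker{\Der \pi}$. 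For $s_1,s_2\in\Sec{L'}$ I would then pick extensions $S_1,S_2\in\Sec{L}$ with $\iota^*S_i=\pi^*s_i$ and \emph{define} $\{s_1,s_2\}'$ by the relation $\pi^*\{s_1,s_2\}'=\iota^*\{S_1,S_2\}$; everything reduces to showing the right-hand side is $\pi$-basic and independent of the chosen extensions.

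The heart of the argument, and the step where the hypotheses enter, is this well-definedness. Writing $\Gamma_C:=\Ker{\iota^*}$ for the vanishing submodule, independence under a change of extension of $s_1$ amounts to $\iota^*\{V,S\}=0$ for every $V\in\Gamma_C$ and every extension $S$ of some $s\in\Sec{L'}$. I would expand $\{V,S\}=\Pi^\sharp(j^1 V)[S]$ through the biderivation. Because $V$ vanishes on $C$, its jet along $C$ is a pure symbol lying in $(\Tan C)^{0L}$, which a rank count identifies with the jet-level annihilator $(\Der L_C)^{0L}$; coisotropy then places the Hamiltonian derivation $\Pi^\sharp(j^1 V)|_C$ inside $\Der L_C$, so that it differentiates only along $C$ and its action on $S$ depends solely on $\iota^*S=\pi^*s$, giving $\iota^*\{V,S\}=\bigl(\Pi^\sharp(j^1 V)|_C\bigr)[\pi^*s]$. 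The compatibility condition is what forces this Hamiltonian derivation to be $\pi$-vertical: its anchor is $\delta\bigl(\Pi^\sharp(j^1 V)|_C\bigr)=\Lambda^\sharp(j^1 V|_C)\in\Lambda^\sharp((\Tan C)^{0L})=\delta(\Ker{\Der \pi})$, and, upgrading this to the full derivation, $\Pi^\sharp(j^1 V)|_C\in\Ker{\Der \pi}$ annihilates the basic section $\pi^*s$, so $\iota^*\{V,S\}=0$. Projectability then follows formally: for $Z\in\Ker{\Der \pi}$ I would use the compatibility condition in the reverse direction to write $Z=\Pi^\sharp(j^1 V_Z)|_C$ with $V_Z\in\Gamma_C$, and the Jacobi identity gives $Z[\{S_1,S_2\}|_C]=\{\{V_Z,S_1\},S_2\}|_C+\{S_1,\{V_Z,S_2\}\}|_C$, both terms vanishing by the independence just proved (note $\{V_Z,S_i\}\in\Gamma_C$). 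Thus $\{S_1,S_2\}|_C$ is $\pi$-basic.

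With well-definedness in hand, uniqueness is immediate from injectivity of $\pi^*$: the relation $\pi^*\{s_1,s_2\}'=\iota^*\{S_1,S_2\}$ determines $\{s_1,s_2\}'$ with no freedom. It remains to confirm $(\Sec{L'},\{\,,\}')$ is a Jacobi structure. The $\Real$-bilinearity and antisymmetry descend termwise, and the Jacobi identity for $\{\,,\}'$ follows by applying the defining relation to extensions of three sections and invoking the Jacobi identity of $\{\,,\}$ upstairs. Finally, since $\{\,,\}'$ is realised as a pushforward of the first-order biderivation $\{\,,\}$, it is again a first-order bidifferential operator, so by Proposition \ref{JacobiSymbol} it is a genuine Jacobi bracket.

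The main obstacle is the upgrade highlighted in the second paragraph: passing from the anchor-level statement $\delta\bigl(\Pi^\sharp(j^1 V)|_C\bigr)\in\delta(\Ker{\Der \pi})$ to the assertion that $\Pi^\sharp(j^1 V)|_C$ lies in $\Ker{\Der \pi}$ as a full derivation. In Jacobi geometry the characteristic directions cut out by $\Pi^\sharp((\Der L_C)^{0L})$ generically carry a nonzero scaling (core) component along the fibre of $\Der L$, whereas $\Ker{\Der \pi}$ is the scaling-free lift of $\Ker{\Tan p}$; matching the two therefore requires controlling this core part and not merely the anchor. The careful bookkeeping — showing that coisotropy together with the compatibility hypothesis identifies the characteristic der-distribution $\Pi^\sharp((\Der L_C)^{0L})$ with the vertical der-distribution $\Ker{\Der \pi}$ in full, scaling part included — is where the real work lies, and it is exactly this identification that both the independence and projectability arguments tacitly rely on. I would also record, for the sequel, that under the power functor $\Gamma_C$ becomes the vanishing dimensioned ideal of Proposition \ref{DimVanishingIdeal} and that this construction is an instance of the dimensioned Poisson reduction of Proposition \ref{DimensionedPoissonReduction}.
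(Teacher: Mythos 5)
Your architecture is the right one --- indeed it is the only sensible one, and the paper itself offers no internal argument (it defers to the prequel), so any proof must do what you outline: define $\{s_1,s_2\}'$ through extensions, reduce both extension-independence and projectability to the single der-level inclusion $\Pi^\sharp((\Der L_C)^{0L})\subset\Ker{\Der\pi}$, and get uniqueness from injectivity of $\pi^*$. The gap is the step you yourself flag in your final paragraph: the ``upgrade'' from the anchor-level hypothesis to that der-level inclusion is never carried out, and --- this is the substantive point --- it is not deferred bookkeeping, it is false as an implication. The stated hypothesis $\delta(\Ker{\Der\pi})=\Lambda^\sharp((\Tan C)^{0L})$ sees $\pi$ only through the submersion $p$: every factor covering the same $p$ has $\delta(\Ker{\Der\pi})=\Ker{\Tan p}$. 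By contrast, the inclusion you need is sensitive to the fibre-wise scaling of $\pi$. Concretely, coisotropy plus compatibility let you write $\Pi^\sharp(j^1V)|_C=Z_V+c_V\,\Id$ with $Z_V\in\Ker{\Der\pi}$ and $c_V\in\Cin{C}$, whence $\iota^*\{V,S\}=c_V\cdot\pi^*s$; nothing in the hypotheses constrains the core component $c_V$, and your independence and projectability arguments both silently assume $c_V=0$.

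That this really fails: on $M=\{(x,y,z)\in\Real^3:\ y>0\}$ take the trivial line bundle with the contact Jacobi bracket of $\eta=dz-y\,dx$, i.e. $\{f,g\}=\Lambda(df,dg)+fE[g]-gE[f]$ with $\Lambda=\partial_y\wedge(\partial_x+y\,\partial_z)$ and $E=\partial_z$. The hypersurface $C=\{z=0\}$ is closed and coisotropic, with characteristic distribution $\Lambda^\sharp((\Tan C)^{0L})=\mathrm{span}(y\,\partial_y)$; take $p(x,y)=x$, $M'=\Real$, and let $\pi$ be the trivial factor, $\pi^*s=p^*s$. Every hypothesis of the proposition holds, yet for the vanishing section $V=z$ and the extension $S=p^*s$ one computes $\iota^*\{V,S\}=\Lambda^\sharp(dz)[p^*s]|_C-p^*s\,E[z]|_C=-p^*s\neq 0$, so $\iota^*\{S_1,S_2\}$ depends on the choice of extensions and no reduced Jacobi structure exists via this $\pi$. (The required inclusion does hold for the rescaled factor $\pi^*s=y^{-1}p^*s$, for which $\Ker{\Der\pi}=\mathrm{span}(y\,\partial_y+\Id)=\Pi^\sharp((\Der L_C)^{0L})$, and reduction then goes through.) So the hypothesis your proof actually needs --- and which should replace the stated symbol-level one --- is the der-level identity $\Ker{\Der\pi}=\Pi^\sharp((\Der L_C)^{0L})$. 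With that strengthening, your two main paragraphs work essentially verbatim (modulo the secondary point that identifying the image of $\pi^*$ with the sections annihilated by $\Ker{\Der\pi}$ also needs connected $p$-fibres); without it, the well-definedness step fails and the construction collapses.
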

\begin{proof}
\cite[Prop. 3.2.4]{zapata2020unitfree}.
\end{proof}

\subsection{The Dimensioned Poisson Algebra of a Jacobi Manifold} \label{dimPoiss}

Firstly we identify a dimensioned Poisson bracket on the power ring of a Jacobi manifold.

\begin{thm}[Dimensioned Poisson Algebra of a Jacobi Manifold] \label{DimPoissonAlgebraJacobi}
Let $\lambda:L\to M$ be a line bundle and $(\Sec{L},\{\,,\})$ a Jacobi structure, then there exists a unique dimensioned Poisson algebra of dimension $-1$ on the power dimensioned ring $(\Sec{L}^\odot_\Int,+_\Int,\odot_0,\{\,,\}_{-1})$ such that the brackets combining elements of dimensions $+1$, $0$, and $-1$ are determined by the Jacobi bracket $\{\,,\}$ and its symbols $X$, $\Lambda$.
\end{thm}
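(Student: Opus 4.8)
The plan is to define the bracket on the low-dimensional generators of $\Sec{L}^\odot$ using the Jacobi bracket and its symbols, to extend it to the whole power ring by the dimensioned Leibniz rule, and to verify the dimensioned Poisson axioms. Uniqueness is the easiest part and I would dispatch it first: axiom 3 (Leibniz) together with antisymmetry forces $\{a,\cdot\}$ and $\{\cdot,b\}$ to be $\odot$-derivations of degrees $\deg a-1$ and $\deg b-1$, and since every section of $L^n$ is, at least locally, a sum of $\odot$-products of sections of $L^{\pm 1}$ (with $\Cin{M}=\Sec{L^0}$ recovered from $\sigma\odot s=\sigma(s)$), the bracket is completely determined by its prescribed values on pairs of elements of the generating slices $\Sec{L^{1}}$ and $\Sec{L^{-1}}$. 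Hence there is at most one degree $-1$ dimensioned Poisson bracket extending the given data.

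For existence I would start from the Hamiltonian derivations. For $s\in\Sec{L}$ the operator $D_s:=\{s,\cdot\}$ is a line bundle derivation of $L$ with symbol $X_s$, so by Proposition \ref{DimensionlessPowerDerivations} it extends uniquely to a dimensionless derivation $P_s\in\Dr{L^\odot}_0$; I set $\{s,\cdot\}:=P_s$, which has the correct degree $0=(+1)+(-1)$. The key lemma is that $s\mapsto P_s$ is a morphism of Lie algebras, i.e. $P_{\{s,r\}}=[P_s,P_r]$: both sides are dimensionless derivations, and on $\Sec{L^1}$ they agree because $[P_s,P_r](t)=\{s,\{r,t\}\}-\{r,\{s,t\}\}=\{\{s,r\},t\}$ by the Jacobi identity of the Jacobi bracket, so equality on all slices follows from the uniqueness half of Proposition \ref{DimensionlessPowerDerivations}. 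This lemma immediately yields the Jacobi identity of the extended bracket whenever at least two of the three arguments lie in $\Sec{L^1}$, since then the Jacobiator collapses to $([P_s,P_r]-P_{\{s,r\}})(a)=0$.

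I would then pin down the remaining generator brackets, which are forced by Leibniz and recover exactly the symbol data: expanding $X_s[\sigma(r)]=\{s,\sigma\odot r\}$ gives $\{s,\sigma\}(r)=X_s[\sigma(r)]-\sigma(\{s,r\})$, so $\{s,\cdot\}$ on $\Sec{L^{-1}}$ is the dual derivation; and comparing the two Leibniz expansions of $\{f\cdot s,g\cdot r\}$ against the defining identity of the Jacobi bracket shows that the genuinely new bracket of two functions is the $\Sec{L^{-1}}$-valued pairing determined by $\{f,g\}(s)=\Lambda^\sharp(df\otimes s)[g]$, which is well defined as a section of $L^*$ precisely because $\Lambda^\sharp$ is $\Cin{M}$-linear in its line argument. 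With all generator brackets in place, the full bracket is defined by $\odot$-biderivation extension and antisymmetry is inherited slice by slice.

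The main obstacle is the Jacobi identity in full generality, i.e. for triples of generators involving two or three negative-degree sections, where the Hamiltonian-morphism lemma no longer applies. Here I would use that the Jacobiator $J(a,b,c)$ is a triderivation in the dimensioned sense — a consequence of bilinearity, antisymmetry and Leibniz whose dimension bookkeeping closes because $\odot$ has degree $0$ — so it suffices to check $J\equiv 0$ on the generating set $\Sec{L^{1}}\cup\Sec{L^{-1}}$. The cases with at least two entries in $\Sec{L^1}$ are already done; the mixed and all-dual cases reduce, after dualising via $\sigma\odot s=\sigma(s)$, to the compatibility conditions 1--4 of Proposition \ref{JacobiSymbol}, the triple-$\Lambda$ condition 4 being the substantive one. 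Equivalently, and more conceptually, one identifies $\Pi$ with a degree $-1$ dimensioned biderivation $\mathcal{P}\in\text{Der}^2(L^\odot)$ through Propositions \ref{DimensionlessPowerDerivations} and \ref{JetsPowerJets}, writes $\{a,b\}=\mathcal{P}(\delta a,\delta b)$ with $\delta$ the dimensioned differential, and observes that the dimensioned Jacobi identity is equivalent to $\llbracket\mathcal{P},\mathcal{P}\rrbracket=0$ in the dimensioned Gerstenhaber algebra, which transfers directly from the vanishing Schouten square $\llbracket\Pi,\Pi\rrbracket=0$ encoding the original Jacobi structure.
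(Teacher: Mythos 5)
Your proof is correct and its core is the same as the paper's: read the generator brackets off the Jacobi bracket and its symbols ($\{a,f\}=X_a[f]$, $\{f,g\}(a)=\Lambda^\sharp(df\otimes a)[g]$, the dual derivation $\Delta_a$ on negative powers), extend as $\odot$-derivations, and reduce the Jacobi identity to the compatibility conditions of Proposition \ref{JacobiSymbol}. Your organisation is genuinely different in two respects, both gains in economy. First, you make uniqueness explicit (locality of $\odot$-derivations plus the fact that $\Sec{L^{\pm 1}}$ generates the power ring locally), whereas the paper leaves uniqueness implicit in the Leibniz-extension construction. Second, your key lemma --- $P_{\{s,r\}}=[P_s,P_r]$, checked on $\Sec{L^1}$ and propagated to all slices via the uniqueness half of Proposition \ref{DimensionlessPowerDerivations} (agreement on $\Sec{L^1}$ forces agreement of symbols, hence on $\Sec{L^0}$) --- kills every Jacobiator with two arguments in $\Sec{L}$ and a third argument of arbitrary dimension in one stroke; the paper instead computes the $(1,1,1)$, $(1,1,0)$, $(1,0,0)$ and $(0,0,0)$ cases separately in its appendix and then asserts that the negative powers follow ``by extension as $\odot$-derivations''. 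Your observation that the Jacobiator is a dimensioned triderivation, so that it vanishes once it vanishes on generators, is exactly the mechanism behind that assertion, so your write-up actually fills in a step the paper glosses over; conversely, your claim that the mixed and all-dual generator cases ``reduce to conditions 1--4'' unwinds, after dualising via $\sigma\odot s=\sigma(s)$, into precisely the paper's two appendix computations governed by conditions 3 and 4, so no work is saved there. One caution: your closing ``more conceptual'' route via $\llbracket\mathcal{P},\mathcal{P}\rrbracket=0$ is not an independent shortcut as stated, since transferring $\llbracket\Pi,\Pi\rrbracket=0$ into the dimensioned Gerstenhaber algebra requires first showing that the extension $\Pi\mapsto\mathcal{P}$ intertwines the two Schouten brackets, which is work of the same order as the direct verification; the paper accordingly records that identity only after the theorem is established, and you should likewise treat it as a corollary rather than as the proof.
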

\begin{proof}
We give an explicit construction of the dimensioned Poisson algebra on the power $(\Sec{L}^\odot_\Int,+_\Int,\odot_0)$, that we regard here as a dimensioned commutative algebra over the real numbers with dimension set $\Int$ and dimensionless commutative multiplication $\odot_0$. Since the Jacobi bracket maps pairs of sections into sections $\{\,,\}:\Sec{L}\times \Sec{L}\to \Sec{L}$, we aim to extend it to all the tensor powers of the power as a dimensioned algebra bracket of dimension $-1\in\Int$:
\begin{equation}
    \{\,,\}_{-1}:\Sec{L^n}\times \Sec{L^m}\to \Sec{L^{n+m-1}}.
\end{equation}
It is clear that we obtain a partial Lie bracket for all positive tensor powers simply by extending the Jacobi bracket as $\odot$-derivations in each argument, i.e. setting $\{a,b\}_{-1}:=\{a,b\}$ and generating all the brackets between higher powers from the basic identity:
\begin{equation}
    \{a,b\odot c\}_{-1}:=\{a,b\}\odot c +b\odot \{a,c\}
\end{equation}
for all $a,b,c\in \Sec{L^1}=\Sec{L}$, note that this is analogous to the isomorphism $\Dr{L}\cong\Dr{L^\odot}_0$ in Proposition \ref{DimensionlessPowerDerivations} which makes the Hamiltonian derivation of the Jacobi bracket $D_a=\{a,-\}$ into a dimensionless derivation of the power. The symbol identity of the Jacobi bracket written in terms of the power dimensioned multiplication reads
\begin{equation}
    \{f\odot a ,g\odot b\}_{-1}=f\odot g \odot \{a,b\} + f\odot X_a[g]\odot b - g\odot X_b[f]\odot a + \Lambda(df\otimes a, dg \otimes b)
\end{equation}
for $f,g\in \Sec{L^0}\cong\Cin{M}$, $a,b\in \Sec{L^1}=\Sec{L}$, then we can extract the definition of the dimensioned Poisson bracket for non-negative tensor powers by reading off the above formula interpreted as a Leibniz rule of the $\odot$ multiplication:
\begin{align}
    \{a,f\}_{-1} &:= X_a[f] = -\{f,a\}_{-1}\\
    \{f,g\}_{-1}(a) &:= \Lambda^\sharp( df\otimes a)[g] = -\{g,f\}_{-1}(a)
\end{align}
Note that the second bracket has been defined on a generic argument since
\begin{equation}
    \{\,,\}_{-1}:\Sec{L^0}\times \Sec{L^0}\to \Sec{L^{-1}}=\Sec{L^*}.
\end{equation}
To define the bracket on negative powers we exploit the isomorphism $\Dr{L}\cong\Dr{L^*}$ to define the Hamiltonian derivation $D_a$ on dual sections $\Delta_a\in\Dr{L^*}$ and set
\begin{equation}
    \{a,\alpha\}_{-1}:= \Delta_a(\alpha)=-\{\alpha,a\}_{-1}.
\end{equation}
Note that this definition is consistent with the previous definitions of brackets of non-negative tensor powers as we readily check that it acts as a $\odot$-derivation in both arguments:
\begin{equation}
    \{a,\alpha \odot b\}_{-1}=\{a,\alpha(b)\}_{-1}=X_a[\alpha(b)]=\Delta_a(\alpha)(b)+\alpha(D_a(b))=\{a,\alpha\}_{-1}\odot b+\alpha \odot \{a,b\}_{-1}.
\end{equation}
With the brackets defined so far for non-negative tensor powers and the mixed bracket above, we can expand the expression $\{f\odot a,\alpha \odot b\}$ by $\odot$-derivations (full details of the computation shown in appendix \ref{CalculationsDimensionedPoissonBracket}) to find the only bracket not yet defined:
\begin{equation}
    \{f,\alpha,\}_{-1}(a,b):=\Lambda^\sharp(df\otimes a)[\alpha(b)]+X_b[f]\alpha(a).
\end{equation}
Similarly, expanding the bracket $\{\alpha \odot a,\beta \odot b\}$ (again, full details in appendix \ref{CalculationsDimensionedPoissonBracket}) we find:
\begin{equation}
    \{\alpha,\beta\}_{-1}(a,b,c):=\Lambda^\sharp(d\alpha(a)\otimes b)[\beta(c)]+X_c[\alpha(a)]\beta(b)-\alpha(b)X_a[\beta(c)]+\alpha(b)\beta(\{a,c\}).
\end{equation}
With these partial brackets we can now define the brackets of combinations of positive and negative tensor powers via extension as $\odot$-derivations. The Jacobi identity of the bracket for the negative tensor powers directly depends on the Jacobi identity for the bracket at dimensions $+1$ and $0$. In appendix \ref{CalculationsDimensionedPoissonBracket} the Jacobi identities for brackets of all the combinations of the tensor powers $+1$ and $0$ are shown to follow directly from the basic identities satisfied by the symbols $X$, $\Lambda$ of the Jacobi bracket $\{\,,\}$ listed in Proposition \ref{JacobiSymbol}.
\end{proof}

Recall that the functor $\text{C}^\infty:\Man \to \Ring$ assigns ordinary Poisson manifolds to Poisson algebras, the next theorem shows that the power construction provides an analogous functor of Jacobi manifolds into dimensioned Poisson algebras.

\begin{thm}[The Power Functor for Jacobi Manifolds] \label{PowerFunctorJacobi}
The assignment of the power of a line bundle restricted to the category of Jacobi manifolds with Jacobi maps gives a contravariant functor
\begin{equation}
\normalfont
    \odot: \Jac_\Man\to \textsf{DimPoissAlg}.
\end{equation}
\end{thm}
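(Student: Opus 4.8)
The plan is to dispatch objects and morphisms separately, leaning on the two results already established. On objects there is almost nothing to do: Proposition~\ref{PowerFunctorLineBundles} already identifies the power $(\Sec{L}^\odot_\Int,+_\Int,\odot)$ as a dimensioned ring, and Theorem~\ref{DimPoissonAlgebraJacobi} equips it with the canonical dimensioned Poisson bracket $\{\,,\}_{-1}$ of dimension $-1$ whenever $L$ carries a Jacobi structure. Thus a Jacobi manifold is assigned the dimensioned Poisson algebra $(\Sec{L}^\odot_\Int,+_\Int,\odot_0,\{\,,\}_{-1})$, and the genuine content lies in the morphisms.

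Given a Jacobi map $B:L_1\to L_2$, Proposition~\ref{PowerFunctorLineBundles} already shows that $B^\odot:\Sec{L_2}^\odot\to\Sec{L_1}^\odot$ is a morphism of dimensioned rings, so it preserves the commutative product $\odot_0$ with no further argument. What remains is to show that $B^\odot$ also intertwines the Poisson brackets, i.e.
\begin{equation}
    B^\odot\{r,s\}_{-1}=\{B^\odot r,B^\odot s\}_{-1}\qquad\text{for all }r,s\in\Sec{L_2}^\odot.
\end{equation}
Once this is in place, contravariant functoriality is inherited verbatim from Proposition~\ref{PowerFunctorLineBundles}, since these bracket-preserving ring morphisms compose and contain the identities exactly as their underlying power morphisms do.

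The key reduction exploits that $\{\,,\}_{-1}$ is a dimensioned biderivation in each factor and that $B^\odot$ is a ring morphism. Consequently the defect
\begin{equation}
    F(r,s):=B^\odot\{r,s\}_{-1}-\{B^\odot r,B^\odot s\}_{-1}
\end{equation}
is a $\odot$-derivation in each slot twisted by $B^\odot$, satisfying $F(r\odot r',s)=F(r,s)\odot B^\odot r'+B^\odot r\odot F(r',s)$ and likewise in the second argument. Hence $F$ vanishes identically as soon as it vanishes on a $\odot$-generating set of the power ring. Since products of elements of $\Sec{L^1}$ recover all positive tensor powers, products of elements of $\Sec{L^{-1}}=\Sec{L^*}$ recover all negative ones, and the pairing $\Sec{L^{-1}}\odot\Sec{L^1}\to\Sec{L^0}=\Cin{M}$ is surjective because $L^*\otimes L$ is canonically the trivial line bundle, the subsets $\Sec{L^{\pm 1}}$ generate $\Sec{L_2}^\odot$ and it suffices to verify $F=0$ on pairs drawn from them.

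I would then check the handful of generator cases. For $s,r\in\Sec{L^1}$ the bracket $\{s,r\}_{-1}$ is the Jacobi bracket and $B^\odot$ restricts to $B^*$, so $F(s,r)=B^*\{s,r\}_2-\{B^*s,B^*r\}_1=0$ is precisely the hypothesis that $B$ is a Jacobi map. The remaining pairs involve a factor in $\Sec{L^{-1}}$, where Theorem~\ref{DimPoissonAlgebraJacobi} defines the bracket through the duality isomorphism $\Dr{L}\cong\Dr{L^*}$; here I would use that $B^\odot$ acts on negative powers by the dual pull-back and respects the mixed product $B^\odot(\alpha\odot s)=B^\odot\alpha\odot B^\odot s$, so that the Jacobi-map intertwining of the Hamiltonian derivations $D_s$ passes to their duals. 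The main obstacle is exactly this negative-power step: because a Jacobi map covers a general smooth map $\varphi:M_1\to M_2$ rather than a diffeomorphism, one cannot argue by invertibility and must instead track how $B^*$ on $\Sec{L^*}$ interacts with the symbols $X,\Lambda$ through the pairing identity $\varphi^*(\alpha(s))=(B^*\alpha)(B^*s)$. Since every bracket is ultimately dictated by the single seed datum $\{\,,\}$, the biderivation reduction confines all genuine computation to these few generator pairs.
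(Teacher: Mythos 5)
Your architecture is essentially the paper's: objects are handled by Proposition \ref{PowerFunctorLineBundles} plus Theorem \ref{DimPoissonAlgebraJacobi}, and the bracket-intertwining condition for a Jacobi map $B$ is reduced, via the $\odot$-derivation property, to a finite list of basic cases, with the pair of positive powers being exactly the Jacobi-map hypothesis and the mixed pair $(a,\alpha)$ dispatched through the isomorphism $\Dr{L}\cong\Dr{L^*}$. Your defect biderivation $F$ is a clean formalisation of the paper's ``extend as $\odot$-derivations'' step, and your claim that $\Sec{L^{\pm 1}}$ generates the power ring is correct.

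The gap is in the remaining base case. By excluding $\Sec{L^0}=\Cin{M}$ from your generating set, the pair $(\alpha,\beta)$ of negative powers becomes an unavoidable base case, and your proposed mechanism for it is misattributed: in Theorem \ref{DimPoissonAlgebraJacobi} only the \emph{mixed} bracket $\{a,\alpha\}_{-1}$ is defined through the duality isomorphism $\Dr{L}\cong\Dr{L^*}$; the brackets $\{f,\alpha\}_{-1}$ and $\{\alpha,\beta\}_{-1}$ are instead pinned down by $\odot$-derivation expansion and are given by explicit formulas in the symbols, e.g.
\begin{equation}
    \{\alpha,\beta\}_{-1}(a,b,c)=\Lambda^\sharp(d\alpha(a)\otimes b)[\beta(c)]+X_c[\alpha(a)]\beta(b)-\alpha(b)X_a[\beta(c)]+\alpha(b)\beta(\{a,c\}).
\end{equation}
Verifying $F(\alpha,\beta)=0$ therefore requires knowing that a Jacobi map intertwines the symbols, i.e. $b^*X_a'[f]=X_{B^*a}[b^*f]$ and $B^*\Lambda'(df\otimes c,dg\otimes d)=\Lambda(db^*f\otimes c,db^*g\otimes d)$, which the paper extracts by polarising the Jacobi-map condition on brackets of the form $\{a,f\cdot b\}'$ and $\{f\cdot a,g\cdot b\}'$ --- precisely the dimension $(+1,0)$ and $(0,0)$ checks you claim to have subsumed into the generator reduction. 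Indeed there is a circularity you cannot escape without them: your own biderivation identity gives $F(\alpha,\beta)\odot B^\odot(a\odot b)=F(\alpha(a),\beta(b))$ once the positive and mixed cases vanish, so vanishing on negative pairs is \emph{equivalent} to vanishing on pairs of functions; one of the two must be computed directly from the symbol data, and your proposal never does this. The strategy is sound and completable, but as written its hardest base case rests on a mechanism that only applies to the mixed bracket, so the proof is incomplete.
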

\begin{proof}
In proposition \ref{PowerFunctorJacobi} it was shown that a line bundle factor $B:L\to L'$ covering a smooth map $b:M\to M'$ is assigned to a dimensioned ring morphism $B^\odot :\Sec{L'}^\odot\to \Sec{L}^\odot$ under the power contravariant functor, then it suffices to show that when $B$ is a Jacobi map the power map is a dimensioned Lie algebra morphism, i.e. 
\begin{equation}
    B^\odot\{s,r\}_{-1}'=\{B^\odot s,B^\odot r\}_{-1}
\end{equation}
for all $s,r\in \Sec{L'}^\odot$. Note that $B^\odot$ was defined in proposition \ref{PowerFunctorLineBundles} on the tensor powers of $\Sec{L'}$, $\Sec{L'^*}$ and $\Cin{M'}$. It then follows from the fact that the dimensioned Lie brackets $\{\,,\}_{-1}$, $\{\,,\}_{-1}'$ are defined by extension as $\odot$-derivations that the dimensioned Lie algebra morphism condition for brackets of positive and negative tensor powers suffices to hold for all the bracket combinations of elements in dimensions $+1$, $0$ and $-1$. These conditions are checked directly using the definitions. For the bracket of a pair of elements of dimension $+1$, the condition of dimensioned Lie algebra morphism for $B^\odot$ is precisely the condition that $B$ is a Jacobi map. By considering a bracket of the form $\{a,f\cdot b\}'$ we can see that the fact that $B$ is a Jacobi map and the basic properties of the pull-backs of line bundle factors imply the morphism condition for the bracket of elements of dimension $+1$ and $0$:
\begin{equation}
    B^\odot\{a,f\}_{-1}'=b^*X_a'[f]= X_{B^*a}[b^*f]=\{B^*a,b^*f\}_{-1}=\{B^\odot a,B^\odot f\}_{-1}.
\end{equation}
From similar considerations for a bracket of the form $\{f\cdot a,g\cdot b\}$, it follows that
\begin{align}
    B^\odot\{f,g\}_{-1}'(c,d)&=B^*\Lambda'(df \otimes c,dg\otimes d)\\
    &=\Lambda(db^*f \otimes c,db^*g\otimes d)\\
    &=\{b^*f,b^*g\}_{-1}(c,d)\\
    &=\{B^\odot f,B^\odot g\}_{-1}(c,d)
\end{align}
for all $c,d\in \Sec{L}$. To account for brackets containing elements of dimension $-1$ we first consider the defining formula of the isomorphism $\Dr{L}\cong \Dr{L^*}$ under pull-back
\begin{align}
B^*(\Delta_a'(\alpha)(c))&= b^*X_a'[\alpha(c)]- b^*\alpha(\{a,c\}')\\
&= X_{B^*a}[b^*\alpha(c)]-B^*\alpha(B^*\{a,c\})\\
&= X_{B^*a}[B^*\alpha(B^*c)]-B^*\alpha(\{B^*a,B^*c\})\\
&= \Delta_{B^*a}(B^*\alpha)(B^*c).
\end{align}
Which, in particular, implies the dimensioned Lie morphism condition for the bracket of mixed tensor powers
\begin{equation}
    B^\odot\{a,\alpha\}_{-1}'=B^*\Delta_a'(\alpha)=\Delta_{B^*a}(B^*\alpha)=\{B^*a,B^*\alpha\}_{-1}=\{B^\odot a,B^\odot\alpha\}_{-1}.
\end{equation}
In the proof of theorem \ref{DimPoissonAlgebraJacobi} it was shown that the brackets $\{f,\alpha\}_{-1}$ and $\{\alpha,\beta\}_{-1}$ were determined by extending the previously defined brackets between elements of dimensions $+1$, $0$ and $-1$ as $\odot$-derivations, thus the dimensioned Lie algebra morphism condition for these follows from the fact that $B^\odot$ is defined as the tensor powers of $B^*$.
\end{proof}

The dimensioned multiderivation and de Rham complex technologies for line bundles were introduced in Section \ref{DerJet} as direct analogues of their ordinary smooth manifold counterparts, in light of the two theorems above, we should expect to find dimensioned counterparts of Poisson bivectors and Poisson cohomology. Indeed, we define the \textbf{dimensioned Poisson biderivation} $\pi\in\text{Der}^2(L^\odot)$ of a Jacobi manifold $(\Sec{L},\{\,,\})$ via the usual identity
\begin{equation}
    \pi(\delta s,\delta r):=\delta \{s , r\}_{-1}
\end{equation}
for all $s,r\in\Sec{L}^\odot$. Proposition \ref{JetsPowerJets} implies that the dimensioned Poisson biderivation $\pi$ recovers the ordinary Jacobi biderivation $\Pi$ and symbol $\Lambda$ when restricted to elements of dimension $+1$ and $0$, respectively. Furthermore, it is easy to show -- in fact following an argument entirely analogous to the one for ordinary Poisson manifolds -- that the Jacobi identity of the dimensioned Poisson bracket can be captured as the vanishing condition of the dimensioned Schouten bracket:
\begin{equation}
    \llbracket \pi,\pi \rrbracket = 0.
\end{equation}
It is then straightforward to form the \textbf{dimensioned Lichnerowicz complex} $(\text{Der}^\bullet(L^\odot),\wedge,\llbracket \pi, - \rrbracket)$ whose \textbf{dimensioned Poisson cohomology} $\text{H}^\bullet_\pi(L^\odot)$ captures the ordinary Jacobi algebroid cohomology of the Jacobi manifold $(\Sec{L},\{\,,\})$ and is, of course, a direct generalisation of the ordinary Poisson cohomology.

\subsection{Algebraic Quotients of Jacobi Manifolds} \label{Quotients}

The language of coisotropic calculus, in which morphisms are regarded as coisotropic submanifolds of products, allows for the translation between geometric and algebraic pictures in ordinary Poisson geometry. As introduced at the start of Section \ref{JacToDim}, coisotropic submanifolds can be defined in Jacobi geometry analogously to the ordinary Poisson case by taking the unit-free approach. In this section we show that the dimensioned technology provides, once again, the right algebraic counterparts to coisotropic submanifolds. 

\begin{prop} [Coisotropic Submanifolds induce Dimensioned Coisotropes] \label{CoisotropicSubmanifoldsDimensionedCoisotropes}
Let $\lambda:L\to M$ be a line bundle and $(\Sec{L},\{\,,\})$ a Jacobi structure, then the vanishing dimensioned ideal of a coistropic submanifold $i:S \hookrightarrow M$ is a dimensioned coisotrope of the dimensioned Poisson algebra on the power
\begin{equation}
    I_S\subset (\Sec{L}^\odot_\Int,+_\Int,\odot_0,\{\,,\}_{-1}).
\end{equation}
\end{prop}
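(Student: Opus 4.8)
The plan is to invoke Proposition \ref{DimVanishingIdeal}, which already establishes that $I_S$ is a dimensioned ideal for the commutative multiplication $\odot_0$, so that the only thing left to verify is that $I_S$ is a dimensioned Lie subalgebra, i.e. $\{I_S, I_S\}_{-1} \subset I_S$. Because vanishing on $S$ is a pointwise condition, I would work in a neighbourhood of an arbitrary point of $S$, where Proposition \ref{DimVanishingIdeal} guarantees that every homogeneous slice of $I_S$ is generated over $\Sec{L}^\odot$ by the dimensionless ideal $I_S^{(0)} := I_S \cap \Cin{M}$ of functions vanishing on $S$. This reduces the Lie subalgebra property to a statement about brackets of generators.

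Concretely, I would write $a = \sum_i r_i \odot g_i$ and $b = \sum_j s_j \odot h_j$ with $g_i, h_j \in I_S^{(0)}$ and $r_i, s_j \in \Sec{L}^\odot$, and expand $\{a,b\}_{-1}$ by bilinearity together with the Leibniz rule of the dimensioned Poisson bracket in each argument. Every term produced in this expansion either retains an explicit factor of some $g_i$ or $h_j$ -- and therefore lies in $I_S$ because $I_S$ is a $\odot$-ideal -- or else is a $\odot$-multiple of a bracket of two generators $\{g_i, h_j\}_{-1}$. Thus the entire claim collapses to the single assertion that $\{g,h\}_{-1} \in I_S$ whenever $g, h \in I_S^{(0)}$.

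For this core step I would use the formula from the proof of Theorem \ref{DimPoissonAlgebraJacobi}, namely $\{g,h\}_{-1}(a) = \Lambda^\sharp(dg\otimes a)[h]$ for $\{g,h\}_{-1} \in \Sec{L^{-1}}$ and $a \in \Sec{L}$. Since $g$ vanishes on $S$ we have $dg_x \in (\Tan_x S)^{0}$, hence $dg_x \otimes a(x) \in (\Tan_x S)^{0L}$. The key point is that the coisotropic hypothesis descends to the symbol level as $\Lambda^\sharp_x((\Tan_x S)^{0L}) \subset \Tan_x S$: the inclusion $\Tan^{*L}M \hookrightarrow \Jet^1 L$ carries $(\Tan_x S)^{0L}$ into $(\Der_x L_S)^{0L}$ (a cotangent-type jet annihilating $(\Tan_x S)^0$ kills every derivation whose symbol is tangent to $S$), so applying $\Pi^\sharp_x((\Der_x L_S)^{0L}) \subset \Der_x L_S$ and then the anchor $\rho:\Der L \to \Tan M$ -- under which $\Lambda^\sharp = \rho \circ \Pi^\sharp|_{\Tan^{*L}M}$ -- yields the stated symbol-level condition. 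Consequently $\Lambda^\sharp_x(dg_x \otimes a(x)) \in \Tan_x S$, and pairing with $dh_x \in (\Tan_x S)^0$ gives $\{g,h\}_{-1}(a)(x) = 0$ for every $a$ and every $x \in S$; hence $\{g,h\}_{-1} \in I_S$, as required.

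As a consistency check one recovers the dimension $+1$ case directly: for $s, r \in \Gamma_S = I_S \cap \Sec{L}$ the identity $\{s,r\}_{-1} = \Pi(j^1 s, j^1 r)$ combined with $j^1_x s, j^1_x r \in (\Der_x L_S)^{0L}$ and the coisotropic condition gives $\{s,r\}_{-1}(x) = j^1_x r\big(\Pi^\sharp_x(j^1_x s)\big) = 0$. I expect the main obstacle to be the symbol-level translation in the third paragraph -- verifying carefully that the coisotropic condition as phrased through $\Pi^\sharp$ and $(\Der L_S)^{0L}$ really does restrict to $\Lambda^\sharp((\Tan S)^{0L}) \subset \Tan S$ on the dimensionless generators -- and, secondarily, organizing the Leibniz expansion so that no term fails to land in the $\odot$-ideal.
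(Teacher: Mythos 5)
Your proposal is correct, but it takes a genuinely different route from the paper's own proof. The paper reduces to generators of $I_S$ in dimensions $+1$, $0$ and $-1$ and then disposes of each bracket combination by citing three separate characterizations of coisotropic submanifolds from the prequel \cite[Prop. 3.2.1]{zapata2020unitfree}: closure of $\Gamma_S$ under the Jacobi bracket (characterization 3) for $\{a,b\}_{-1}$, tangency of Hamiltonian vector fields $X_{\Gamma_S}[I_S]\subset I_S$ (characterization 4) for $\{a,f\}_{-1}$, the squiggle condition $\Lambda^\sharp(dI_S\otimes L)\subset \Tan S$ (characterization 2) for $\{f,g\}_{-1}$, and combinations thereof for the brackets involving $\Sec{L^*}$. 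You instead push the reduction one step further, using the local generation of every homogeneous slice of $I_S$ by the dimensionless vanishing ideal (which Proposition \ref{DimVanishingIdeal} asserts, with the same ``perhaps only locally'' caveat, and which your localisation is entitled to since the bracket is a bidifferential operator), so that the entire statement collapses to the single condition $\{g,h\}_{-1}\in I_S$ for vanishing functions; you then derive the needed symbol-level inclusion $\Lambda^\sharp((\Tan S)^{0L})\subset \Tan S$ directly from the paper's definition of coisotropicity via the Spencer sequence, using $(\Tan S)^{0L}\subset (\Der L_S)^{0L}$ and $\Lambda^\sharp=\rho\circ \Pi^\sharp|_{\Tan^{*L}M}$ (correct up to sign conventions, which are immaterial since $\Tan S$ is a subspace). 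What each approach buys: the paper's version is faster given the prequel's equivalences and keeps the geometric meaning of coisotropy visible in each dimension slice, whereas yours is more self-contained -- it needs only one characterization, proved from the definition rather than cited -- and it makes transparent that the dimensionless condition is the algebraically generating one, with the dimension $+1$ condition (your consistency check) and all mixed brackets following for free from the ideal property and the Leibniz rule.
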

\begin{proof}
Proposition \ref{DimVanishingIdeal} shows that $I_S\subset \Sec{L}^\odot$ is a dimensioned $\odot$-ideal for any submanifold $S$, then it suffices to show that $I_S$ is a dimensioned Lie subalgebra. The vanishing ideal is generated by the $\odot$-products of elements of dimension $+1$, $0$ and $-1$, then, by the Leibniz identity of the dimensioned Poisson bracket, it suffices to check the dimensioned Lie subalgebra conditions for elements of those dimensions. For this we will use characterizations 2, 3 and 4 of coisotropic submanifolds of a Jacobi manifold given in \cite[Prop. 3.2.1]{zapata2020unitfree}. Clearly the condition on the brackets of positive powers $\{a,b\}_{-1}$ for $a,b\in\Sec{L}$ is the fact that the vanishing submodule of sections of $S$ forms a Lie subalgebra of the Jacobi structure, characterization 3. For the bracket $\{a,f\}_{-1}=X_a[f]$ it is the fact that Hamiltonian vector fields of vanishing sections are tangent to the submanifold, i.e. $X_{\Gamma_S}[I_S]\subset I_S$, characterization 4. For the bracket $\{f,g\}_{-1}$ it is the condition $\Lambda^\sharp(dI_S\otimes L)\subset \Tan S$, characterization 2. From the observation that
\begin{equation}
    a\in I_S\cap \Sec{L^1} \Rightarrow \alpha \odot a=\alpha(a)\in I_S
\end{equation}
for any $\alpha\in\Sec{L^*}$, we check the dimensioned Lie subalgebra condition for the brackets $\{f,\alpha\}_{-1}$ and $\{\alpha,\beta\}_{-1}$ by writing the explicit defining formulas presented in the proof of theorem \ref{DimPoissonAlgebraJacobi} and using characterizations 2 and 4 combined. 
\end{proof}

In Section \ref{dimPoisson} dimensioned Poisson algebras were shown to reduce via coisotropes, again, just like in the case of ordinary Poisson algebras. The next theorem establishes the dimensioned-algebraic analogue of coisotropic reduction in the form of dimensioned Poisson reduction.

\begin{thm}[Coisotropic Reduction induces Dimensioned Poisson Reduction] \label{CoisotropicReductionDimPoissonReduction}
Let a Jacobi manifold $(\Sec{L},\{\,,\})$ and let $i:S \hookrightarrow M$ be a coisotropic submanifold satisfying the assumptions of proposition \ref{CoisotropicReductionJacobi} so that there is a a reduced Jacobi structure $(\Sec{L'},\{\,,\}')$ fitting in the reduction diagram:
\begin{equation}
    \begin{tikzcd}[sep=tiny]
    L_S \arrow[rr,"\iota"] \arrow[dd,"\pi"'] \arrow[dr]& & L \arrow[dr] & \\
    & S \arrow[rr,"i"', hook] \arrow[dd, "p",twoheadrightarrow] & & M \\
    L' \arrow[dr] & & & \\
    & M' &  & 
    \end{tikzcd}
\end{equation}
then there is an isomorphism of dimensioned Poisson algebras between the power of the reduced Jacobi structure and the algebraic dimensioned Poisson reduction by the vanishing dimensioned coisotrope:
\begin{equation}
    \Sec{L'}^\odot\cong N(I_S)/I_S.
\end{equation}
\end{thm}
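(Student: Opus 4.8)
The goal is to establish an isomorphism of dimensioned Poisson algebras
\[
    \Sec{L'}^\odot \cong N(I_S)/I_S.
\]
The plan is to construct this as the composite of two identifications, each of which is already available from the earlier results. First, the reduction data gives a submersion factor $\pi:L_S\to L'$ covering the surjective submersion $p:S\twoheadrightarrow M'$, together with the embedding factor $\iota:L_S\to L$. Applying the contravariant power functor of Proposition \ref{PowerFunctorLineBundles} to $\pi$ produces a dimensioned ring morphism $\pi^\odot:\Sec{L'}^\odot\to\Sec{L_S}^\odot$, and by Proposition \ref{DimVanishingIdeal} we already know $\Sec{L_S}^\odot\cong\Sec{L}^\odot/I_S$. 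Since $\pi$ is a submersion factor, $\pi^\odot$ should be injective, so the first task is to identify its image inside $\Sec{L}^\odot/I_S$ as precisely the image of the idealizer $N(I_S)$ under the quotient projection, i.e.\ to show $N(I_S)/I_S$ is the correct target.

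\textbf{Main structural steps.}

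First I would unpack the definitions: an element of $\Sec{L'}^\odot$ in dimension $n$ is a section of $L'^{\,n}$, which via $\pi^*$ pulls back to a section of $L_S^{\,n}$ that is ``basic'' with respect to the fibres of $p$. I would then verify that the basic sections are exactly the image of $N(I_S)$ under $\iota^\odot$ modulo $I_S$. The inclusion showing basic sections land in the idealizer follows from the reduction hypothesis: the compatibility condition $\delta(\Ker{\Der\pi})=\Lambda^\sharp((\Tan C)^{0L})$ of Proposition \ref{CoisotropicReductionJacobi} is precisely what guarantees that extensions of basic sections bracket into $I_S$, so their classes lie in $N(I_S)/I_S$. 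Conversely, an element of $N(I_S)$ reduced mod $I_S$ defines a well-defined section downstairs because the bracket condition forces it to be constant along the fibres of $p$ at the level of the relevant tensor powers. The second task is then to check that the bracket defined on $N(I_S)/I_S$ by Proposition \ref{DimensionedPoissonReduction},
\[
    \{n+I_S,m+I_S\}' = \{n,m\}_{-1}+I_S,
\]
is intertwined with the reduced Jacobi bracket on $\Sec{L'}^\odot$. This reduces, by the extension-as-$\odot$-derivations argument of Theorem \ref{DimPoissonAlgebraJacobi}, to checking it on dimensions $+1$, $0$ and $-1$, where it is exactly the reduction identity $\pi^*\{s_1,s_2\}'=\iota^*\{S_1,S_2\}$ defining the reduced Jacobi structure, together with its symbol-level analogues.

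\textbf{Where the difficulty lies.}

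The routine parts are the ring-isomorphism bookkeeping and the reduction of bracket-compatibility to low dimensions via the Leibniz rule. The genuine obstacle is establishing the bijection at the level of the underlying dimensioned modules, specifically the surjectivity direction: showing that \emph{every} class in $N(I_S)/I_S$ arises from a basic section on $L'$, equivalently that $N(I_S)$ is generated (as a dimensioned ring, modulo $I_S$) by $p$-basic sections of the tensor powers. This is the dimensioned analogue of the classical fact that the Poisson normalizer of a coisotrope consists exactly of functions constant on the symplectic leaves of the foliation, and it rests on the precise compatibility hypothesis $\delta(\Ker{\Der\pi})=\Lambda^\sharp((\Tan C)^{0L})$. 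I expect that verifying this identification in every integer dimension $n$, rather than only in dimensions $0$ and $+1$, is the crux: one must confirm that the idealizer condition in the higher and negative tensor powers does not impose constraints beyond $p$-basicness, which should follow from the fact that both $I_S$ and $N(I_S)$ are generated in low dimensions by the local-unit argument of Proposition \ref{DimVanishingIdeal}, combined with functoriality of $\odot$ applied to $\pi$.
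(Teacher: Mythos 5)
Your plan follows essentially the same route as the paper's own proof: apply the power functor to the reduction diagram, identify $\Sec{L_S}^\odot\cong\Sec{L}^\odot/I_S$ via the vanishing dimensioned ideal, use the compatibility condition $\delta(\Ker{\Der \pi})=\Lambda^\sharp((\Tan S)^{0L})$ to match $N(I_S)$ with the $p$-basic sections, and verify the bracket intertwining only in dimensions $+1$, $0$, $-1$ by the extension-as-$\odot$-derivations argument. The crux you flag --- that the idealizer condition in all tensor powers reduces to $p$-basicness because $N(I_S)$ is determined by its conditions in dimensions $+1$ and $0$ --- is precisely how the paper closes the argument, invoking the low-dimensional generation observed in Proposition \ref{CoisotropicSubmanifoldsDimensionedCoisotropes} together with the global reduction hypothesis.
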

\begin{proof}
The Jacobi reducibility condition was given explicitly in terms of the brackets as
\begin{equation}
    \pi^*\{a_1,a_2\}'=\iota^*\{A_1,A_2\}
\end{equation}
for all $a_i\in \Sec{L'}$ and $A_i\in\Sec{L}$ extensions satisfying $\pi^*a_i=\iota^*A_i$. The definition of the power of a line bundle factor of proposition \ref{PowerFunctorLineBundles} and the explicit definition of the dimensioned bracket $\{\,,\}_{-1}$ clearly show that the reducibility condition translates into the power setting mutatis mutandis as one finds that the dimensioned Poisson brackets on the powers of $L$ and $L'$ are related by the following condition
\begin{equation}
    \pi^\odot\{a_1,a_2\}'_{-1}=\iota^\odot\{A_1,A_2\}_{-1}
\end{equation}
for all $a_i\in \Sec{L'}^\odot$ and $A_i\in\Sec{L}^\odot$ extensions satisfying $\pi^\odot a_i=\iota^\odot A_i$. We aim to relate the dimensioned Lie idealizer of the vanishing dimensioned ideal $N(I_S)$ to the submersion factor $\pi:L_S\to L'$ in a natural way. This will follow by the compatibility condition assumed in proposition \ref{CoisotropicReductionJacobi} for the coistropic submanifold:
\begin{equation}
    \delta(\Ker{\Der \pi})=\Lambda^\sharp((\Tan S)^{\text{0}L})
\end{equation}
which, exploiting the jet sequence of the Jacobi structure, can be rewritten as
\begin{equation}
    \Ker{\Tan p}=(\tilde{\Lambda}^\sharp\circ i)(\Tan^0S \otimes L_S).
\end{equation}
This equation gives the point-wise condition that the $p$-fibration on $S$ is a foliation integrating the tangent distribution of Hamiltonian vector fields of the vanishing sections. It follows from the observation made at the end of the proof of proposition \ref{CoisotropicSubmanifoldsDimensionedCoisotropes} that the Lie idealizer $N(I_S)$ is generated by the brackets of elements of dimension $+1$ and $0$, then it suffices to identify the elements satisfying the dimensioned Lie idealizer defining condition of these dimensions. These will be $f\in\Sec{L^0}\cong \Cin{M}$ and $s\in\Sec{L^1}=\Sec{L}$ such that
\begin{equation}
    \{f,g\}_{-1}\in I_S, \qquad \{s,g\}_{-1}\in I_S, \qquad \{s,a\}_{-1}\in I_S
\end{equation}
for all $g\in I_S\cap \Sec{L^0}$ and $a\in I_S\cap \Sec{L^1}$. From the explicit formulas given for the dimensioned bracket $\{\,,\}_{-1}$ given in the proof of theorem \ref{DimPoissonAlgebraJacobi} we clearly see that the compatibility condition of the coisotropic submanifold with the submersion factor gives a point-wise identification of elements in the idealizer and the infinitesimal description of the $p$-fibration with the restricted line bundles on $S$. Since, by assumption, the submersion factor fits in a reduction scheme of (smooth) Jacobi manifolds this infinitesimal identification carries over globally to allow the identification of $N(I_S)$ with the line bundle $\pi$-fibration. Quotienting by $I_S$, as seen in proposition \ref{DimVanishingIdeal}, amounts to restricting to the submanifold $S$, thus we find the equivalent description of the reduced bracket $\{\,,\}_{-1}'$ as the canonical dimensioned Poisson bracket on the quotient $N(I_S)/I_S$.
\end{proof}

\subsection{Algebraic Products of Jacobi Manifolds} \label{Products}

The unit-free approach allows for a geometric definition of Jacobi products based on the product of line bundles in Proposition \ref{ProductLineBundle}. The proposition below shows that this construction is entirely analogous to the definition of Poisson products on ordinary Cartesian products of smooth manifolds.

\begin{prop}[Product of Jacobi Manifolds I]\label{ProductJacobi1}
Let Jacobi structures $(\Sec{L_1},\{\,,\}_1)$ and $(\Sec{L_2},\{\,,\}_2)$, then there exists a unique Jacobi structure in the line product $(\Sec{L_1\utimes L_2},\{\,,\}_{12})$ such that the canonical projection factors
\begin{equation}
\begin{tikzcd}
L_1 & L_1\utimes L_2 \arrow[l,"P_1"']\arrow[r,"P_2"] & L_2
\end{tikzcd}
\end{equation}
are Jacobi maps defining the product bracket $\{\,,\}_{12}$ via:
\begin{align}
    \{P_1^*-,P_1^*-\}_{12}&:=P_1^*\{-,-\}_1\\ \{P_2^*-,P_2^*-\}_{12}&:=P_2^*\{-,-\}_2\\ \{P_1^*-,P_2^*-\}_{12}&:=0
\end{align}
together with analogous conditions for the symbols $X_{12}$ and $\Lambda_{12}$. We call $(\Sec{L_1\utimes L_2},\{\,,\}_{12})$ the \textbf{product Jacobi structure}.
\end{prop}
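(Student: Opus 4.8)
The plan is to dispatch uniqueness first and then construct the bracket through its biderivation. For uniqueness, recall from the line product construction that $\Sec{L_1\utimes L_2}=\Cin{M_1\dtimes M_2}\cdot P_1^*\Sec{L_1}$, so every section is a $\Cin{M_1\dtimes M_2}$-combination of pull-backs $P_1^*s_1$. Since a Jacobi bracket is a biderivation in each argument, the symbol identity shows it is completely determined once its values on a generating set of sections and its symbols $X_{12},\Lambda_{12}$ are fixed. The three prescribed bracket identities fix the bracket on generators $P_1^*s_1$ and $P_2^*s_2$, while the analogous conditions fix $X_{12}$ and $\Lambda_{12}$; hence at most one Jacobi structure making $P_1,P_2$ into Jacobi maps and obeying the stated conditions can exist.

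For existence I would work with the biderivation picture $\{s,r\}=\Pi(j^1s,j^1r)$ and its musical map $\Pi^\sharp:\Jet^1L\to\Der L$. The structural input is the der product isomorphism $\Der(L_1\utimes L_2)\cong\Der L_1\boxplus\Der L_2$ together with its ldual counterpart for jets, which present a derivation (resp. jet) of the product as a fibered pair of derivations (resp. jets) of the two factors at the corresponding base points. Under this splitting I would define $\Pi_{12}^\sharp$ block-diagonally: as the pull-back of $\Pi_1^\sharp$ on the $\Jet^1L_1$ summand, as the pull-back of $\Pi_2^\sharp$ on the $\Jet^1L_2$ summand, and zero across summands. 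Because the jet of a first-factor pull-back $P_1^*s_1$ corresponds to $(P_1^*j^1s_1,0)$ under the splitting, evaluating $\Pi_{12}$ reproduces $\{P_1^*s_1,P_1^*r_1\}_{12}=P_1^*\{s_1,r_1\}_1$, symmetrically for the second factor, and the vanishing off-diagonal block forces $\{P_1^*s_1,P_2^*s_2\}_{12}=0$. The symbols $X_{12},\Lambda_{12}$ are then obtained as the anchor- and leading-symbol parts of $\Pi_{12}$ and satisfy the analogous block conditions automatically.

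The main obstacle is showing that $\Pi_{12}$ is an honest Jacobi biderivation, that is, the Jacobi identity, which I would encode as the vanishing of the Schouten square $\llbracket\Pi_{12},\Pi_{12}\rrbracket=0$. Expanding along the splitting decomposes this into a pure first-factor term, a pure second-factor term, and mixed terms; the two pure terms vanish since $\Pi_1$ and $\Pi_2$ are Jacobi, so the content lies entirely in the mixed terms. This is exactly where the fact that the base product $M_1\dtimes M_2$ is not a Cartesian product but a principal $\Real^\times$-bundle over $M_1\times M_2$ must be handled with care: one must check that $\Der L_1$ and $\Der L_2$ embed into $\Der(L_1\utimes L_2)$ as bracket-commuting Lie subalgebroids, so that Hamiltonian derivations of first-factor sections commute with those of second-factor sections. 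Granting this, every mixed Schouten term carries either a cross bracket $\{P_1^*-,P_2^*-\}_{12}=0$ or a commutator of the two commuting families, and therefore vanishes.

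Equivalently, and perhaps more concretely, I would verify existence through the symbol triple of Proposition \ref{JacobiSymbol}: take spanning sections $\Sigma_{12}:=P_1^*\Sigma_1\cup P_2^*\Sigma_2$, define $[\,,]_{12}$, $X_{12}$ and $\Lambda_{12}$ as the block-diagonal assemblies of the factor data, and check compatibility conditions 1--4. Each condition splits into a first-factor copy, a second-factor copy, and cross terms; the copies hold because each factor is Jacobi, while the cross terms reduce to the vanishing cross bracket, to $[X_{1,s_1},X_{2,s_2}]=0$, and to the mutual annihilation of the two $\Lambda^\sharp$-images, all of which again express the independence of the two factors inside the product der bundle.
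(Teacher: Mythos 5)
Your proposal has a genuine gap, and it is exactly the subtlety the paper flags in the discussion following this proposition: the base product $M_1\dtimes M_2$ is a principal $\Real^\times$-bundle over $M_1\times M_2$, not a Cartesian product, so neither $\Cin{M_1\dtimes M_2}$ nor its cotangent bundle is spanned by pull-backs from the two factors. This breaks two of your three arguments. For uniqueness, the ``analogous conditions'' for $X_{12}$ and $\Lambda_{12}$ only constrain them on differentials of pull-back functions, which do not span; the symbols' components along the extra $\Real^\times$-direction are left undetermined by those conditions, so your claim that they are ``fixed'' is unjustified as stated. (Uniqueness is still true, but it must be extracted from the bracket conditions themselves, using relations like $P_2^*u_2=(u_2/u_1)\cdot P_1^*u_1$ and the Leibniz rule to pin down the action of the symbols on the ratio functions $u_1/u_2$, whose differentials complete the pull-backs to a spanning set --- this is precisely the spanning-functions device of the cited proof in the prequel.) For your ``equivalent, more concrete'' symbol-triple route, the situation is worse: the product data are genuinely not block-diagonal assemblies. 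Working in a local trivialisation by $u_1,u_2$ with fibre coordinate $t$, writing the factor data as $(\Lambda_i,E_i)$, the naive block-diagonal choice $\Lambda_{12}=\Lambda_1+\Lambda_2$, $X_{12}$ horizontal, yields
\begin{equation}
    \{P_1^*(f\cdot u_1),P_2^*(g\cdot u_2)\}_{12}=t^{-1}\bigl(f\,E_2[g]-g\,E_1[f]\bigr)\cdot P_1^*u_1,
\end{equation}
which is nonzero whenever either factor fails to be Poisson, violating the third defining condition. The true $\Lambda_{12}$ necessarily carries components along $\partial_t$ with ratio-function coefficients (schematically $\Lambda_1+t\Lambda_2+t(tE_2-E_1)\wedge\partial_t$), so condition 1--4 checking ``splits into factor copies plus vanishing cross terms'' is false for this definition.

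Your first route, via the block-diagonal $\Pi_{12}^\sharp$ on the splitting $\Der(L_1\utimes L_2)\cong\Der L_1\boxplus\Der L_2$, does not suffer from this defect, because that splitting already encodes the vertical corrections: the embedding $D_1\mapsto(D_1,0)$ is characterised by annihilating second-factor pull-backs, which forces its symbol to acquire an $\Real^\times$-vertical component (in the trivialisation above, $X_1+t g_1\partial_t$ rather than $X_1$). But this is also why the lemma you explicitly defer --- that the two lifted copies of $\Dr{L_1}$ and $\Dr{L_2}$ commute inside $\Dr{L_1\utimes L_2}$ and that the lifts intertwine Schouten brackets --- is the actual mathematical content of the proposition rather than a formality; it holds (the symbols $X_1+tg_1\partial_t$ and $X_2-tg_2\partial_t$ commute precisely because of those vertical terms), but ``granting this'' leaves existence unproven. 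As it stands, then, the proposal consists of one incorrect branch, one incomplete uniqueness argument, and one viable skeleton whose crux is assumed. For comparison, the paper's own proof is by citation to the prequel, where the bracket is constructed on general arguments by locality together with the spanning ratio functions; if you supply a proof of your commutation lemma, your der/jet argument would be a legitimately different and more structural derivation.
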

\begin{proof}
\cite[Prop. 3.2.2]{zapata2020unitfree}.
\end{proof}
This approach to Jacobi products has some clear shortcomings that we set out to remedy. First note that the construction of the product Jacobi bracket $\{\,,\}_{12}$ above is considerably more subtle than that of the ordinary Poisson product, despite both operating analogously at a structural level within their respective categories. The subtlety stems from the fact that the base product $M_1 \dtimes M_2$ carries information from the line bundles $L_1$ and $L_2$ and, consequently, its cotangent space cannot be fully spanned by pairs of functions on each factor $M_1$ and $M_2$. In \cite{zapata2020unitfree} this is resolved by introducing a class of spanning functions defined from pairs of locally non-vanishing sections (see \cite[Prop. 2.2.5]{zapata2020unitfree}): given non-vanishing sections $u_1\in \Sec{L_1}$ and $u_2\in\Sec{L_2}$ is it possible to define functions on the base product
\begin{equation}
    \frac{u_1}{u_2} \in \Cin{M_1\dtimes M_2}
\end{equation}
such that their differentials span the cotangent bundle and can thus be exploited together with the local nature of the brackets to define the product Jacobi bracket on general arguments. In fact, this manner of quotient operation for sections somewhat foreshadowed our definition of the power dimensioned ring since it is easy to check that
\begin{equation}
    \frac{u_1}{u_2} = P_1^\odot u_1 \odot P_2^\odot u_2^{-1}
\end{equation}
where $u_2^{-1}\in\Sec{L_2^*}$ stands for the unique dual locally non-vanishing section such that $u_2^{-1}(u_2)=1\in\Cin{M_2}$.\newline

These added technical complexities at the geometric level turn into proper obstacles when considering the algebraic counterpart to the product of Jacobi manifolds. Recall that the product of ordinary Poisson manifolds $(M_1,\{\,,\}_1)$ and $(M_2,\{\,,\}_2)$ can be recovered algebraically from the tensor product of Poisson algebras since the usual inclusion of products of functions turns into an inclusion of Poisson algebras
\begin{equation}
    \Cin{M_1}\otimes_\Real \Cin{M_2} \hookrightarrow \Cin{M_1 \times M_2}
\end{equation}
where the product bracket satisfies
\begin{equation}
    \{f_1f_2,g_1g_2\}=\{f_1,g_1\}f_2g_2 + f_1g_1\{f_2,g_2\}
\end{equation}
for all $f_1,g_1\in \Proj_1^*\Cin{M_1}$ and $f_2,g_2\in\Proj_2^*\Cin{M_2}$. With the power functor technology at hand, we are now ready to give the dimensioned-algebraic analogue of this construction.

\begin{thm}[Product of Jacobi Manifolds II]\label{ProductJacobi2}
Let the dimensioned Poisson algebras of two Jacobi manifolds $(\Sec{L_1}^\odot,\{\,,\}_1)$ and $(\Sec{L_2}^\odot,\{\,,\}_2)$, then there exists a unique dimensioned Poisson structure $(\Sec{L_1, L_2}^\odot,\{\,,\}_{12})$ such that the powers of the canonical projections
\begin{equation}
\begin{tikzcd}
\Sec{L_1}^\odot \arrow[r,"P_1^\odot"] & \Sec{L_1, L_2}^\odot & \Sec{L_2}^\odot \arrow[l,"P_2^\odot"']
\end{tikzcd}
\end{equation}
are dimensioned Poisson algebra homomorphisms and the product bracket $\{\,,\}_{12}$ satisfies:
\begin{align}
    \{P_1^\odot-,P_1^\odot-\}_{12} &:=P_1^\odot\{-,-\}_1\\ \{P_2^\odot-,P_2^\odot-\}_{12} &:=P_2^\odot\{-,-\}_2\\ \{P_1^\odot-,P_2^\odot-\}_{12} &:=0.
\end{align}
Furthermore, there is an inclusion of dimensioned Poisson algebras
\begin{equation}
    \Sec{L_1}^\odot \otimes_\Real \Sec{L_2}^\odot \hookrightarrow \Sec{L_1,L_2}^\odot
\end{equation}
where the tensor product is defined as in Proposition \ref{DimensionedPoissonProductHomo}.
\end{thm}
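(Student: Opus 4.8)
The plan is to reduce everything to two results already in hand: the ring-level inclusion of Proposition \ref{PowerLineBundleProduct2} and the dimensioned Poisson product of Proposition \ref{DimensionedPoissonProductHomo}. I would first manufacture the bracket on the tensor factor $\Sec{L_1}^\odot \otimes_\Real \Sec{L_2}^\odot$, and only afterwards push it out to the full power ring $\Sec{L_1, L_2}^\odot$ by a biderivation argument. The three displayed conditions and the uniqueness clause should then fall out of the construction, and the inclusion statement will be automatic by design.

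First I would equip $\Sec{L_1}^\odot \otimes_\Real \Sec{L_2}^\odot$ with a dimensioned Poisson bracket using Proposition \ref{DimensionedPoissonProductHomo}. Each factor is the dimensioned Poisson algebra of Theorem \ref{DimPoissonAlgebraJacobi}, so each carries commutative multiplication $\odot$ of dimension $0$ and Lie bracket of dimension $-1$; here the dimension monoid is $(\Int,+)$, and the compatibility hypothesis $bq=pc$ reduces to $-1=-1$ and holds. The resulting bracket is $\{a \otimes b, a' \otimes b'\}_{12} = \{a, a'\}_1 \otimes b \odot b' + a \odot a' \otimes \{b, b'\}_2$, a sum of two homogeneous pieces of dimensions $(-1,0)$ and $(0,-1)$ in $\Int^2$. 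Evaluating this formula on the generators $a \otimes 1$ and $1 \otimes b$ yields the three defining rules at once: the $\Sec{L_1}^\odot$-sector reproduces $\{\,,\}_1$, the $\Sec{L_2}^\odot$-sector reproduces $\{\,,\}_2$, and the cross-bracket vanishes because the two summands then carry the factors $\{a,1\}_1$ and $\{1,b'\}_2$, both zero. Transporting along the injection $r_1 \otimes r_2 \mapsto P_1^\odot r_1 \odot P_2^\odot r_2$ of Proposition \ref{PowerLineBundleProduct2} identifies this with a bracket on the sub-dimensioned-ring $\Sec{L_1}^\odot \otimes_\Real \Sec{L_2}^\odot \subset \Sec{L_1, L_2}^\odot$, and the three rules become exactly the stated conditions on $P_1^\odot$ and $P_2^\odot$.

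Next I would extend this bracket from the subring to all of $\Sec{L_1, L_2}^\odot$ by declaring it a $\odot$-biderivation, just as the single-bundle bracket was generated in Theorem \ref{DimPoissonAlgebraJacobi}. This is legitimate because, locally over the base product $M_1 \dtimes M_2$, the subring already contains a spanning set: the pulled-back functions $p_1^* \Cin{M_1}$ and $p_2^* \Cin{M_2}$, the pulled-back sections $P_i^\odot \Sec{L_i}$ and $P_i^\odot \Sec{L_i^*}$, and the ratio functions $P_1^\odot u_1 \odot P_2^\odot u_2^{-1}$ built from local units, whose differentials span (the spanning-function observation recalled just before the theorem, cf. \cite{zapata2020unitfree}). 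A biderivation is therefore determined by, and reconstructible from, its restriction to the subring. I would then check antisymmetry, Jacobi and Leibniz on $\Sec{L_1, L_2}^\odot$: by the biderivation property each identity reduces to its value on the spanning generators, where it already holds on the subring by Proposition \ref{DimensionedPoissonProductHomo}, while well-definedness across overlapping trivializations is automatic since the transition data is intertwined by the ring homomorphisms $P_1^\odot, P_2^\odot$ (Proposition \ref{PowerFunctorLineBundles}). Uniqueness is then immediate: any dimensioned Poisson bracket obeying the three rules agrees with $\{\,,\}_{12}$ on the spanning generators and, being a biderivation, everywhere.

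I expect the biderivation extension to be the crux. Generating a bracket by Leibniz is consistent only if it respects every relation the generators satisfy over $\Cin{M_1 \dtimes M_2}$, and the base product genuinely entangles $L_1$ and $L_2$ through the ratio functions, so the cotangent directions are not merely pullbacks from $M_1$ and $M_2$. The feature that rescues the construction — and which must be invoked explicitly — is the vanishing of the cross-sector bracket $\{P_1^\odot -, P_2^\odot -\}_{12} = 0$: it decouples the two Hamiltonian actions, so the $L_1$-derivation ignores the $L_2$-arguments and conversely, which is exactly what makes the Leibniz extension well-defined and mirrors the classical decoupling in the ordinary Poisson product $\{f_1 f_2, g_1 g_2\} = \{f_1, g_1\} f_2 g_2 + f_1 g_1 \{f_2, g_2\}$.
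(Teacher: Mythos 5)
Your overall architecture is recognisably close to the paper's: the three defining rules, a Leibniz/biderivation extension to all of $\Sec{L_1,L_2}^\odot$, uniqueness from agreement on generators, and an appeal to Proposition \ref{DimensionedPoissonProductHomo} for the inclusion. You merely reverse the order (tensor-product bracket first, extension second, whereas the paper extends first and exhibits the inclusion last), and that reversal by itself would be harmless. The problem is in your first step, and it is not cosmetic.

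You describe the bracket on $\Sec{L_1}^\odot\otimes_\Real\Sec{L_2}^\odot$ as ``a sum of two homogeneous pieces of dimensions $(-1,0)$ and $(0,-1)$ in $\Int^2$''. In the dimensioned formalism such an expression does not exist: addition is only partially defined, between elements of one and the same dimension slice, so a sum of terms of dimensions $(n_1+m_1-1,\,n_2+m_2)$ and $(n_1+m_1,\,n_2+m_2-1)$ is not an element of anything. This is precisely the obstruction the paper isolates in Section \ref{polyJacobiMan}, formula (\ref{generaldimPoissproduct}), as the reason products of general poly-Jacobi manifolds fail. The entire content of the hypothesis $bq=pc$ in Proposition \ref{DimensionedPoissonProductHomo} --- which you wave through as ``reduces to $-1=-1$'' --- is that the \emph{homogeneous} tensor product is taken over a dimensioned ring sharing the dimension monoid, so that dimensions collapse via $\Int\times^{\Int}\Int\cong\Int$ and both Leibniz terms land in a single slice; your two sentences are mutually inconsistent, since you first invoke that proposition (monoid $\Int$, bracket of homogeneous dimension $-1$) and then grade the result by the monoid $\Int^2$ of the $\otimes_\Real$ product, under which the formula is ill-defined. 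For the same reason your extension step never establishes what the theorem actually asserts, namely a \emph{homogeneous} dimensioned Poisson bracket on $\Sec{L_1,L_2}^\odot$: the first defining rule forces the bracket dimension $(-1,0)$ while the second naively forces $(0,-1)$, and reconciling the two --- the paper assigns dimension $(-1,0)$, which implicitly uses the line-product identification of $p_1^*L_1$ with $p_2^*L_2$ built into $L_1\utimes L_2$ --- is the genuine crux of the theorem. The point you single out as the crux (that the Leibniz extension must respect relations among spanning sections, rescued by the vanishing cross-bracket) is the part that really is routine, being identical to the ordinary Poisson product; the dimension bookkeeping you skipped is where this theorem lives or dies.
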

\begin{proof}
We have set things up in such a way that this proof is now entirely analogous to the one for ordinary Poisson products (see for instance \cite{fernandes2014lectures}). The defining formulas for the bracket $\{\,,\}_{12}$ restricted to each factor $P_i^\odot\Sec{L_i}^\odot$ can be used in combination with the Leibniz identity to extend the bracket to arbitrary elements of the power dimensioned ring $\Sec{L_1,L_2}^\odot$. The bracket $\{\,,\}_{12}$ then inherits the antisymmetry and Jacobi identity directly from each of the factor brackets $\{\,,\}_i$ and gets the product dimension $(-1,0)\in\Int^2$ as a homogeneous dimensioned algebra multiplication. It only remains to show the inclusion. First note that Proposition \ref{DimensionedPoissonProductHomo} requires the dimensioned Poisson algebras to be over the same dimension monoid and for their multiplication dimensions to satisfy a multiplicativity condition. This is indeed the case since the dimension set is $\Int$ for both and their dimensions are $0$ and $-1$ which trivially satisfy $0+(-1)=(-1)+0$, therefore the left-hand-side of the inclusion is a well-defined dimensioned Poisson algebra. The inclusion map is the same as in Proposition \ref{PowerLineBundleProduct2} and a simple computation gives
\begin{equation}
    \{s_1\odot s_2,r_1\odot r_2\}_{12}=\{s_1,r_1\}_{12} \odot s_2 \odot r_2 + s_1 \odot r_1 \odot \{s_2,r_2\}_{12}
\end{equation}
for all $s_i,r_i\in P_i^\odot\Sec{L_i}^\odot$. Using the defining equations of the bracket $\{\,,\}_{12}$ we recover the expression of the tensor product in Proposition \ref{DimensionedPoissonProductHomo}, hence showing that the inclusion is a dimensioned Poisson algebra homomorphism.
\end{proof}

In our search for algebraic closure we have been pushed into considering dimensioned Poisson algebras that do not strictly correspond to Jacobi manifolds in a conventional sense, i.e. $(\Sec{L_1, L_2}^\odot,\{\,,\}_{12})$ doesn't correspond uniquely to a single Jacobi manifold despite there being projections to the $P_i^\odot\Sec{L_i}^\odot$ subalgebras that do. The structure resulting from the product of two Jacobi manifolds is the motivating example of what will be called a \textbf{poly-Jacobi manifold} in Section \ref{polyJacobiMan} below.

\section{Poly-Jacobi Geometry} \label{polyJacobiGeo}

The dimensioned technology has proven very effective at capturing the algebraic counterparts of line bundles and Jacobi manifolds. However, in the process of seeking for algebraic closure and functoriality, we have been pushed to consider objects slightly more general than ordinary line bundles and Jacobi manifolds. This was first noted when the power dimensioned ring construction was naturally extended to collections of lines or collections of line bundles over the same base. Collections of line bundles were also necessary to develop the correct theory of algebraic products of line bundles, as seen in Proposition \ref{PowerLineBundleProduct2}, and Theorem \ref{ProductJacobi2} uses a dimensioned Poisson structure defined from a collection of line bundles that does not correspond to a single Jacobi manifold in any conventional sense. It seems that collections of line bundles are being forced onto us if we want to give a parsimonious algebraic formulation Jacobi geometry.\newline 

This is perhaps hinting at the physical significance of line bundles as phase spaces whose observables have not been given a choice of unit. Indeed, if one is to recover observables of a physical system  behaving according to the rules of dimensional analysis, the power dimensioned ring of a collection of lines, each representing a fundamental measurable quantity, must be used. This was discussed in detail in \cite{zapata2021dimensioned} and, although the question of introducing physical quantities more explicitly into differential geometry is not strictly in the scope of the present work, these considerations further legitimise the investigation of collections of line bundles as the natural generalisations of ordinary smooth manifolds.

\subsection{Poly-Line Bundles} \label{polyLine}

Our aim is to define a category whose objects generalise line bundles while retaining the functorial assignment of dimensioned algebras. We begin by defining a \textbf{poly-line bundle} simply as an ordered collection of line bundles over a common base manifold
\begin{equation}
\begin{tikzcd}
L_1 \cdots L_m \arrow[d,"...",shift right=2.5]\arrow[d, shift left=2.3]\\
M
\end{tikzcd}
\end{equation}
We may also denote a poly-line bundle by $\overline{L}_M^m$, $\overline{L}_M$ or $\overline{L}$ depending on context. A \textbf{morphism of poly-line bundles} or, simply, a \textbf{factor} is an ordered collection of pair-wise line bundle morphisms covering the same smooth map between the base manifolds
\begin{equation}
\begin{tikzcd}
L_1 \cdots L_m \arrow[d,"...",shift right=2.5]\arrow[d, shift left=2.3] \arrow[r, "\overline{B}"] & L_1' \cdots L_n' \arrow[d,"...",shift right=2.5]\arrow[d, shift left=2.3]\\
M \arrow[r,"\varphi"'] & N
\end{tikzcd}
\end{equation}
here $\overline{B}$ stands for the collection of line bundle morphisms $B^i:L_i\to L_j'$ with $i\in\{1,\dots,m\}$, all covering the same smooth map $\varphi: M \to N$. By defining composition of factors in the obvious way we obtain the \textbf{category of poly-line bundles} $\textsf{polyLine}_\Man$. It follows by construction that the ordinary category of line bundles is recovered as a full subcategory of poly-line bundles $\Line_\Man\subset \textsf{polyLine}_\Man$.\newline

Since we are dealing with collections of ordinary line bundles, many of the notions introduced in Section \ref{LineBundles} extend trivially to the category of poly-line bundles. \textbf{Submersion}, \textbf{embedding} and \textbf{diffeomorphic factors} are defined in the obvious way and it is easy to see that Proposition \ref{SubManLineBundle}, stating that a submanifold $i:S\hookrightarrow M$ induces and embedding factor of line bundles, extends trivially to poly-line bundles.\newline

Similarly to the line-bundle case, Cartesian products of poly-line bundles are defined via the space of all fibre-wise invertible maps. The \textbf{base product} of two poly-line bundles $\overline{L}_{M_1}^m$ and $\overline{L}_{M_2}^n$ is defined as the set of all fibre-wise pair-wise line morphisms
\begin{equation}
    M_1 \dtimes M_2 := \left\{ B^{ij}_{xy}:(L_x)_i \to (L_y)_j, \, (x,y)\in M_1\times M_2\,, i\in\{1,\dots,m\}\,, j\in\{1,\dots, n\}\right\}.
\end{equation}
This definition clearly recovers the base product of ordinary line bundles, in particular, the projection maps $p_i:M_1 \dtimes M_2 \to M_i$ are defined similarly. We also find the generalisation of Proposition \ref{BaseProductLineBundles}.

\begin{prop}[Base Product of poly-Line Bundles]\label{BaseProductpolyLineBundles}
The base product of poly-line bundles $M_1\dtimes M_2$ is a smooth manifold and the natural $\Real^\times$-actions given by fibre-wise multiplication of factors make $M_1\dtimes M_2$ into a principal bundle
\begin{equation}
    \begin{tikzcd}[column sep=0.1em]
        (\Real^\times)^{nm} & \Acts & M_1\dtimes M_2  \arrow[d,"p_1\times p_2"]\\
         & & M_1\times M_2.
    \end{tikzcd}
\end{equation}
\end{prop}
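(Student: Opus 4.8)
The plan is to reduce the statement to the single-line-bundle case of Proposition \ref{BaseProductLineBundles} by exhibiting the poly base product as a fibre product of ordinary base products. The key observation is that the component morphisms $B^{ij}_{xy}:(L_x)_i\to(L_y)_j$ indexed by distinct pairs $(i,j)\in\{1,\dots,m\}\times\{1,\dots,n\}$ are mutually independent: a point of $M_1\dtimes M_2$ lying over $(x,y)\in M_1\times M_2$ amounts to an arbitrary choice, for each such pair, of one invertible linear map $(L_x)_i\to(L_y)_j$, with no coupling between different pairs. Writing $P_{ij}$ for the ordinary base product of the two constituent line bundles $L_i\to M_1$ and $L_j'\to M_2$ --- a principal $\Real^\times$-bundle over $M_1\times M_2$ by Proposition \ref{BaseProductLineBundles} --- this yields a natural bijection, compatible with the projections to $M_1\times M_2$,
\begin{equation}
    M_1\dtimes M_2\;\cong\; P_{11}\times_{M_1\times M_2}\cdots\times_{M_1\times M_2}P_{mn},
\end{equation}
identifying the poly base product with the fibre product of the $nm$ principal $\Real^\times$-bundles $P_{ij}$ over their common base.

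First I would transport the smooth structure through this identification, mirroring the chart argument of Proposition \ref{BaseProductLineBundles}. Choosing a common open $U_1\subset M_1$ over which every $L_i$ trivialises and a common open $U_2\subset M_2$ over which every $L_j'$ trivialises, the simultaneous trivialisations $L_i|_{U_1}\cong U_1\times\Real$ and $L_j'|_{U_2}\cong U_2\times\Real$ identify each invertible map $B^{ij}$ with a nonzero real number, and hence furnish a chart of the form $U_1\times U_2\times(\Real^\times)^{nm}$ for the open set $(p_1\times p_2)^{-1}(U_1\times U_2)$. The transition functions between two such charts are the $nm$-tuples formed by the individual transition functions of the $P_{ij}$, so they are smooth and $M_1\dtimes M_2$ is a smooth manifold for which $p_1\times p_2$ is a surjective submersion.

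Next I would verify the principal structure. The group $(\Real^\times)^{nm}$ acts on $M_1\dtimes M_2$ factor-wise, the $(i,j)$-th copy of $\Real^\times$ rescaling the component $B^{ij}$ by the fibre-wise action on $P_{ij}$; this action is free and transitive on fibres because it is so on each factor. Local triviality follows from the charts above, and the cocycle condition for $M_1\dtimes M_2$ is precisely the product of the $nm$ cocycle conditions coming from the pairs of constituent line bundles, each of which is supplied by Proposition \ref{BaseProductLineBundles}. The argument carries no conceptual difficulty beyond the single-bundle case; the main task is organisational, namely making the fibre-product identification precise and keeping track of the $nm$ indices so that the smooth and principal structures assemble from the individual factors without any coupling between distinct pairs $(i,j)$.
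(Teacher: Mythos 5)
Your proof is correct, but it takes a genuinely different --- and in fact more careful --- route than the paper. The paper's own proof is a two-line reduction to Proposition \ref{BaseProductLineBundles} via the decomposition $M_1\dtimes M_2 = \bigcup_{i,j} M_1^i\dtimes M_2^j$, i.e.\ it presents the poly base product as the \emph{union} of the $nm$ pairwise base products, which is what the set-builder definition in Section \ref{polyLine} literally describes (an element is a single map $B^{ij}_{xy}$ for one pair $(i,j)$). You instead identify $M_1\dtimes M_2$ with the \emph{fibre product} $P_{11}\times_{M_1\times M_2}\cdots\times_{M_1\times M_2}P_{mn}$, reading a point as a simultaneous choice of one invertible map for every pair $(i,j)$. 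The distinction is substantive, not organisational: under the disjoint-union reading the fibre of $p_1\times p_2$ over $(x,y)$ is $nm$ disjoint copies of $\Real^\times$, a one-dimensional space on which $(\Real^\times)^{nm}$ cannot act freely and transitively, so that reading cannot support the principal bundle structure claimed in the statement; under your fibre-product reading the fibre is genuinely diffeomorphic to $(\Real^\times)^{nm}$, which is exactly what the statement requires. In other words, your decomposition is the one that actually proves the proposition as stated, while the paper's proof only does so if its union symbol is read, charitably, as a fibre product over the common base $M_1\times M_2$. Your remaining steps --- common trivialising opens giving charts $U_1\times U_2\times(\Real^\times)^{nm}$, freeness and fibre-transitivity factor by factor, and the cocycle condition inherited from the constituent line bundles --- mirror the single-line argument of Proposition \ref{BaseProductLineBundles} and are routine; the real content of your proposal is the fibre-product identification, which the paper leaves implicit and, as written, ambiguous.
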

\begin{proof}
This result follows directly from Proposition \ref{BaseProductLineBundles} and the observation that 
\begin{equation}
    M_1\dtimes M_2 = \bigcup_{\substack{i\in\{1,\dots,m\}\\j\in\{1,\dots, n\}}} M_1^i\dtimes M_2^j
\end{equation}
where $M_1^i\dtimes M_2^j$ denotes the base product of the ordinary line bundles $(L_i)_{M_1}$ and $(L_j)_{M_2}$.
\end{proof}

The first indication that the base product of poly-line bundles is the correct generalisation of the Cartesian product of ordinary manifolds comes in the form of the poly-line bundle version of the graph of a map: morphisms of poly-line bundles $\overline{B}:\overline{L}_{M_1}^m \to \overline{L}_{M_2}^n$ covering a smooth map $\varphi: M_1\to M_2$ can be equivalently characterised as special submanifolds of the base product $M_1\dtimes M_2$, the correspondence is given by defining the following subset:
\begin{equation}
    \text{graph}(\overline{B}):=\{C_{xy}:(L_i)_x\to (L_j)_y \,|\quad \varphi(x)=y, \quad C_{xy}=B^i_x\} \subset M_1\dtimes M_2
\end{equation}
which is easily shown to be a submanifold and is then called the \textbf{graph} of the factor $\overline{B}$. We confirm that the base product construction generalises Cartesian products appropriately from the fact that the projections 
\begin{equation}
\begin{tikzcd}
M_1 & M_1 \dtimes M_2\arrow[l,"p_1"']\arrow[r,"p_2"] & M_2
\end{tikzcd}
\end{equation}
allow us to pull-back the line bundles onto the base product to form a poly-line bundle:
\begin{equation}
    \overline{L} \utimes \overline{L}' := p_1^*L_1 \cdots p_1^*L_m \, p_2^*L_1' \cdots p_2^*L_n'
\end{equation}
which turns out to be a well-defined categorical product.

\begin{prop}[Product of poly-Line Bundles]\label{ProductpolyLineBundle}
The line product construction $\utimes$ is a categorical product
\begin{equation}
    \normalfont\utimes:\textsf{polyLine}_\Man \times \textsf{polyLine}_\Man \to \textsf{polyLine}_\Man
\end{equation}
with the corresponding projection factors fitting in the commutative diagram:
\begin{equation}\label{polyLineProductCommutativeDiagram}
\begin{tikzcd}
\overline{L}_1 \arrow[d] & \overline{L}_1\utimes \overline{L}_2 \arrow[l,"\overline{P}_1"']\arrow[d]\arrow[r,"\overline{P}_2"] & \overline{L}_2\arrow[d] \\
M_1 & M_1 \dtimes M_2\arrow[l,"p_1"]\arrow[r,"p_2"'] & M_2
\end{tikzcd}
\end{equation}
\end{prop}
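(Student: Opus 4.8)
The plan is to verify the two defining features of a categorical product: that $\overline{L}_1\utimes\overline{L}_2$ together with $\overline{P}_1,\overline{P}_2$ is a well-defined object with projection factors, and that it enjoys the universal mapping property. Since a poly-line bundle is nothing but an ordered collection of ordinary line bundles over a common base, the guiding strategy throughout is to reduce every claim to the single-line-bundle statement of Proposition \ref{ProductLineBundle}, applied componentwise, and to assemble the resulting pieces using the decomposition of the base product recorded in Proposition \ref{BaseProductpolyLineBundles}. Once each $\overline{L}_1\utimes\overline{L}_2$ is shown to be a product in the categorical sense, functoriality of the assignment $\utimes$ on morphisms follows formally from the universal property, so no separate argument is needed for that.

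First I would fix the object and its projections. By Proposition \ref{BaseProductpolyLineBundles} the base product $M_1\dtimes M_2$ is a smooth manifold equipped with the canonical projections $p_1,p_2$ onto $M_1$ and $M_2$; hence the pull-backs $p_1^*L_k$ and $p_2^*L_l'$ are genuine line bundles over it and $\overline{L}_1\utimes\overline{L}_2$ is a well-defined poly-line bundle with $m+n$ ordered components. The projection factor $\overline{P}_1$ is then the ordered collection of the canonical pull-back factors $p_1^*L_k\to L_k$ for $k=1,\dots,m$, each fibre-wise invertible by construction of the pull-back bundle (cf. Proposition \ref{SubManLineBundle}); $\overline{P}_2$ is defined identically on the remaining $n$ components. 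Since these cover $p_1$ and $p_2$ respectively, the commutative square in the statement holds at the level of base manifolds.

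Next I would establish the universal property. Given any test poly-line bundle $\overline{K}$ over $N$ together with factors $\overline{F}_1:\overline{K}\to\overline{L}_1$ and $\overline{F}_2:\overline{K}\to\overline{L}_2$ covering smooth maps $\varphi_1:N\to M_1$ and $\varphi_2:N\to M_2$, the mediating factor is built exactly as in the single-line case: for each relevant pair of component line bundles $(L_i,L_j')$ one applies the ordinary universal property of Proposition \ref{ProductLineBundle} to produce a component mediating factor, and one assembles these into $\overline{F}:\overline{K}\to\overline{L}_1\utimes\overline{L}_2$ whose first $m$ components are those of $\overline{F}_1$ transported through the pull-back and whose last $n$ are those of $\overline{F}_2$. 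The identities $\overline{P}_1\circ\overline{F}=\overline{F}_1$ and $\overline{P}_2\circ\overline{F}=\overline{F}_2$ then hold componentwise by the same fibre-wise computation as in Proposition \ref{ProductLineBundle}, and uniqueness is immediate because both the underlying base map into $M_1\dtimes M_2$ and each component of $\overline{F}$ are forced by these two conditions.

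The main obstacle is \emph{coherence and smoothness of the assembled base map} $\psi:N\to M_1\dtimes M_2$: one must check that the componentwise data produced above really define a single smooth map into the base product, rather than an unrelated pointwise collection of linear maps living on the separate factors. Here I would invoke the decomposition $M_1\dtimes M_2=\bigcup_{i,j}M_1^i\dtimes M_2^j$ from Proposition \ref{BaseProductpolyLineBundles}: restricted to each factor $M_1^i\dtimes M_2^j$ the map $\psi$ coincides with the mediating base map of the ordinary product of $L_i$ and $L_j'$, whose smoothness is guaranteed by Proposition \ref{ProductLineBundle}, while the local triviality furnished by the principal $(\Real^\times)^{nm}$-bundle structure of Proposition \ref{BaseProductpolyLineBundles} lets these smooth pieces be read off simultaneously in a common chart. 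Once $\psi$ is smooth, the mediating factor $\overline{F}$ is smooth as a collection of pull-back-transported factors, and the verification that $\utimes$ is a categorical product is complete; the remaining manipulations are routine bookkeeping that introduce no new ideas beyond the single-line case.
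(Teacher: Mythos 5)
Your proposal attempts strictly more than the paper does --- the paper's own proof is only a construction sketch (concatenate the pull-backs, take the ``canonical projections to each of the ordered lists'', define products of morphisms as ordered unions of pair-wise morphisms) and never verifies the universal property --- but the part you add contains two genuine gaps. The first is the assembly of the mediating base map $\psi:N\to M_1\dtimes M_2$. For $\overline{L}_1\utimes\overline{L}_2$ to be a poly-line bundle over a single base carrying the $(\Real^\times)^{nm}$-principal structure of Proposition \ref{BaseProductpolyLineBundles} (which you invoke), a point of $M_1\dtimes M_2$ must be a \emph{simultaneous} choice of fibre maps $B^{ij}_{xy}:(L_i)_x\to (L_j')_y$ for \emph{all} $nm$ pairs $(i,j)$; the ``union'' in that proposition is really a fibred product over $M_1\times M_2$. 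Pair-wise applications of Proposition \ref{ProductLineBundle} only produce the coordinate $B^{ij}=(F_2^a)_y\circ (F_1^a)_y^{-1}$ for those pairs $(i,j)$ that are realised by a common source component $K_a$ of $\overline{K}$; for pairs realised by no common source nothing in the data $(\overline{F}_1,\overline{F}_2)$ determines $B^{ij}$, and when two source components realise the same pair the two prescriptions need not agree. So $\psi$ is not even well defined, let alone smooth; knowing smooth maps into some of the factors $M_1^i\dtimes M_2^j$ does not assemble into a map into the fibred product of all of them, which is the obstacle your coherence paragraph (addressed only at smoothness) does not touch.

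The second gap is the claim that ``uniqueness is immediate.'' In the single-line case uniqueness hinges on the second projection being the \emph{tautological} evaluation: $P_2$ sends the fibre $(p_1^*L_1)_{B}=(L_1)_{x_1}$ to $(L_2)_{x_2}$ by applying the point $B$ itself, so the identity $P_2\circ F=F_2$ forces $\psi(y)=(F_2)_y\circ(F_1)_y^{-1}$. In the concatenation product both $\overline{P}_1$ and $\overline{P}_2$ are collections of pull-back projections --- this is precisely the ``contrast with Proposition \ref{ProductLineBundle}'' that the paper itself points out --- and composing a morphism covering $\psi$ with a pull-back projection only sees $p_i\circ\psi$: the fibre of $p_1^*L_k$ at $\psi(y)$ is $(L_k)_{\varphi_1(y)}$ no matter where $\psi(y)$ sits in the $(\Real^\times)^{nm}$-fibre. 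Consequently the very same fibre-wise linear maps define, for \emph{every} smooth lift $\psi$ of $(\varphi_1,\varphi_2)$, a morphism satisfying both projection identities, so the two conditions do not pin down the base map and uniqueness fails as argued. Closing this requires either building tautological components into the projection factors or otherwise showing the fibre coordinate of $\psi$ is forced; neither your argument nor the paper's sketch supplies this, and it is the essential point on which the ``categorical product'' assertion rests.
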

\begin{proof}
This result follows simply by keeping track of the order of collections of line bundles and the ordinary construction of pull-back line bundles. Note that, in contrast with Proposition \ref{ProductLineBundle}, the product of poly-line bundles is defined by concatenation of line bundle pull-backs, consequently, the projection factors $\overline{P}_i$ are simply defined as the canonical projections to each of the ordered lists. This gives the obvious product of poly-line bundle morphisms as simply the ordered union of all the partial pair-wise line bundle morphisms.
\end{proof}

The internal \textbf{symmetries} of a poly-line bundle $\overline{L}_M$ are encapsulated by its base self-product $M \dtimes M$, which becomes a groupoid with the obvious composition of fibre-wise maps and the projections $p_1,p_2: M \dtimes M\to M$ acting as source and target maps. This is, indeed, a generalisation of the general linear groupoid of a line bundle $\text{GL}(L)$. We call $M \dtimes M$ the \textbf{general linear groupoid} of the poly-line bundle $\overline{L}_M$ and denote it by $\text{GL}(\overline{L})$. The group of bisections of this groupoid is the poly-line bundle analogue of the group of diffeomorphisms of ordinary manifolds and the group of automorphisms of line bundles.\newline

Although it seems natural to consider a collection of modules of sections as the algebraic counterpart to a poly-line bundle, we shall ignore this intermediate object and proceed to define the \textbf{power dimensioned ring} of a poly-line bundle $(L_1\cdots L_m)_M$ as
\begin{equation}
    \Sec{L_1\cdots L_m} := \bigcup_{k_1,\dots,k_m\in\Int} \Sec{L_1^{k_1} \otimes \cdots \otimes L_m^{k_m}}
\end{equation}
which carries a $\Int^m$-dimensional addition and a dimensioned multiplication $\odot$ given by the ordered tensor product of power ring products of each of the factors. As in the case of lines, the power of a poly-line bundle can be reconstructed from the tensor product of the power dimensioned rings regarded as dimensioned $\Cin{M}$-algebras:
\begin{equation}\label{dimpoweriso}
    \Sec{L_1\cdots L_m} \cong \Sec{L_1}^\odot \otimes_{\Cin{M}} \cdots \otimes_{\Cin{M}} \Sec{L_m}^\odot.
\end{equation}
This construction generalises that of the power ring of a line bundle in Section \ref{LinesToDim} and it retains the same functoriality property.

\begin{prop}[The Power Functor for poly-Line Bundles] \label{PowerFunctorpolyLineBundles}
The assignment of the power dimensioned ring to a poly-line bundle is a contravariant functor
\begin{equation}
\normalfont
    \odot : \textsf{polyLine}_\Man \to \textsf{DimRing}.
\end{equation}
\end{prop}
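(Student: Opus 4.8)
The plan is to reduce the statement to the single line bundle case of Proposition \ref{PowerFunctorLineBundles}, assembling the power of a poly-factor out of the powers of its constituent line bundle factors by means of the tensor product decomposition (\ref{dimpoweriso}). A factor $\overline{B}:\overline{L}^m_{M_1}\to\overline{L}'^n_{M_2}$ covering $\varphi:M_1\to M_2$ matches to each component $L'_l$ of the target a component $L_{\sigma(l)}$ of the source together with a regular line bundle factor $B_l:L_{\sigma(l)}\to L'_l$ over $\varphi$. Each $B_l$ induces, by Proposition \ref{PowerFunctorLineBundles}, a dimensioned ring morphism $(B_l)^\odot:\Sec{L'_l}^\odot\to\Sec{L_{\sigma(l)}}^\odot$ covering $\Id_\Int$. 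Using (\ref{dimpoweriso}), I would define $\overline{B}^\odot:\Sec{L'_1\cdots L'_n}\to\Sec{L_1\cdots L_m}$ on a homogeneous generator $s'_1\otimes\cdots\otimes s'_n$ by pulling back each slot $s'_l$ through its matched $(B_l)^\odot$ and re-assembling the results as the ordered $\odot$-product in $\Sec{L_1\cdots L_m}$, with any unmatched source component contributing the unit $1$ in dimension $0$. This covers the additive monoid morphism $\phi:\Int^n\to\Int^m$, $\phi(k)_i=\sum_{l:\,\sigma(l)=i}k_l$, induced by the matching.

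Next I would verify the dimensioned ring axioms for $\overline{B}^\odot$. Additivity and slicewise $\Real$-linearity are inherited slot by slot from the constituent maps. Multiplicativity follows because the $\odot$-product of a poly-power ring is, by construction, the ordered tensor product of the per-component $\odot$-products, each of which is preserved by its $(B_l)^\odot$ thanks to Proposition \ref{PowerFunctorLineBundles}; in particular the compatibility of the cross-pairings mixing positive and negative tensor powers has already been discharged there and simply propagates slotwise. Preservation of the unit is immediate, so $\overline{B}^\odot$ is a morphism of dimensioned rings covering the additive map $\phi$.

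Contravariant functoriality then reduces to the single bundle identity. Given a further factor $\overline{C}:\overline{L}'^n_{M_2}\to\overline{L}''^p_{M_3}$, the constituents of $\overline{C}\circ\overline{B}$ are the composites of the matched constituents of $\overline{C}$ and $\overline{B}$, so applying $(C\circ B)^\odot=B^\odot\circ C^\odot$ of Proposition \ref{PowerFunctorLineBundles} in each slot yields $(\overline{C}\circ\overline{B})^\odot=\overline{B}^\odot\circ\overline{C}^\odot$, and $(\Id_{\overline{L}})^\odot=\Id$ likewise. The main obstacle I anticipate is purely combinatorial: pinning down how a factor matches the two ordered collections --- the matching must be indexed by the target components, so that every target slot is pulled back and unmatched source slots receive the unit --- and checking that these matchings compose correctly, together with confirming through (\ref{dimpoweriso}) that the slotwise assembly is a well-defined map of base-ring tensor products. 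Once this bookkeeping is fixed, every analytic and algebraic subtlety has already been settled for a single line bundle in Proposition \ref{PowerFunctorLineBundles}.
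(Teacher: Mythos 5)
Your proposal is correct and follows the same basic strategy as the paper's proof --- reduce everything to Proposition \ref{PowerFunctorLineBundles}, assemble $\overline{B}^\odot$ slotwise out of the constituent single-line pull-backs, and use the decomposition (\ref{dimpoweriso}) together with braiding to organise the bookkeeping --- but the two treatments diverge exactly at the combinatorial point you flagged, and your handling of it is the sounder one. The paper defines a poly-factor as a collection indexed by the \emph{source} components, $B^i:L_i\to L_j'$ for $i\in\{1,\dots,m\}$, and its proof keeps that convention: the slice maps are written as $(B^1)^{k_1}\otimes\cdots\otimes(B^m)^{k_m}$ indexed by source dimensions $(k_1,\dots,k_m)\in\Int^m$, the assembled map is displayed as $\Sec{L_1\cdots L_m}\to\Sec{L_1'\cdots L_n'}$, and the dimension map as $\beta:\Int^m\to\Int^n$ --- i.e.\ everything is written in the covariant direction, even though the functor being constructed (like the single-line case) is contravariant. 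Read contravariantly, the source-indexed convention creates a genuine well-definedness issue that the paper passes over: if two source lines $L_{i_1},L_{i_2}$ both map to the same target line $L'_j$, the pull-back of a target slice such as $\Sec{(L_j')^{2}}$ could land in source dimension $(2,0)$, $(1,1)$ or $(0,2)$ with no canonical choice, and a target line hit by no constituent factor cannot be pulled back at all. Your convention --- one factor $B_l:L_{\sigma(l)}\to L_l'$ per \emph{target} component, unmatched source slots receiving the unit, dimension map $\phi:\Int^n\to\Int^m$ given by $\phi(k)_i=\sum_{l:\,\sigma(l)=i}k_l$ --- is precisely what removes this ambiguity and yields a single-valued dimensioned ring morphism in the correct direction, with the multiplicativity and functoriality checks then genuinely inherited slot by slot as you say. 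The one caveat is that your argument thereby proves the proposition for the target-indexed reading of morphisms (equivalently, for matchings selecting one source line per target line), which is not the paper's literal definition; but since the paper's own source-indexed setup only supports a canonical contravariant power map under such a reading, this discrepancy is a defect of the paper's formulation rather than a gap in your proof.
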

\begin{proof}
To prove this statement we follow Proposition \ref{PowerFunctorLineBundles} closely and we give some observations about the power of a factor $\overline{B}:\overline{L}_{M_1}^m \to \overline{L'}_{M_2}^n$. We just need to do some bookkeeping to account for the multiple pair-wise line morphisms between the poly-line bundles. Firstly note that the each of the pair-wise line bundle maps has arbitrary tensor powers $(B^{ij})^k:(L_i)^k\to (L_j')^k$ but, crucially, the zeroth tensor power, corresponding to the pull-back of the smooth map between the base manifolds $b:M_1\to M_2$, is common to all of them. It is then clear that the collection of all the pair-wise powers gives a single dimensioned map between the power rings defined on each dimension slice by:
\begin{equation}
    \overline{B}^\odot|_{(k_1,\dots,k_m)}:= (B^1)^{k_1}\otimes \cdots \otimes (B^m)^{k_m}.
\end{equation}
The images of these powers of $\overline{B}$ are, by construction, subsets of some tensor powers of the image poly-line bundle, hence, using the braiding isomorphisms of tensor products of lines, we see that the union of all the possible tensor powers gives a well-defined map of sets:
\begin{equation}
    \overline{B}^\odot: \Sec{L_1\cdots L_m}\to \Sec{L_1'\cdots L_n'}.
\end{equation}
This map is slice-wise $\Real$-linear and its dimension map $\beta: \Int^m \to \Int^n$ operates as projected addition determined by the assignment between lines given by the collection of pair-wise line bundle maps $\{B^i\,, i=1,\dots,m\}$. This means that $\beta$ acts as the identity on the $j$-th component if there is only one factor $B^i:L_i\to L_j'$ and additively when there are more factors mapping into the same $j$-th target line, i.e. suppose both $B^{i_1}:L_{i_1}\to L_j$ and $B^{i_2}:L_{i_2}\to L_j$, then
\begin{equation}
    \beta: (\dots, n_{i_1}, \dots, n_{i_2},\dots) \mapsto (\dots, n_{i_1} + n_{i_2}, \dots).
\end{equation}
This shows that $\overline{B}^\odot$ is a morphism of dimensional abelian groups, it remains to show that it is a morphism of dimensioned rings. This follows easily from a similar reasoning to the proof of Proposition \ref{PowerFunctorLineBundles} and the key observation that dimensionless ring elements, i.e. functions on the base, are always mapped under pull-back of the smooth map between base manifolds, thus all the multilinear arrangements $\Cin{M}$-linear factors are mapped consistently under arbitrary tensor powers. Functoriality follows from the basic properties of factor pull-backs presented in Section \ref{LineBundles}.
\end{proof}

We can easily extend the notion of unit to poly-line bundles: let a poly-line bundle $(L_1\cdots L_m)_M$, a \textbf{local choice of units} on an open subset $U\subset M$ is a collection of local non-vanishing sections $u_i\in\Sec{L_i}$. The argument used in Proposition \ref{PowerIsAFunctorLine} for the linear case extends easily to the case of sections so it is straightforward to show that a choice of units on $U\subset M$ induces an isomorphism:
\begin{equation}
    \Sec{L_1\cdots L_m}|_U\cong \Cin{U} \times \Int^m
\end{equation}
thus showing that poly-line bundles can be locally described by trivial dimensioned rings.\newline

An embedded submanifold $i:S\hookrightarrow M$ in a poly-line bundle $\overline{L}_M$ induces a collection of restricted subbundles simply by taking the restriction of each individual line bundle, this gives the restricted poly-line bundle with a canonical inclusion factor $\overline{\iota}:\overline{L}_S\to \overline{L}_M$. Similarly to the case of ordinary line bundles, restrictions to submanifolds appear as quotients of dimensioned rings under the power functor.

\begin{prop}[Vanishing Dimensioned Ideal of a Submanifold in a poly-Line Bundle] \label{PolyDimVanishingIdeal}
An embedded submanifold $i:S\hookrightarrow M$ in a poly-line bundle $\overline{L}_M$ defines a dimensioned ideal $I_S\subset \Sec{\overline{L}}$ that allows to characterize (perhaps only locally) the restricted power as a quotient of dimensioned rings
\begin{equation}
    \Sec{\overline{L}_S}\cong \Sec{\overline{L}}/I_S.
\end{equation}
\end{prop}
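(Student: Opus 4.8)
The plan is to mirror the single-line argument of Proposition \ref{DimVanishingIdeal} almost verbatim, the only genuinely new feature being the bookkeeping forced by the multi-index dimension group $\Int^m$ in place of $\Int$. First I would define the candidate ideal directly as the set of sections of all the multi-tensor powers that vanish on the submanifold,
\begin{equation}
    I_S:=\{a\in \Sec{L_1^{k_1}\otimes \cdots \otimes L_m^{k_m}}\,|\quad a(x)=0 \quad \forall x\in S,\quad (k_1,\dots,k_m)\in\Int^m\},
\end{equation}
and observe, exactly as in the single-line case, that this is the kernel of the power of the canonical inclusion factor, $I_S=\Ker{\overline{\iota}^\odot}$, where $\overline{\iota}:\overline{L}_S\to \overline{L}_M$ is the restriction factor introduced above.

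Next I would verify the dimensioned ideal condition $\Sec{\overline{L}}\odot I_S\subset I_S$. Since $\overline{\iota}^\odot$ is a morphism of dimensioned rings by Proposition \ref{PowerFunctorpolyLineBundles}, for any $r\in\Sec{\overline{L}}$ and $a\in I_S$ one has $\overline{\iota}^\odot(r\odot a)=\overline{\iota}^\odot(r)\odot\overline{\iota}^\odot(a)=\overline{\iota}^\odot(r)\odot 0=0$, whence $r\odot a\in\Ker{\overline{\iota}^\odot}=I_S$. This is precisely the functoriality argument of Proposition \ref{DimVanishingIdeal}, now applied with the poly-line power functor. By construction the dimensionless slice $(I_S)_0$ is the ordinary ideal of functions vanishing on $S$, and after restricting to a local neighbourhood trivialising all the $L_i$ each slice of $I_S$ is generated over the power ring by dimensionless elements: a section of a mixed power is locally a function times a product of local frames, and it vanishes on $S$ exactly when that function lies in $(I_S)_0$.

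With $I_S$ established as a dimensioned ideal, Proposition \ref{QuotDimRing} produces the quotient dimensioned ring $\Sec{\overline{L}}/I_S$ together with its projection morphism, and it remains to identify this quotient with the power of the restricted poly-line bundle. Here I would argue slice by slice: restricting the multi-tensor power $L_1^{k_1}\otimes\cdots\otimes L_m^{k_m}$ to $S$ yields $(L_1)_S^{k_1}\otimes\cdots\otimes (L_m)_S^{k_m}$, and the single-line identification $\Sec{(L_i)_S}\cong\Sec{L_i}/\Gamma_{S,i}$ generalises to each mixed power, so that two extensions represent the same restricted section precisely when they differ by an element of $I_S$ in the corresponding dimension. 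This delivers the desired isomorphism $\Sec{\overline{L}_S}\cong \Sec{\overline{L}}/I_S$.

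The step I expect to require the most care is this final identification across the mixed powers: whereas the single-line case only compares $\Sec{L^n}$ with $\Sec{L_S^n}$, here one must check that restriction and quotient commute uniformly over all multi-indices $(k_1,\dots,k_m)\in\Int^m$. The cleanest way to discharge this is to invoke the tensor-product decomposition \eqref{dimpoweriso}, reducing the claim to Proposition \ref{DimVanishingIdeal} applied to each factor together with the base change of the dimensioned tensor product from $\Cin{M}$ to $\Cin{S}\cong\Cin{M}/(I_S)_0$. The potential obstacle is verifying that this base change is compatible with the quotient by $I_S$, which follows from the right-exactness behaviour of the dimensioned tensor product and the fact that $I_S$ is generated by its dimensionless slice; as in the single-line case, the identification may only hold locally if this global generation fails.
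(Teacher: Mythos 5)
Your proposal is correct and follows essentially the same route as the paper's proof: define $I_S$ as the kernel of $\overline{\iota}^\odot$, invoke the tensor-product decomposition \eqref{dimpoweriso} specialised over $\Real$ to reduce everything to the single-line case, and conclude via Proposition \ref{DimVanishingIdeal} (with the same local caveat about generation by the dimensionless slice). The paper's version is just a terser statement of this argument; your added detail on the ideal condition via functoriality and the slice-wise identification is exactly what the paper leaves implicit.
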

\begin{proof}
The vanishing ideal $I_S$ is defined in the obvious way:
\begin{equation}
    I_S:=\{s\in \Sec{\overline{L}} \, | \quad \overline{\iota}^\odot s=0\}.
\end{equation}
By trivially regarding $\Real\subset \Cin{M}$, isomorphism (\ref{dimpoweriso}) can be specialised for commutative $\Real$-algebras:
\begin{equation}
    \Sec{L_1\cdots L_m} \cong \Sec{L_1}^\odot \otimes_\Real \cdots \otimes_\Real \Sec{L_m}^\odot,
\end{equation}
then it follows that $I_S$ is generated by $\odot$-multiplication of the dimensioned vanishing ideals of each line bundle $I^i_S$. The result then follows as a direct consequence of Proposition \ref{DimVanishingIdeal}.
\end{proof}

The definition of poly-line bundles was motivated by the interaction of the product of ordinary line bundles with the power functor, it is then unsurprising that the product of poly-line bundles has a natural dimensioned correlate under the power functor.

\begin{prop}[Power Dimensioned Ring of poly-Line Bundle Products] \label{PowerpolyLineBundleProduct}
Let $\overline{L}_1$ and $\overline{L}_2$ be poly-line bundles, then there is a natural inclusion of dimensioned rings
\begin{equation}
    \Sec{\overline{L}_1} \otimes_\Real \Sec{\overline{L}_2} \hookrightarrow \Sec{\overline{L}_1 \utimes \overline{L}_2}
\end{equation}
where the tensor product is taken as commutative dimensioned $\Real$-algebras.
\end{prop}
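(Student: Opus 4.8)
The plan is to follow the proof of Proposition \ref{PowerLineBundleProduct2} almost verbatim, accounting only for the extra bookkeeping introduced by the multiple line bundles. First I would invoke the projection factors $\overline{P}_1:\overline{L}_1\utimes\overline{L}_2\to\overline{L}_1$ and $\overline{P}_2:\overline{L}_1\utimes\overline{L}_2\to\overline{L}_2$ supplied by Proposition \ref{ProductpolyLineBundle}. Since these are morphisms of poly-line bundles, the contravariant power functor of Proposition \ref{PowerFunctorpolyLineBundles} yields dimensioned ring morphisms $\overline{P}_1^\odot:\Sec{\overline{L}_1}\to\Sec{\overline{L}_1\utimes\overline{L}_2}$ and $\overline{P}_2^\odot:\Sec{\overline{L}_2}\to\Sec{\overline{L}_1\utimes\overline{L}_2}$. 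The candidate inclusion is then defined exactly as in the single-line case by $r_1\otimes_\Real r_2\mapsto \overline{P}_1^\odot r_1\odot \overline{P}_2^\odot r_2$, with the $\odot$-multiplication taken inside the product power ring.

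To verify that this is a dimensioned ring homomorphism, I would use the isomorphism (\ref{dimpoweriso}) specialised over the base product, exhibiting the product power ring as the $\Cin{M_1\dtimes M_2}$-tensor product of the pulled-back factor power rings:
\begin{equation}
    \Sec{\overline{L}_1\utimes\overline{L}_2}\cong \Sec{p_1^*L_1}^\odot\otimes_{\Cin{M_1\dtimes M_2}}\cdots\otimes_{\Cin{M_1\dtimes M_2}}\Sec{p_2^*L_n'}^\odot.
\end{equation}
Multiplicativity of the candidate map then reduces to functoriality of $\odot$: given $r_1\otimes_\Real r_2$ and $s_1\otimes_\Real s_2$ one has $(r_1\otimes_\Real r_2)\cdot(s_1\otimes_\Real s_2)=r_1\odot s_1\otimes_\Real r_2\odot s_2$, and mapping both sides through the $\overline{P}_i^\odot$, which are ring morphisms, gives the desired agreement exactly as in Proposition \ref{PowerLineBundleProduct1}.

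The crucial point, and the only place where the poly-line structure genuinely differs from the non-injective Proposition \ref{PowerLineBundleProduct1}, is injectivity. Here I would exploit the fact that $\overline{L}_1\utimes\overline{L}_2$ is defined by \textbf{concatenation} of the two ordered lists of line bundles (Proposition \ref{ProductpolyLineBundle}), so that $\Sec{\overline{L}_1\utimes\overline{L}_2}$ carries dimension group $\Int^{m+n}=\Int^m\times\Int^n$, matching the dimension group of the $\Real$-tensor product on the left. Because no factor of $\overline{L}_1$ is identified with a factor of $\overline{L}_2$, the dimension map of the candidate inclusion is the identity $\Id:\Int^{m+n}\to\Int^{m+n}$, and on each fixed multidegree the map restricts to the standard injection of a tensor product of sections into sections of the corresponding tensor-product bundle over $M_1\dtimes M_2$. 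This is precisely the contrast with the line product of Proposition \ref{ProductLineBundle}, whose collapsing of the two $\Int$-gradings into a single one was responsible for the failure of injectivity.

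The main obstacle I anticipate is checking the slice-wise injection with care: one must confirm that, after restricting to a fixed multidegree $(k_1,\dots,k_m,l_1,\dots,l_n)\in\Int^{m+n}$, the induced $\Cin{M_1\dtimes M_2}$-linear map is injective. Via the iterated tensor decomposition (\ref{dimpoweriso}), this ultimately reduces to the ordinary-line-bundle injectivity already established in Proposition \ref{PowerLineBundleProduct2}, applied one factor at a time, so no genuinely new analytic input is required beyond organising the indices.
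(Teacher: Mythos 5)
Your proposal is correct and follows essentially the same route as the paper's proof: the paper likewise defines the map $s_1\otimes_\Real s_2\mapsto \overline{P}_1^\odot s_1\odot \overline{P}_2^\odot s_2$ via the projection factors and the power functor, and reduces everything to the argument of Proposition \ref{PowerLineBundleProduct2} applied to collections of lines. Your added detail on the concatenated dimension group $\Int^{m+n}$, the identity dimension map, and slice-wise injectivity simply makes explicit what the paper leaves implicit in its appeal to that earlier proposition.
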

\begin{proof}
This result follows by applying the argument used in the proof of Proposition \ref{PowerLineBundleProduct2} to a pair of collections of lines. Recall that we have canonical projection factors 
\begin{equation}\label{polyLineProductCommutativeDiagram}
\begin{tikzcd}
\overline{L}_1 & \overline{L}_1\utimes \overline{L}_2 \arrow[l,"\overline{P}_1"']\arrow[r,"\overline{P}_2"] & \overline{L}_2
\end{tikzcd}
\end{equation}
then power functor for poly-line bundles allows us to write the inclusion map explicitly:
\begin{equation}
    s_1 \otimes s_2 \mapsto \overline{P_1}^\odot s_1 \odot \overline{P_2}^\odot s_2
\end{equation}
for all $s_1\in\Sec{\overline{L_1}}$ and $s_2\in\Sec{\overline{L_2}}$.
\end{proof}

The results presented so far in this section show that dimensioned rings are indeed the right algebraic counterparts to poly-line bundles when regarded as generalisations of smooth manifolds. In fact, at the algebraic level, there is no essential difference between the dimensioned structures associated to a single line bundle or to a collection of line bundles. Therefore, we can extend the standard formalisms of multiderivations and differential forms to poly-line bundles in a straightforward manner. Let a poly-line bundle $\overline{L}_M$, in  analogy with vector fields on ordinary manifolds, we define the \textbf{dimensioned derivations} as
\begin{equation}
    \Dr{L_1\cdots L_m}:= \Dr{\Sec{L_1\cdots L_m}}.
\end{equation}
By taking the commutator bracket and constructing the obvious dimensioned $\Sec{L_1\cdots L_m}$-modules of antisymmetric tensor powers, we find the \textbf{dimensioned multiderivations} $(\text{Der}^\bullet(L_1\cdots L_m),\wedge)$ with a Schouten bracket defined via:
\begin{align}
    \llbracket D,D' \rrbracket &= [D,D']\\
    \llbracket D,r \rrbracket &= D(r)
\end{align}
for all $D\in\Dr{L_1\cdots L_m}$ and $r\in\Sec{L_1\cdots L_m}$. We call $(\text{Der}^\bullet(L_1\cdots L_m),\wedge,\llbracket\,,\rrbracket)$ the \textbf{dimensioned Gerstenhaber algebra} of the poly-line bundle $\overline{L}_M$. Dually to this construction we find the \textbf{Cartan calculus} of a poly-line bundle: \textbf{dimensioned 1-forms} are defined as the $\Sec{L_1\cdots L_m}$-dual of derivations:
\begin{equation}
    \Omega^1(L_1\cdots L_m):=\left \{\Dr{L_1\cdots L_m} \to \Sec{L_1\cdots L_m}, \, \Sec{L_1\cdots L_m}\text{-linear}\right \}
\end{equation}
which carry a natural $\Sec{L_1\cdots L_m}$-module structure. The \textbf{dimensioned differential}:
\begin{equation}
    \delta: \Sec{L_1\cdots L_m} \to \Omega^1(L_1\cdots L_m)
\end{equation}
is defined in the standard way
\begin{equation}
    \delta r (D): = D(r)
\end{equation}
for all $r\in \Sec{L_1\cdots L_m}$ and $D\in \Dr{L_1\cdots L_m}$. By constructing the dimensioned $\Sec{L_1\cdots L_m}$-modules of antisymmetric $k$-forms on derivations $\Omega^k(L_1\cdots L_m)$ and extending $\delta$ as an exterior derivation in the obvious way, we obtain the \textbf{dimensioned de Rham complex}:
\begin{equation}
    (\Omega^\bullet(L_1\cdots L_m),\wedge,\delta).
\end{equation}

\subsection{Generalising Jacobi Manifolds} \label{GeneralisingJacobi}

In Section \ref{JacToDim} we introduced Jacobi manifolds as line bundles carrying additional structure on their modules of sections. Since poly-line bundles have been shown to be the natural generalisations of line bundles, an obvious way to generalise Jacobi manifolds is to consider additional structure on the associated dimensioned rings. Before we propose some tentative definitions in Section \ref{polyJacobiMan} below, we briefly discuss equivalent characterisations of Jacobi manifolds that may lead to different generalisations when extended to poly-line bundles.\newline

\textbf{1. Local Lie Algebras} \cite{tortorella2017deformations}\textbf{.} The definition of a Jacobi structure most commonly found in the literature uses the notion local Lie brackets, i.e. brackets acting as differential operators in each argument, either on real functions or on sections of a line bundle. As a reminder, recall that the ring structure of functions on a manifold $\Cin{M}$ or the $\Cin{M}$-module structure of the sections of a line bundle $\Sec{L}$ can be used to define differential operators as the generalisation of $\Cin{M}$-linear maps; for instance, zeroth and first degree differential operators on $\Sec{L}$ are defined as $\Real$-linear maps commuting with nested products of functions
\begin{align}
    \text{Diff}_0(L) &:=\left \{ \Delta: \Sec{L}\to \Sec{L}\, |  \quad [\Delta, (f\cdot)]=0, \quad \forall f\in\Cin{M} \right\}\\
    \text{Diff}_1(L) &:=\left \{ \Delta: \Sec{L}\to \Sec{L}\, |  \quad [[\Delta, (f\cdot)],(g\cdot)]=0, \quad \forall f,g\in\Cin{M} \right\}.
\end{align}
Differential operators of arbitrary degree form a filtered (non-commutative) associative algebra with composition and, consequently, a Lie algebra with the commutator bracket. In this picture, a Jacobi structure on a line bundle $L$ is defined as a Lie bracket $\{\,,\}:\Sec{L}\times \Sec{L}\to \Sec{L}$ that acts as a first degree differential operator in each argument or, equivalently, whose adjoint map is a morphism of Lie algebras
\begin{equation}
    \text{ad}_{\{\,,\,\}}:\Sec{L}\to \text{Diff}_1(L).
\end{equation}
Since a poly-line bundle $(L_1\cdots L_m)$ was argued to be the generalisation of a smooth manifold, with its power dimensioned ring $\Sec{L_1\cdots L_m}$ being the analogue of the smooth functions $\Cin{M}$, the direct analogue of a module of sections would be a dimensioned module $\mathcal{S}$ over the dimensioned ring $\Sec{L_1\cdots L_m}$. We could interpret this as the algebraic counterpart of a `line bundle over a poly-line bundle'. The properties of dimensioned modules discussed in Section \ref{dimStruc} ensure that the theory of \textbf{dimensioned differential operators} can be developed in direct analogy with the ordinary case -- in particular, it is easy to see that we can reproduce the above definition of first degree differential operators $\text{Diff}_1(\mathcal{S})$ by demanding commutativity properties with the dimensioned module structure and the $\odot$-multiplication. This presents a clear route to define \textbf{dimensioned Jacobi structures} as a dimensioned Lie algebra structure $(\mathcal{S},\{,\})$ such that the bracket acts as a dimensioned differential operator of first degree on each argument. Ordinary Jacobi manifolds are then recovered as a limit case where the underlying poly-line bundle has only one line $L$ and the dimensioned module is just the power dimensioned ring $\mathcal{S}=\Sec{L}^\odot$ regarded as a dimensioned module over itself.\newline

\textbf{2. Unit-Free Poisson Manifolds} \cite{zapata2020unitfree}\textbf{.} The unit-free approach of Section \ref{LineBundles}, in which sections of line bundles generalised ordinary smooth functions, together with the dimensioned technology of Proposition \ref{DimPoissonAlgebraJacobi}, allows to regard Jacobi manifolds as the direct analogues of Poisson manifolds. In this analogy, the Jacobi bracket is characterised as a dimensioned Poisson algebra on the power ring of the underlying line bundle. Extending to poly-line bundles, it is then straightforward to consider dimensioned Poisson algebras on the associated power dimensioned rings as the natural generalisation of Jacobi manifolds. This route leads to the definition of a \textbf{poly-Jacobi structure} on a poly-line bundle $L_1\cdots L_m$ simply as a dimensioned Poisson algebra on its power dimensioned ring $(\Sec{L_1\cdots L_m},\{\,,\})$. This notion encompasses ordinary Jacobi structures as a limit case where the poly-line bundle is a single line and the dimensioned Poisson bracket has dimension $-1\in\Int$.\newline

\textbf{3. Jacobi Algebroids and Dirac-Jacobi Bundles} \cite{vitagliano2015dirac}\textbf{.} It is well-known that the theory of Lie algebroids and Dirac structures can be extended to the context of Jacobi geometry. Similarly to how a Poisson manifold $(M,\pi)$ can be regarded as a Lie algebroid structure on the cotangent bundle $(\Cot M,\pi^\sharp,[\,,]_\pi)$ or as a class of transversal Dirac structures on the standard Courant algebroid $\Tan M\oplus \Cot M$, a Jacobi manifold $(L,\{\,,\})$ can be characterised by Jacobi algebroid structure on the jet bundle $(\Sec{\Jet^1 L},[\,,])$ or a class of Dirac-Jacobi bundles on $\Der L\oplus \Jet^1 L$. When seen through the dimensioned algebra lens, Lie algebroids and Dirac structures admit a much more direct generalisation: a \textbf{dimensioned Lie-Rinehart algebra} is a pair $(\mathcal{S},\mathcal{A})$ where $\mathcal{S}$ is a dimensioned commutative algebra and $\mathcal{A}$ is a dimensioned Lie algebra such that the usual Lie-Rinehart algebra axioms hold. A \textbf{poly-Lie algebroid} is then the geometric counterpart of a dimensioned Lie-Rinehart algebra on the power dimensioned ring of a poly-line bundle. A prime example of a poly-Lie algebroid is given by the dimensioned derivations of a poly-line bundle with the commutator bracket $(\Dr{\overline{L}},[\,,])$. A theory of \textbf{poly-Dirac structures} follows analogously to the ordinary case. It is easy to see that a dimensioned Poisson structure on a poly-line bundle, e.g. one given by an ordinary Jacobi manifold, induces a dimensioned Lie-Rinehart algebra structure on dimensioned 1-forms by means of a construction similar to the cotangent Lie algebroid of a Poisson manifold. These considerations then suggest poly-Dirac structures as generalisations of Jacobi manifolds.

\subsection{Poly-Jacobi Manifolds}\label{polyJacobiMan}

The notion of poly-Jacobi structure deserves our special attention among the multiple generalisations discussed in Section \ref{GeneralisingJacobi} above. Not only is it the most direct generalisation of ordinary Jacobi manifolds into the dimensioned setting but it also encompasses Poisson structures on an equal basis. In this section we give precise definitions of poly-Jacobi manifolds, present some preliminary results and discuss the subtleties resulting from the dimensioned nature of the algebras associated with them.\newline

A general \textbf{poly-Jacobi manifold} is defined as a poly-line bundle $\overline{L}_M$ whose power dimensioned ring carries a dimensioned Poisson algebra structure $\left (\Sec{\overline{L}},\odot,\{\,,\}\right )$. A poly-line bundle morphism is called a \textbf{poly-Jacobi map} when the induced map between the power dimensioned rings is a dimensioned Poisson algebra homomorphism -- in fact, it suffices to require a dimensioned Lie algebra homomorphism, since powers are always dimensioned commutative algebra homomorphisms by construction. Note that this definition is identical to that of Poisson structures on ordinary smooth manifolds but transposed to the category of poly-line bundles $\textsf{polyLine}_\Man$. In Section \ref{polyLine} we showed that the categorical structure of $\textsf{polyLine}_\Man$ is analogous to that of ordinary smooth manifolds and thus we should expect most of the familiar results in ordinary Poisson geometry to appear in the context of poly-Jacobi manifolds.\newline

The dimensioned Poisson bracket of a poly-Jacobi manifold $\left (\Sec{\overline{L}},\odot,\{\,,\}\right )$ acts as a $\odot$-derivation on each argument, hence, using the exterior calculus on poly-line bundles presented at the end of Section \ref{polyLine}, we can write the bracket as
\begin{equation}
    \{r,s\}=\pi(\delta r, \delta s)
\end{equation}
for all $r,s\in\Sec{\overline{L}}$. The dimensioned $\Sec{\overline{L}}$-bilinear form $\pi\in\text{Der}^2(\overline{L})$ is called the \textbf{dimensioned Poisson biderivation} and the Jacobi identity of the dimensioned bracket $\{\,,\}$ can be shown to be equivalent to the following condition in the dimensioned Gerstenhaber algebra:
\begin{equation}
    \llbracket \pi, \pi \rrbracket=0,
\end{equation}
inducing the obvious \textbf{dimensioned Poisson cohomology} $\text{H}_\pi(\overline{L})$. The Cartan calculus on poly-line bundles allows to express the dimensioned Poisson structure as a dimensioned Lie-Rinehart algebra on the space of dimensioned 1-forms $\Omega^1(\overline{L})$ by setting:
\begin{equation}
    [\alpha,\beta]_\pi:=\mathcal{L}_{\pi^\sharp(\alpha)} \beta -\mathcal{L}_{\pi^\sharp(\beta)}\alpha -\delta (\pi(\alpha,\beta))
\end{equation}
for $\alpha,\beta\in\Omega^1(\overline{L})$ and where $\pi^\sharp: \Omega^1(\overline{L})\to \Dr{\overline{L}}$ is the dimensioned musical map induced from $\pi$ as a bilinear form. This is, of course, the analogue of the ordinary Lie-Poisson algebroid on the cotangent bundle and so we call $(\Omega^1(\overline{L}),\pi^\sharp,[\,,]_\pi)$ the \textbf{poly-Lie-Poisson algebroid}.\newline

An embedded submanifold $i:C\hookrightarrow M$ in a poly-Jacobi manifold $\overline{L}_M$ is called \textbf{coisotropic} if its dimensioned vanishing ideal $I_C\subset \Sec{\overline{L}}$ is closed under the dimensioned Poisson bracket
\begin{equation}
    \{I_C,I_C\}\subset I_C.
\end{equation}
Algebraically, this is simply the condition that the vanishing ideal becomes a coisotrope in the dimensioned Poisson algebra $(\Sec{\overline{L}},\odot,\{\,,\})$.\newline

Let $i:C\hookrightarrow M$ an embedded submanifold, a poly-Jacobi manifold $(\Sec{\overline{L}_M},\{,\})$ \textbf{reduces} to the poly-Jacobi structure $(\Sec{L'},\{\,\}')$ via $\overline{\pi}:\overline{L}_C\to\overline{L}'$ when for all pairs $s_1,s_2\in\Sec{\overline{L}'}$ the identity
\begin{equation*}
    \overline{\pi}^\odot\{s_1,s_2\}'=\overline{\iota}^\odot\{S_1,S_2\}
\end{equation*}
holds for all choices of extensions $S_1,S_2$, i.e all choices of sections $S_1,S_2\in\Sec{\overline{L}}$ satisfying 
\begin{equation*}
    \overline{\pi}^\odot s_1=\overline{\iota}^\odot S_1 \qquad \overline{\pi}^\odot s_2=\overline{\iota}^\odot S_2.
\end{equation*}
This notion is directly analogous to ordinary Poisson reduction in the sense of \cite{marsden1986reduction}. When $C$ is a coisotropic manifold satisfying some technical compatibility conditions with the projection factor $\overline{\pi}$, a \textbf{coisotropic reduction} result will likely follow of which Proposition \ref{CoisotropicReductionDimPoissonReduction} would become a limit case for poly-line bundles with a single line.\newline

It is remarkable how, beyond some elementary bookkeeping of ordered lists of tensor products, the theory of poly-line bundles and poly-Jacobi manifolds has not required any significant update from the standard definitions and constructions in ordinary Poisson geometry. This is yet more evidence supporting our claim that dimensioned algebra provides the right language in which to formulate Jacobi and Poisson geometry on an equal footing. Somewhat surprisingly, this is no longer the case when we aim to construct products of general poly-Jacobi manifolds.\newline

Poly-Jacobi manifolds were anticipated in Section \ref{Products} after Proposition \ref{ProductJacobi2} established the power dimensioned ring $\Sec{L_1,L_2}^\odot$ as the appropriate algebraic counterpart to the line bundle product of two ordinary Jacobi manifolds $L_1$ and $L_2$. Although our definition of poly-Jacobi manifold is indeed directly inspired by this example, it is important to note that the dimensioned Poisson bracket $\{\,,\}_{12}$ in Proposition \ref{ProductJacobi2} is very particular and far from a generic poly-Jacobi structure on $\Sec{L_1,L_2}^\odot$. Recall that our definition of dimensioned Poisson algebra (see Section \ref{dimPoisson}) assumes some fixed homogeneous dimensions (see end of Section \ref{dimStruc}) for the commutative and Lie multiplications. In all our definitions so far, the commutative multiplication has been $\odot$ in the power dimensioned ring which, by construction, is a dimensionless multiplication, i.e. it operates as multiplication by the identity element in the dimension monoid. When we defined poly-Jacobi manifolds above we omitted the fixed homogeneous dimension of the bracket since it has not been relevant for any of the constructions introduced so far. Let us fix some notation: a poly-Jacobi manifold $(\Sec{L_1\cdots L_m},\odot,\{\,,\}_{\overline{k}})$ is said to be of \textbf{dimension} $\overline{k}\in\Int^m$ when the dimension map of the bracket $\beta: \Int^m\times \Int^m\to \Int^m$ is given by:
\begin{equation}
    (\overline{n},\overline{m})\mapsto \overline{n}+\overline{m}+\overline{k}.
\end{equation}
The product bracket of two Jacobi manifolds $(\Sec{L_1L_2},\{\,,\}_{12})$ constructed in Proposition $\ref{ProductJacobi2}$ has dimension $(-1,0)\in \Int^2$ which corresponds to the projected dimensions $-1\in\Int$ of each of the two Jacobi structures onto the first component. This poly-Jacobi structure is special since it is entirely captured by the line bundle product construction of Section \ref{LineBundles}. The two possible brackets that can be defined under the symmetry isomorphism $L_1\utimes L_2\cong L_2\utimes L_1$ correspond simply to the trivial inclusions of $\Sec{L_1}^\odot$ and $\Sec{L_2}^\odot$ into the product power dimensioned ring $\Sec{L_1L_2}\cong \Sec{L_1}^\odot\otimes \Sec{L_2}^\odot$.\newline

We are interested in the general problem of defining products of poly-Jacobi manifolds: given two poly-Jacobi manifolds $(\Sec{\overline{L}},\{\,,\}_{\overline{k}})$ and $(\Sec{\overline{L}'},\{\,,\}_{\overline{q}})$, can we find a natural dimensioned Poisson algebra on $\Sec{\overline{L}\utimes \overline{L'}}$? Since all of our development of poly-Jacobi theory so far has mirrored the constructions of ordinary Poisson geometry, it is reasonable to attempt the analogous construction to the product of Poisson manifolds (see the remark before Proposition \ref{ProductJacobi2}). This seems promising at first in light of Proposition \ref{PowerpolyLineBundleProduct}, which ensures that the dimensioned commutative algebras $(\Sec{\overline{L}},\odot)$ and $(\Sec{\overline{L}'},\odot)$ appear in $(\Sec{\overline{L}\utimes\overline{L}'},\odot)$ as the tensor product. We are then tempted to define the product dimensioned Poisson bracket via the usual formula:
\begin{equation}\label{generaldimPoissproduct}
    \{s_1\otimes s_2,r_1\otimes r_2\}\stackrel{?}{:=}\{s_1,r_1\}_{\overline{k}}\otimes s_2 \odot r_2 + s_1 \odot r_1 \otimes \{s_2,r_2\}_{\overline{q}}
\end{equation}
for all $s_i,r_i\in\Sec{p_i^*\overline{L}_i}$. There is an immediate issue with this formula: the dimensions of the two terms on the right don't match unless $\overline{k}=\overline{0}$ and $\overline{q}=\overline{0}$. Hence, the product bracket cannot be defined in general. This limitation of the dimensioned setting is to be expected from the conditions on the multiplication dimensions of Propositions \ref{DimensionedPoissonProductHetero} and \ref{DimensionedPoissonProductHomo}, which gave the algebraic constructions of tensor products of dimensioned Poisson algebras. The construction of the product of two ordinary Jacobi manifolds discussed above uses the homogeneous tensor product (Proposition \ref{DimensionedPoissonProductHomo}), so-called from the fact that all the modules and rings have the same underlying dimension monoid. The more general situation -- when the two dimensioned Poisson algebras have different underlying dimension monoids -- calls for the use of the heterogeneous tensor product (Proposition \ref{DimensionedPoissonProductHetero}), which will impose some restrictions on the admissible dimensions of the dimensioned Poisson brackets.\newline

The first obvious way to construct a product is to assume that formula (\ref{generaldimPoissproduct}) can be used as is. This forces us to consider \textbf{dimensionless poly-Jacobi manifolds} $(\overline{L}_M,\{\,,\}_{\overline{0}})$ whose brackets have zero dimension. Interestingly, dimensionless poly-Jacobi manifolds recover ordinary Poisson manifolds on their dimensionless slice: since the bracket has dimension $\overline{0}$, the restriction to the dimensionless slice $\Sec{\overline{L}}^{\overline{0}}=\Cin{M}$ gives a map:
\begin{equation}
    \{\,,\}_0:=\{\,,\}_{\overline{0}}|_{\Sec{\overline{L}}^{\overline{0}}}:\Cin{M}\times \Cin{M}\to \Cin{M}
\end{equation}
which satisfies the antisymmetry and Jacobi identities from the fact that $\{\,,\}_{\overline{0}}$ does. Recall that the $\odot$ product restricts to the ordinary point-wise product on functions, then the general Leibniz condition for the dimensioned Poisson bracket $\{\,,\}_{\overline{0}}$ implies the Leibniz identity of the bracket $\{\,,\}_0$ with respect to the ordinary product of functions. It follows that the manifold of a dimensionless poly-Jacobi manifold carries an ordinary Poisson structure.In accordance with this observation we shall rename dimensionless poly-Jacobi manifolds as \textbf{poly-Poisson manifolds}. Products then work as expected.

\begin{prop}[Product of poly-Poisson Manifolds]\label{ProductpolyPoiss}
Let two poly-Poisson manifolds $(\Sec{\overline{L}_1},\{\,,\}^1_{\overline{0}})$ and $(\Sec{\overline{L}_2},\{\,,\}^2_{\overline{0}})$, then there exists a unique poly-Poisson structure on the poly-line bundle product $(\Sec{\overline{L}_1 \utimes \overline{L}_2},\{\,,\}^{12}_{\overline{0}})$ such that the power maps of the canonical projections
\begin{equation}
\begin{tikzcd}
\Sec{\overline{L}_1} \arrow[r,"\overline{P}_1^\odot"] & \Sec{\overline{L}_1 \utimes \overline{L}_2} & \Sec{\overline{L}_2} \arrow[l,"\overline{P}_2^\odot"']
\end{tikzcd}
\end{equation}
are dimensioned Poisson algebra morphisms that induce an inclusion of dimensioned Poisson algebras
\begin{equation}
    \Sec{{\overline{L}}_1} \otimes_\Real \Sec{{\overline{L}}_2} \hookrightarrow \Sec{\overline{L}_1 \utimes \overline{L}_2}
\end{equation}
where the tensor product is defined as in Proposition \ref{DimensionedPoissonProductHetero}.
\end{prop}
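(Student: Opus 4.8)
The plan is to transcribe the proof of Theorem \ref{ProductJacobi2} into the poly-line setting, the only genuinely new features being that the two dimension monoids $\Int^{m_1}$ and $\Int^{m_2}$ of the factors now differ and that the dimensionless hypothesis on both brackets is exactly what rescues the cross-product formula from the dimension mismatch flagged in equation (\ref{generaldimPoissproduct}). First I would record the structural input: by the power functor of Proposition \ref{PowerFunctorpolyLineBundles} the projection factors give dimensioned ring homomorphisms $\overline{P}_i^\odot:\Sec{\overline{L}_i}\to\Sec{\overline{L}_1\utimes\overline{L}_2}$, and by Proposition \ref{PowerpolyLineBundleProduct} together with the local identity $u_1/u_2=\overline{P}_1^\odot u_1\odot\overline{P}_2^\odot u_2^{-1}$ for ratio functions on the base product, the images of $\overline{P}_1^\odot$ and $\overline{P}_2^\odot$ locally $\odot$-generate the entire power ring. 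Since any dimensioned Poisson bracket is a $\odot$-biderivation, and hence a local operator, prescribing it on these generators pins it down everywhere.

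With this in hand I would define $\{\,,\}^{12}_{\overline{0}}$ by the three displayed formulas on the generators and extend it by the Leibniz rule. The dimensionless assumption is what makes this coherent: each term produced by Leibniz then lands in the correct slice of $\Int^{m_1+m_2}$, and in particular the two summands of the cross-expansion carry equal dimension, which is precisely the obstruction that forbade a general poly-Jacobi product. Antisymmetry is inherited from $\{\,,\}^1_{\overline{0}}$ and $\{\,,\}^2_{\overline{0}}$ together with the vanishing of the cross bracket, and the Jacobi identity transfers from the two factors exactly as in the ordinary Poisson product, the mixed Jacobiator terms cancelling because the cross bracket is zero; one may package this cleanly as $\llbracket\pi_{12},\pi_{12}\rrbracket=0$ for the dimensioned Poisson biderivation $\pi_{12}$ built from $\pi_1$ and $\pi_2$ on the product poly-line bundle. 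Uniqueness is then automatic, as the bracket is forced on generators and extended by the Leibniz rule.

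For the inclusion I would first check that Proposition \ref{DimensionedPoissonProductHetero} applies: regarding each $\Sec{\overline{L}_i}$ as a dimensioned commutative $\Real$-algebra with dimensionless commutative and Lie multiplications over the common dimensioned ring $R_P=\Real$ with trivial monoid, the relevant tensor-product monoid collapses to $\Int^{m_1}\times^{\{0\}}\Int^{m_2}\cong\Int^{m_1+m_2}$ and the heterogeneous product bracket is again dimensionless. Taking the injection $s_1\otimes s_2\mapsto\overline{P}_1^\odot s_1\odot\overline{P}_2^\odot s_2$ of Proposition \ref{PowerpolyLineBundleProduct} and expanding by Leibniz and the defining formulas yields, for $s_i,r_i\in\overline{P}_i^\odot\Sec{\overline{L}_i}$,
\begin{equation}
    \{s_1\odot s_2,r_1\odot r_2\}^{12}_{\overline{0}}=\{s_1,r_1\}^1_{\overline{0}}\odot s_2\odot r_2+s_1\odot r_1\odot\{s_2,r_2\}^2_{\overline{0}},
\end{equation}
which is exactly the image of the bracket of Proposition \ref{DimensionedPoissonProductHetero}; hence the inclusion is a morphism of dimensioned Poisson algebras.

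I expect the main obstacle to be not any of the algebraic identities, which descend mechanically from the two factors, but the well-definedness of the Leibniz extension over the full power ring of the base product $M_1\dtimes M_2$: one must be sure that the ratio functions generated on $M_1\dtimes M_2$ are genuinely $\odot$-products of the projection images, so that locality determines the bracket consistently and the various ways of writing an element through the generators give the same value. This is where the dimensioned reformulation of the ratio-function trick from \cite{zapata2020unitfree} does the decisive work.
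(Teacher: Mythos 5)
Your proposal is correct and takes essentially the same approach as the paper: the paper likewise defines the product bracket via formula (\ref{generaldimPoissproduct}) — rendered well-defined precisely by the dimensionless hypothesis — identifies it with the three generator conditions on the images of the projections, and obtains the inclusion through the heterogeneous tensor product of Proposition \ref{DimensionedPoissonProductHetero}. The only cosmetic difference is direction: you define the bracket on the generators $\overline{P}_i^\odot\Sec{\overline{L}_i}$ and extend by the Leibniz rule, whereas the paper starts from the tensor-product formula and verifies the generator conditions, which are two readings of the same equivalence.
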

\begin{proof}
This result follows directly from Proposition \ref{ProductpolyLineBundle} by defining the product bracket via formula (\ref{generaldimPoissproduct}). It is straightforward to check that this is indeed equivalent to the usual defining conditions of product brackets by means of commuting subalgebras:
\begin{align}
    \{\overline{P}_1^\odot-,\overline{P}_1^\odot-\}^{12}_{\overline{0}} &:=\overline{P}_1^\odot\{-,-\}^1_{\overline{0}}\\ \{\overline{P}_2^\odot-,\overline{P}_2^\odot-\}^{12}_{\overline{0}} &:=\overline{P}_2^\odot\{-,-\}^2_{\overline{0}}\\ \{\overline{P}_1^\odot-,\overline{P}_2^\odot-\}^{12}_{\overline{0}} &:=0.
\end{align}
\end{proof}
It is important to emphasise that a poly-Poisson manifold $(\overline{L}_M,\{\,,\}_{\overline{0}})$ carries a great deal more information than simply its dimensionless Poisson algebra $(\Cin{M},\{\,,\}_0)$. To illustrate this consider a poly-Poisson structure on a single line bundle $L_M$, i.e. a dimensioned Poisson algebra $(\Sec{L}^\odot,\odot,\{\,,\})$ of dimension $0\in\Int$. As argued above, the dimensionless bracket gives a Poisson structure on the base manifold $(\Cin{M},\{\,,\}_0)$, however, for non-zero powers, the bracket is given by a collection differential operators between tensor powers of the line bundle. For instance, in the dimension 1 slice we have the $\Real$-bilinear form:
\begin{equation}
    \{\,,\}_1: \Sec{L}\times \Sec{L}\to \Sec{L\otimes L}
\end{equation}
which interacts with the $\Cin{M}$-module structure via the Leibniz identity
\begin{equation}
    \{s,f\cdot r\}_1=X_s(f)\otimes r + f\cdot \{s,r\}_1
\end{equation}
where the action of a section s on a function is given by the dimensioned bracket between the slices of dimension 0 and 1:
\begin{equation}
    X_s(f):=\{s,f\}\in \Sec{L}.
\end{equation}
Another manifestation of the extra data in poly-Poisson manifolds strictly beyond the Poisson manifold data appears in the product construction. Consider, again for simplicity, two poly-Poisson manifolds on single line bundles $(\Sec{L_1}^\odot,\odot,\{\,,\}^1)$ and $(\Sec{L_2}^\odot,\odot,\{\,,\}^2)$. Proposition \ref{ProductpolyPoiss} gives the product poly-Poisson structure $(\Sec{L_1 \utimes L_2}^\odot,\odot,\{\,,\}^{12})$ and, consequently, the dimensionless bracket gives a Poisson structure on the base product $M_1 \dtimes M_2$ such that the canonical projections
\begin{equation}
\begin{tikzcd}
M_1 & M_1\dtimes M_2 \arrow[l,"p_1"']\arrow[r,"p_2"] & M_2
\end{tikzcd}
\end{equation}
are Poisson maps with respect to the corresponding dimensionless Poisson brackets on each factor. Recall that Proposition \ref{BaseProductLineBundles} implies that $M_1 \dtimes M_2\ncong M_1 \times M_2$, therefore we see that the product dimensionless Poisson manifold $(\Cin{M_1\dtimes M_2}, \{\,,\}^{12}_0)$ carries strictly more information than the ordinary product of Poisson manifolds $(\Cin{M_1\times M_2}, \{\,,\}^1_0+\{\,,\}^2_0)$. These two Poisson structures can be related non-canonically by choosing local trivialisations.\newline

In \cite[Sec. 3.3]{zapata2020unitfree} unit-free Poisson manifolds were identified as a special class of Jacobi manifolds $(\Sec{L_M},\{\,,\})$ characterised by the existence of global Poisson units, i.e. non-vanishing sections $u\in\Sec{L}$ with vanishing symbol $X_u=0$ that induce ordinary Poisson structures on the base manifold $(\Cin{M},\{\,,\}_u)$. Since a global unit induces the isomorphism $\Sec{L}\cong \Cin{M}$, a Poisson unit can be equivalently characterised as a Casimir element of the power dimensioned Poisson algebra $(\Sec{L}^\odot,\odot,\{\,,\})$. This observation allows us to easily prove the following statement relating the Poisson-like structures in the unit-free context to the Poisson-like structures in the dimensioned context: a poly-Poisson structure on a single line bundle $(\Sec{L},\{\,,\})$ corresponds to the power of a unit-free Poisson if and only if
\begin{equation}
    \{-,-\}_1=u\odot \{-,-\}_0
\end{equation}
for some global unit Casimir $u\in\Sec{L}$.\newline

In light of these results, poly-Poisson manifolds appear as good candidates for a generalised category of Poisson manifolds. Indeed, it is straightforward to generalise the notions of \textbf{coisotropic calculus} to poly-Poisson manifolds and the analogues to the classic results in Poisson geometry should follow easily. Importantly, however, similar subtleties and limitations appear when one seeks for strict categorical structures in the class of poly-Poisson category. See Section \ref{commentary} for more comments on this.\newline

Going beyond poly-Poisson manifolds, we can see that the product Poisson bracket formula (\ref{generaldimPoissproduct}) may be slightly modified -- supplying some additional information -- to give a well-defined product of two general poly-Jacobi manifolds. To see this, consider two general poly-Jacobi manifolds $(\Sec{\overline{L}},\{\,,\}_k)$ and $(\Sec{\overline{L}'},\{\,,\}_q)$, and note that the dimensions of each term of (\ref{generaldimPoissproduct}) are:
\begin{equation}
    (n_1+m_1+k,n_2+m_2),\qquad (n_1+m_1,n_2+m_2+q)
\end{equation}
where $n_i\in\Int^n$ and $m_i\in\Int^m$ are the dimensions of the generic arguments and $k\in\Int^n$ and $q\in\Int^m$ are the dimensions of the brackets. The idea is to compensate both terms so that they match and addition is well-defined. Since we have already used $\odot$-multiplication in the defining formula of the product, a simple way to do this is by $\odot$-multiplication of elements of dimension $-k$ and $-q$ in each bracket component. Indeed, consider $u_1\in\Sec{\overline{L}_1}^k$ and $u_2\in\Sec{\overline{L}_2}^q$, then the terms of the modified expression
\begin{equation}\label{moddimPoissproduct}
    \{s_1\otimes s_2,r_1\otimes r_2\}:=u_1\odot\{s_1,r_1\}_k\otimes s_2 \odot r_2 + s_1 \odot r_1 \otimes u_2\odot\{s_2,r_2\}_q
\end{equation}
have matching dimension and thus the bracket is well-defined. As a dimensioned multiplication on the tensor product of dimensioned modules, this is a $\Real$-bilinear antisymmetric bracket with dimension $(\overline{0},\overline{0})\in\Int^n\times\Int^m$. This bracket acts as a $\odot$-derivation by construction but the Jacobi identity, however, is obstructed by the behaviour of the chosen elements $u_1$ and $u_2$ within their respective dimensioned Lie algebras. The following theorem shows that the existence of general poly-Jacobi products depends on the existence of Casimir elements.

\begin{thm}[Product of poly-Jacobi Manifolds]\label{ProductpolyJacobi}
Let two poly-Jacobi manifolds $(\Sec{\overline{L}_1},\{\,,\}^1_{\overline{k}})$ and $(\Sec{\overline{L}_2},\{\,,\}^2_{\overline{q}})$ each with a choice of Casimir $u_1\in\Sec{\overline{L}_1}^{\overline{k}}$ and $u_2\in\Sec{\overline{L}_2}^{\overline{q}}$, then there exists a unique poly-Jacobi structure on the poly-line bundle product $(\Sec{\overline{L}_1 \utimes \overline{L}_2},\{\,,\}^{12}_{\overline{0}})$ such that the power maps of the canonical projections
\begin{equation}
\begin{tikzcd}
\Sec{\overline{L}_1} \arrow[r,"\overline{P}_1^\odot"] & \Sec{\overline{L}_1 \utimes \overline{L}_2} & \Sec{\overline{L}_2} \arrow[l,"\overline{P}_2^\odot"']
\end{tikzcd}
\end{equation}
are dimensioned Poisson algebra morphisms that induce an inclusion of dimensioned Poisson algebras
\begin{equation}
    \Sec{{\overline{L}}_1} \otimes_\Real \Sec{{\overline{L}}_2} \hookrightarrow \Sec{\overline{L}_1 \utimes \overline{L}_2}
\end{equation}
where the tensor product is defined with the bracket (\ref{moddimPoissproduct}).
\end{thm}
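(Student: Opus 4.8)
The plan is to reduce the statement to the heterogeneous product of dimensioned Poisson algebras (Proposition \ref{DimensionedPoissonProductHetero}) by first \emph{rectifying} each factor bracket into a genuine dimensioned Poisson bracket of dimension $\overline{0}$, and then to extend the resulting structure from the algebraic tensor product to the whole power ring $\Sec{\overline{L}_1 \utimes \overline{L}_2}$ exactly as in the proof of Theorem \ref{ProductJacobi2}. The Casimirs $u_1,u_2$ will play a double role: they supply the dimension shift that homogenises the two terms of (\ref{moddimPoissproduct}), and, crucially, they are what allow the Jacobi identity to survive the rectification.

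First I would introduce on each factor the rectified bracket
\begin{equation}
    [a,b]_1 := u_1 \odot \{a,b\}^1_{\overline{k}}, \qquad [c,d]_2 := u_2 \odot \{c,d\}^2_{\overline{q}},
\end{equation}
for $a,b\in\Sec{\overline{L}_1}$ and $c,d\in\Sec{\overline{L}_2}$. The dimension bookkeeping carried out just before (\ref{moddimPoissproduct}) already shows that $u_1$ carries exactly the dimension offsetting $\overline{k}$ (and $u_2$ that offsetting $\overline{q}$), so each rectified bracket is a homogeneous dimensioned multiplication of dimension $\overline{0}$. Antisymmetry is inherited from $\{\,,\}^i$, and the $\odot$-Leibniz rule follows from the Leibniz rule of the original bracket together with the commutativity and associativity of $\odot$, since $u_i$ simply factors through the products. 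Thus each $(\Sec{\overline{L}_i},\odot,[\,,]_i)$ is a candidate dimensioned Poisson algebra of dimension $\overline{0}$ over $\Real$, the only nontrivial point being the Jacobi identity.

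The crux of the argument, and the step where the Casimir hypothesis is indispensable, is precisely this Jacobi identity. Expanding $[[a,b]_1,c]_1 = u_1\odot\{u_1\odot\{a,b\}^1_{\overline{k}},c\}^1_{\overline{k}}$ by the Leibniz rule of $\{\,,\}^1_{\overline{k}}$ produces a term proportional to $\{u_1,c\}^1_{\overline{k}}$, which \emph{vanishes precisely because $u_1$ is central}, leaving $u_1\odot u_1 \odot \{\{a,b\}^1_{\overline{k}},c\}^1_{\overline{k}}$. Summing cyclically, the factor $u_1\odot u_1$ pulls out and the remaining cyclic sum is annihilated by the Jacobi identity of $\{\,,\}^1_{\overline{k}}$; the identical computation applies to $[\,,]_2$. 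Were $u_1$ not a Casimir, the surviving term would obstruct the identity, which is exactly why generic poly-Jacobi manifolds need not admit products. With the Casimir in hand, each factor is a bona fide dimensioned Poisson algebra of dimension $\overline{0}$.

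Finally, since both factors are now honest dimensioned Poisson algebras of dimension $\overline{0}$ over the common dimensionless ring $\Real$, Proposition \ref{DimensionedPoissonProductHetero} furnishes a dimensioned Poisson structure on $\Sec{\overline{L}_1}\otimes_\Real\Sec{\overline{L}_2}$ whose bracket is, on pure tensors, exactly (\ref{moddimPoissproduct}); this already yields the claimed inclusion. To reach the full power ring I would invoke Proposition \ref{PowerpolyLineBundleProduct} together with the fact (established for the ratio sections in the proof of Theorem \ref{ProductJacobi2}) that $\Sec{\overline{L}_1\utimes\overline{L}_2}$ is, locally, $\odot$-generated by the two commuting subalgebras $\overline{P}_1^\odot\Sec{\overline{L}_1}$ and $\overline{P}_2^\odot\Sec{\overline{L}_2}$. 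Defining the product bracket by $\overline{P}_i^\odot[\,,]_i$ on each factor and by zero across them, and extending by $\odot$-biderivation, determines it uniquely; antisymmetry and Jacobi then propagate from the generators to the whole ring because the Jacobiator is a $\odot$-triderivation vanishing on generators, as in Theorem \ref{ProductJacobi2}. The main obstacle is thus isolated in the Casimir-dependent Jacobi computation above, the gluing to the full ring being routine given the locality argument already in place.
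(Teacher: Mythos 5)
Your proof is correct, and the mechanism at its heart -- the Leibniz expansion producing terms $\{u_1,r\}^1_{\overline{k}}$, $\{u_2,r\}^2_{\overline{q}}$ that vanish by centrality -- is exactly the one the paper uses; but you package it differently. The paper takes formula (\ref{moddimPoissproduct}) as the definition of the product bracket, observes (as you do) that $\Real$-linearity and the derivation property are untouched because the Casimirs enter by $\odot$-multiplication, and then verifies the Jacobi identity of the product bracket directly, treating the whole theorem as Proposition \ref{ProductpolyPoiss} with Casimir insertions. You instead factor the argument through a rectification lemma the paper never states: $[\,,]_i := u_i\odot\{\,,\}_i$ is a dimensioned Poisson bracket of dimension $\overline{0}$, with the Casimir hypothesis consumed entirely in showing that $u_i\odot u_i$ pulls out of the cyclic Jacobiator; each factor thereby becomes a poly-Poisson manifold and the theorem reduces to the already-established tensor product of Proposition \ref{DimensionedPoissonProductHetero}, plus the same extension-to-the-full-power-ring argument both proofs share. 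Your route is more modular: it isolates precisely where centrality is needed, it makes the uniqueness claim explicit (determination on the two commuting generating subalgebras plus extension by $\odot$-biderivation), it explains structurally why the product bracket must land at dimension $\overline{0}$ -- i.e.\ why, as the paper's Commentary notes, products of poly-Jacobi manifolds are poly-Poisson -- and it dovetails with the paper's own characterisation of unit-free Poisson structures via $\{-,-\}_1=u\odot\{-,-\}_0$. What the paper's direct verification buys is brevity and keeping the Casimirs visible in the final bracket rather than absorbed into rectified factors. One small caution: your rectified bracket has dimension $\overline{0}$ only if $u_i$ lies in the slice of dimension $-\overline{k}$ (resp.\ $-\overline{q}$), which is what the paper's compensation discussion intends despite its superscript notation $\Sec{\overline{L}_1}^{\overline{k}}$; you handle this correctly by speaking of the \emph{offsetting} dimension, but the sign deserves to be stated explicitly.
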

\begin{proof}
This construction is identical to the product of poly-Poisson manifolds of Proposition \ref{ProductpolyPoiss} except for the presence of the Casimir elements in the product bracket. Since the Casimir elements enter the bracket by $\odot$-multiplication, the $\Real$-linearity and derivation properties are not affected. In showing the Jacobi identity, however, we find terms involving $\odot$-linear combinations with elements of the form
\begin{equation}
    \{u_1,r\}^1_{\overline{k}}\qquad \{u_2,r\}^2_{\overline{q}}.
\end{equation}
Since $u_1$ and $u_2$ are Casimir elements, i.e. elements of the center of their respective dimensioned Lie algebras, these terms vanish and we thus recover the usual expression for the Jacobi identity of the tensor product of Poisson brackets.
\end{proof}

Note that this product of poly-Jacobi manifolds generalises the product of poly-Poisson manifolds of Proposition \ref{ProductpolyPoiss} since for a pair of dimensionless brackets there is always a canonical choice of dimensionless Casimir elements given by the constant functions $1\in\Cin{M_1}$ and $1\in\Cin{M_2}$.

\section{Commentary} \label{commentary}

\subsection{Product Subtleties}

The product constructions given in Propositions \ref{ProductJacobi1}, \ref{ProductJacobi2} and \ref{ProductpolyJacobi} are all generalisations of the ordinary product of Poisson manifolds and, as such, they involve similar subtleties \cite{weinstein2009symplectic}; in particular, geometric products of Poisson-like structures cannot be promoted to full categorical products. When we consider dimensioned Poisson structures in the poly-line bundle category these issues appear in conjunction with the more explicit obstruction of the existence of Casimirs. Even if we decide to restrict to a subcategory of poly-Jacobi manifolds admitting (global) Casimirs, the product construction doesn't capture the full generality of the possible dimensioned brackets since the resulting bracket of the product of two arbitrary poly-Jacobi manifolds has dimension $0$, i.e. it is a poly-Poisson manifold. We thus see that, even under the necessary technical assumptions, products of general poly-Jacobi manifolds result in poly-Poisson manifolds.

\subsection{Poisson Flavour vs Jacobi Flavour}

Our comments on the product subtleties above are just one aspect of the broader Poisson-vs-Jacobi theme that was already introduced in Section \ref{GeneralisingJacobi} when discussing the ways in which Jacobi manifolds can be generalised. Proposition \ref{PowerFunctorJacobi} shows that Jacobi manifolds can be algebraically characterised as the dimensioned analogue of Poisson algebras, so one would expect that by identifying the right geometric category to support them (poly-line bundles) Poisson and Jacobi manifolds will be recovered on equal footing. This is only true at a very coarse degree of generality -- indeed, both Poisson and Jacobi manifolds are examples of poly-Jacobi manifolds -- however, at the level of dimensions, Poisson and Jacobi brackets are always separate, i.e. Poisson brackets are of dimension $0$ and Jacobi brackets are of dimension $-1$. This separation results in their respective product constructions being fundamentally distinct at the algebraic level. This is understood from the fact that there are two distinct notions of tensor product of dimensioned Poisson algebras, which we proved in Section \ref{dimPoisson}. This is perhaps further evidence supporting the view that the Poisson-vs-Jacobi distinction is essential -- not merely an artifact of the traditional formalisms -- and somewhat analogous to the general ring-vs-module distinction in commutative algebra.

\subsection{Dimensioned Geometry}

In Section \ref{polyLine} we defined the category of poly-line bundles as a dimensioned generalisation of the category of ordinary smooth manifolds, an interpretation that is supported by the several structural results regarding submanifolds, products and the power functor. Alternatively, however, we could take the dimensioned formalism introduced in \cite{zapata2021dimensioned} and develop the theory of dimensioned differential geometry from first principles. The key idea to do so is to replace any instance of the basic field $\Real$ by some dimensioned field $F_G$; in this manner, a dimensioned manifold would be a topological space that is locally modelled by a dimensioned vector space over the dimensioned field $F_G$. Using the obvious definitions suggested by this approach, it is easy to see that poly-line bundles can be recovered as dimensioned manifolds whose base dimensioned field is $\Real\times \Int^k$. This suggests the interesting question of what geometry would look like if dimensioned rings with dimensioned sets other than $\Int^k$ are used in the constructions of dimensioned manifolds.

\subsection{Marrying Dimensional Analysis with Geometric Mechanics}

Recall that beyond the purely mathematical aspects of Jacobi geometry, we further motivated tour research with the following question: \emph{Is it possible to formulate a theory of Hamiltonian mechanics where observables carry the algebraic structure of the standard dimensional analysis of physical quantities?} We are now in the position to offer a definitive positive answer: a physical theory that poses poly-Jacobi manifolds as phase spaces will recover all the desirable notions of classical Hamiltonian mechanics (derivations as dynamics, reductions, products) and the observable assignment (the power functor for poly-line bundles) will indeed map spaces of physical states into algebraic structures that manifestly display the properties of the dimensional analysis of physical quantities. Based on our results in Section \ref{polyJacobiMan} it may be appropriate to restrict to poly-Poisson manifolds as the dimensioned generalisation of the phase spaces of classical Hamiltonian mechanics since they explicitly recover Poisson structures on their dimensionless slices.

\subsection{Topological Metrology?}

All our treatment of poly-line bundles and poly-Jacobi manifolds has emphasised a trivialisation-independent approach; indeed, the (global) trivialisability condition for line bundles has not entered the discussion as it was not necessary in any of the proofs or constructions. Within the `dimensioned phase space' framework described in the section above, the question of trivialisability is directly related with the choice of units for a set of physical dimensions. Units are only defined locally in general, since there may be topological obstructions to the existence of global non-vanishing sections of line bundles, hence implying an explicit interaction between the topology of a phase space and the units of measurement that can be defined on it. This presents a plausible route to describing effects where observables (at the algebraic level) are sensitive to the structure of phase space (at the topological level), an effect that is well-documented for quantum systems but that has never been identified in the context of classical mechanics. More broadly, these considerations open up a path towards `topological metrology', where the structural features of phase spaces directly affect the possible metrological paradigms that can be developed. An interesting question is whether there is phenomena observed in the natural sciences that would be effectively described by this sort of `classical topological metrology'.

\section*{Acknowledgements}

I would like to thank Jos\'e Figueroa-O'Farrill for all the helpful conversations and the generous lunches in the unlikely sun of Edinburgh.

\printbibliography

\appendix

\section{Calculations for the Proof of Theorem \ref{DimPoissonAlgebraJacobi}} \label{CalculationsDimensionedPoissonBracket}

Let $\lambda: L\to M$ be a line bundle with a Jacobi structure $(\Sec{L},\{,\})$, the subindex on the dimensioned Poisson bracket $(\Sec{L}^\odot,\odot,\{,\}_{-1})$ will be omitted for simplicity. In what follows we take $a,b,c\in\Sec{L^1}=\Sec{L}$, $f,g,h\in\Sec{L^0}\cong \Cin{M}$ and $\alpha\in\Sec{L^{-1}}=\Sec{L^*}$.\newline

Consider the bracket $\{f\odot a,\alpha \odot b\}$, expanding as $\odot$-derivations we find
\begin{align}
    \{f\odot a,\alpha \odot b\} &= a \odot \{f,\alpha\}\odot b + f\odot \{a,\alpha\}\odot b + f\odot \{a,b\}\odot \alpha + a\odot \{f,b\}\odot \alpha=\\
    &= \{f,\alpha\}(a,b) + fX_a[\alpha(b)]-f\alpha(\{a,b\})+f\alpha(\{a,b\})-X_b[f]\alpha(a).
\end{align}
But from the definition of the $\odot$ dimensioned multiplication and the symbol-squiggle identity we have:
\begin{align}
    \{f\odot a,\alpha \odot b\} &= \{f\cdot a,\alpha(b)\}=X_{f\cdot a}[\alpha(b)]\\
    &= fX_a[\alpha(b)]+\Lambda^\sharp(df \otimes a)[\alpha(b)],
\end{align}
thus giving the desired bracket after simplifications:
\begin{equation}
    \{f,\alpha\}=\Lambda^\sharp(df \otimes a)[\alpha(b)] + X_b[f]\alpha(a).
\end{equation}

Consider the bracket $\{\alpha\odot a,\beta \odot b\}$, expanding as $\odot$-derivations we find
\begin{align}
    \{\alpha\odot a,\beta \odot b\}(c) &= a \odot \{\alpha,\beta\}\odot b \odot c + \alpha\odot \{a,\beta\}\odot b \odot c \\
    &+ \alpha\odot \{a,b\}\odot \beta \odot c + a\odot \{\alpha,b\}\odot \beta \odot c\\
    &= \{\alpha,\beta\}(a,b,c) +\alpha(b)\Delta_a(\beta)(c) +\alpha(\{a,b\})\beta(c)-\beta(a)\Delta_b(\alpha)(c)\\
    &= \{\alpha,\beta\}(a,b,c) - X_b[\alpha(a)]\beta(c)+\alpha(b)X_a[\beta(c)]-\alpha(b)\beta(\{a,c\}).
\end{align}
But from the definition of the $\odot$ dimensioned multiplication and the definition of the bracket $\{f,g\}$ we have:
\begin{equation}
    \{\alpha\odot a,\beta \odot b\}(c)=\{\alpha (a),\beta(b)\}(c)=\Lambda^\sharp(d\alpha(a)\otimes c)[\beta(b)],
\end{equation}
thus giving the desired bracket after simplifications:
\begin{equation}
     \{\alpha,\beta\}(a,b,c) = \Lambda^\sharp(d\alpha(a)\otimes c)[\beta(b)] + X_b[\alpha(a)]\beta(c)-\alpha(b)X_a[\beta(c)]+\alpha(b)\beta(\{a,c\}).
\end{equation}

We explicitly show that the dimensioned Poisson bracket satisfies the Jacobi identity for all combinations of elements of dimension $+1$ and $0$, which, as argued in the proof of theorem \ref{DimPoissonAlgebraJacobi}, is enough to guarantee the Jacobi identity for all the other dimensions via extension as $\odot$-derivations. Firstly,
\begin{equation}
    \{a,\{b,c\}\}=\{b,\{a,c\}\}+\{\{a,b\},c\}
\end{equation}
follows from the Jacobi identity to the Jacobi bracket $(\Sec{L},\{,\})$ itself. Now 
\begin{align}
    \{a,\{b,f\}\} = \{a,X_b[f]\} = X_a[X_b[f]] &= X_b[X_a[f]] + [X_a,X_b][f] \\
    &= X_b[\{a,f\}] +X_{\{a,b\}}[f]\\
    &=\{b,\{a,f\}\}+\{\{a,b\},f\},
\end{align}
where we have used the fact that the symbol map $X$ is a Lie algebra morphism. Also
\begin{align}
    \{a,\{f,g\}\}(b) = \Delta_a(\{f,g\})(b) &= X_a[\{f,g\}(b)]-\{f,g\}(\{a,b\})\\
    &= X_a[\Lambda^\sharp(df\otimes b)[g]]- \Lambda^\sharp(df\otimes \{a,b\})[g]\\
    &=\Lambda^\sharp(df\otimes b)[X_a[g]] + [X_a,\Lambda^\sharp(df\otimes b)][g] - \Lambda^\sharp(df\otimes \{a,b\})[g]\\
    &=\Lambda^\sharp(df\otimes b)[X_a[g]] + \Lambda^\sharp(dX_a[f]\otimes b)[g]\\
    &+\Lambda^\sharp(df\otimes \{a,b\})[g] - \Lambda^\sharp(df\otimes \{a,b\})[g]\\
    &=\Lambda^\sharp(df\otimes b)[\{a,g\}] + \Lambda^\sharp(d\{a,f\}\otimes b)[g]=\\
    &=\{f,\{a,g\}\}(b)+\{\{a,f\},g\}(b)
\end{align}
where the symbol compatibility condition 3 of proposition \ref{JacobiSymbol} has been used. Lastly
\begin{align}
    \{f,\{g,h\}\}(a,b,c) &= \Lambda(df\otimes a, \Lambda(dg \otimes b , dh\otimes c)) + X_b[f]\cdot \Lambda(dg\otimes a, dh\otimes c)\\
    &= \Lambda(dg\otimes a, \Lambda(df \otimes b , dh\otimes c)) + X_b[g]\cdot \Lambda(df\otimes a, dh\otimes c)\\
    & - \Lambda(dh\otimes a, \Lambda(df \otimes b , dg\otimes c)) - X_b[h]\cdot \Lambda(df\otimes a, dg\otimes c)\\
    &=\Lambda(dg\otimes a, \Lambda(df \otimes b , dh\otimes c)) + X_b[g]\cdot \Lambda(df\otimes a, dh\otimes c)\\
    &+ \Lambda(\Lambda(df \otimes b , dg\otimes c), dh\otimes a) + X_b[h]\cdot \Lambda(dg\otimes a, df\otimes c)\\
    &=\{g,\{f,h\}\}(a,b,c) + \{\{f,g\},h\}(a,b,c)
\end{align}
where the symbol compatibility condition 4 of proposition \ref{JacobiSymbol} has been used.

\end{document}